\renewcommand{\geq}{\geqslant}
\renewcommand{\leq}{\leqslant}
\author{{\bf Robert C. Dalang$^*$}  and {\bf Fei Pu\footnote{This paper is based on F. Pu's Ph.D. thesis, written under the supervision of R. C. Dalang. Research
partially supported by the Swiss National Foundation for Scientific
Research.}}\\
\\
\it\small \'Ecole Polytechnique F\'ed\'erale de Lausanne \\
}
\title{\bf\Large{On the density of the supremum of the solution to the linear stochastic heat equation}}
\date{}
\newtheorem{theorem}{Theorem}[section]
\newtheorem{cor}[theorem]{Corollary}
\newtheorem{lemma}[theorem]{Lemma}
\newtheorem{prop}[theorem]{Proposition}
\newtheorem{remark}[theorem]{Remark}%[section]
\numberwithin{equation}{section}
\begin{document}%\onehalfspacing
\maketitle
\begin{abstract}%\\
We study the regularity of the probability density function of the supremum of the solution to the linear stochastic heat equation. Using a general criterion for the smoothness of densities for locally nondegenerate random variables, we establish the smoothness of the joint density of the random vector whose components are the solution and the supremum of an increment in time of the solution over an interval (at a fixed spatial position), and the smoothness of the density of the supremum of the solution over a space-time rectangle that touches the $t = 0$ axis. Applying the properties of the divergence operator, we establish a Gaussian-type upper bound on these two densities respectively, which presents a close connection with the H\"{o}lder-continuity properties of the solution.
\end{abstract}
\vskip.5in %\newpage

\noindent{\it \noindent MSC 2010 subject classification:}
Primary 60H15, 60J45; Secondary: 60H07, 60G60.\\

	\noindent{\it Keywords:}
	Stochastic heat equation, supremum of a Gaussian random field, probability density function, Gaussian-type upper bound, Malliavin calculus.\\

\noindent{\it Abbreviated title: Density of supremum for linear SPDE}

\section{Introduction and main results} \label{section4.1}

For a real-valued Gaussian random field $\{X(t): t \in I\}$, where $I$ is a parameter set, the distribution function of the supremum of this random field, or the excursion probability $\mbox{P}\{\sup_{t \in I}X(t) \geq a\}$, has been investigated extensively; see, for example, \cite{Adl90, AdT07, Pit96} and references therein.
In general, finding a formula for the distribution function of the supremum of a stochastic process is an almost impossible task, let alone for its probability density function, which is much less studied than the probability distribution function.

 The question of smoothness of the density of the supremum of a multiparameter Gaussian process dates back to the work of Florit and Nualart \cite{FlN95}, in which they establish a general criterion (see Theorem \ref{th2017-08-24-2}) for the smoothness of the density, assuming that the random vector is locally in $\mathbb{D}^{\infty}$, and apply it to show that the maximum of the Brownian sheet on a rectangle possesses an infinitely differentiable density. Moreover, this method was applied to prove that the supremum of a fractional Brownian motion has an infinitely differentiable density; see Lanjri Zadi and Nualart \cite{LaN03}. Nourdin and Viens \cite{NoV09} use the Malliavin calculus to obtain a formula for the density of the law of any random variable which is measurable and differentiable with respect to a given isonormal Gaussian process and they apply this result to study the density of supremum (after centering) of a Gaussian process whose covariance function is bounded below and above by positive constants; see also \cite[Section 5.5]{Nua18} for a discussion on this density formula.
Some general results on the regularity of the density of the maximum of Gaussian random fields have been developed by Cirel'son \cite{Cir75}, Pitt and Lanh \cite{PiL79}, Weber \cite{Web85}, Lifshits \cite{Lif86, Lif95}, Diebolt and Posse \cite{DiP96} and Aza\"{\i}s and Wschebor \cite{AzW09}.
We also refer to Hayashi and Kohatsu-Higa \cite{HaK13} and Nakatsu \cite{Nak16} for the smoothness of densities for the supremum of diffusion processes.

We are interested in the properties of the probability density function of the supremum of the solutions to SPDEs. This is partly motivated by the fact that the density of the supremum  of the solution is related to the study of upper bounds on hitting probabilities for these solutions; see \cite[Section 4.1]{Pu18} for detailed explanation.

We consider the linear stochastic heat equation
 \begin{align}\label{eq2017-10-11-62}
 \frac{\partial u}{\partial t}(t, x) = \frac{\partial^2u}{\partial x^2}(t, x) + \dot{W}(t, x),
 \end{align}
 for $t \in [0, \infty[$ and $x \in [0, 1]$, with initial condition $u(0, x) = 0$ for all $x \in [0, 1]$, and either Neumann or Dirichlet boundary conditions. Here, $W = \{W(t, x): (t, x) \in [0, \infty[ \times [0, 1]\}$ denotes Brownian sheet defined on a probability space $(\Omega, \mathscr{F}, \mbox{P})$.

By definition \cite{Wal86}, the mild solution is
  \begin{align}\label{eq2}
  u(t, x) = \int_0^t\int_0^1 G(t - r, x, v) W(dr, dv),
 \end{align}
 where $G(t, x, y)$ is the Green kernel for the heat equation with Neumann or Dirichlet boundary conditions; see  \cite{BMS95}.

We assume that the process $\{u(t, x): (t, x) \in [0, T] \times [0, 1]\}$ given by (\ref{eq2}) is the jointly continuous version, which is almost $\frac{1}{4}$-H\"{o}lder continuous in time and almost $\frac{1}{2}$-H\"{o}lder continuous in space. In fact, for any $p \geq 1, (t, x), (s, y) \in [0, T] \times [0, 1]$, there exists a constant $C = C(p, T)$ such that
\begin{align}\label{eq2016-06-14-1}
\mbox{E}[|u(t, x) - u(s, y)|^p] \leq C (|t - s|^{1/2} + |x - y|)^{p/2}.
\end{align}

Choose two non-trivial compact intervals $I \subset [0, T]$ and $J \subset [0, 1]$. In the case of Dirichlet boundary conditions, we assume that $J \subset \, ]0, 1[$.  Choose $\delta_1 > 0$ and $(s_0, y_0) \in I \times J$. For $t\in [0, T]$, we denote
\begin{align} \label{eq2018-03-01-600}
\bar{u}(t, y_0) =  u(t, y_0) - u(s_0, y_0).
\end{align}
Set
\begin{align} \label{eq2017-12-19-2}
F_1 = u(s_0, y_0), \quad F_2 = \sup_{
 t \in [s_0, s_0 + \delta_1]
}\bar{u}(t, y_0) \quad \mbox{and}\quad F = (F_1, F_2).
 \end{align}
Choose $\delta_2 > 0$ such that $[y_0, y_0 + \delta_2] \subset [0, 1]$; in the case of Dirichlet boundary conditions, we assume that $[y_0, y_0 + \delta_2]  \subset \, ]0, 1[$ (open interval). Denote by $M_0$ the global supremum of $u$ over $[0, \delta_1] \times [y_0, y_0 + \delta_2]$:
%\begin{align} \label{eq2018-01-16-1}
%\check{u}(t, x) = u(t, x) - u(t, y_0)
%\end{align}
%and
\begin{align}\label{eq2018-01-02-4}
M_0 = \sup_{
 (t,x) \in [0, \delta_1] \times [y_0, y_0 + \delta_2]
 }u(t, x).
\end{align}

Our goal is to give estimates on the joint probability density function of $F$, and on the probability density function of $M_0$. At the same time, we also want to determine if these two probability density functions are infinitely differentiable. Malliavin calculus is a tool to study the smoothness of random variables (see \cite[Proposition 2.1.5]{Nua06} or \cite[Theorem 5.2]{San05}). It is clear that the first component of $F$ belongs to $\mathbb{D}^{\infty}$. We will show that $F_2$ belongs to $\mathbb{D}^{1, 2}$ in Lemma \ref{lemma9}. However, we do not expect that $F_2$ belongs to $\mathbb{D}^{\infty}$. The same problem arises for $M_0$. This means we can not apply standard results for {\em nondegenerate} random variables.

Florit and Nualart \cite{FlN95} established a general criterion (see Theorem \ref{th2017-08-24-2} below) for smoothness of the density assuming only that the components of the random vector belong to $\mathbb{D}^{1, 2}$. We will make use of this result to prove the smoothness of the densities of the random variables $F$ and $M_0$.

We first state our results on the smoothness of the densities of these random variables.
\begin{theorem}\label{the2017-11-25-1}
\begin{itemize}
  \item [(a)]For all $(s_0, y_0) \in \, ]0, T] \times J$ and $\delta_1 > 0$, the random vector $F$  has an infinitely differentiable density on $\mathbb{R} \times ]0, \infty[$ and if $y_0 \in \, ]0, 1[$, then $F_2 > 0$ a.s.  When $s_0 = 0$, $F_1$ vanishes identically but $F_2$ takes values in $]0, \infty[$ a.s. and has an infinitely differentiable density on $]0, \infty[$.
     \item [(b)]For all $y_0 \in [0, 1]$, $\delta_1 > 0$ and $\delta_2 > 0$ with $[y_0, y_0 + \delta_2] \subset [0, 1]$ ($[y_0, y_0 + \delta_2] \subset \, ]0, 1[$ in the case of Dirichlet boundary conditions), the random variable $M_0$ takes values in $]0, \infty[$ and has an infinitely differentiable density on $]0, \infty[$.
  \end{itemize}
\end{theorem}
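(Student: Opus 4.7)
The plan is to apply the Florit--Nualart criterion (Theorem \ref{th2017-08-24-2}) to $F$ in part (a) and to $M_0$ in part (b). The criterion requires (i) $\mathbb{D}^{1,2}$-regularity of the components, which is Lemma \ref{lemma9} for $F_2$ together with an entirely analogous statement for $M_0$, and (ii) for each localization event of the form $\{F_2 > \varepsilon\}$ or $\{M_0 > \varepsilon\}$, the existence of a sequence of approximating random vectors in $\mathbb{D}^\infty$ whose Malliavin matrices are nondegenerate in a sufficiently strong quantitative sense.

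Before invoking the criterion, I would settle the positivity statements. For part (a) with $y_0 \in \,]0, 1[$, the process $\{\bar{u}(t, y_0): t \geq s_0\}$ is continuous and centered Gaussian with $\bar{u}(s_0, y_0) = 0$ and strictly positive variance for $t > s_0$; a standard symmetry-plus-zero-one-law argument then yields $F_2 > 0$ a.s. The same reasoning applied at any interior point $(t^*, x^*) \in \,]0, \delta_1] \times \,]y_0, y_0 + \delta_2[$ yields $M_0 > 0$ a.s. For part (a) with $s_0 = 0$, $F_1 \equiv 0$ and the argument for $F_2$ reduces to a purely one-dimensional version of the part (b) construction.

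For the smoothness in (a), I would choose a sequence of dyadic partitions $\pi_n = \{t_0^n, \ldots, t_{k_n}^n\}$ of $[s_0, s_0 + \delta_1]$ with vanishing mesh, set $F_2^n = \max_{0 \leq i \leq k_n} \bar{u}(t_i^n, y_0)$, and smooth the maximum via $\Phi_\eta \in C^\infty(\R^{k_n+1})$ converging to $\max$ as $\eta \to 0$, producing $F_2^{n, \eta} := \Phi_\eta(\bar{u}(t_0^n, y_0), \ldots, \bar{u}(t_{k_n}^n, y_0)) \in \mathbb{D}^\infty$. Using $Du(t, y_0)(r, v) = G(t - r, y_0, v)\1_{[0, t]}(r)$ and the chain rule, on the event where the gradient of $\Phi_\eta$ concentrates at index $i$ the Malliavin matrix of $(F_1, F_2^{n, \eta})$ is close to the covariance matrix of $(u(s_0, y_0), \bar{u}(t_i^n, y_0))$, whose determinant equals the Gram determinant $\E[u(s_0, y_0)^2]\,\E[u(t_i^n, y_0)^2] - \E[u(s_0, y_0)\,u(t_i^n, y_0)]^2$, strictly positive for $t_i^n > s_0$. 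Part (b) proceeds analogously over a two-dimensional grid $\{(t_i^n, x_j^n)\}$, the relevant one-dimensional Malliavin variance being $\|Du(t_i^n, x_j^n)\|_H^2 = \E[u(t_i^n, x_j^n)^2]$, strictly positive as soon as $t_i^n > 0$.

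The main obstacle, in both (a) and (b), will be turning this pointwise positivity of the Malliavin determinant into the \emph{uniform} moment control of its inverse required by the criterion. A priori, the (grid) argmax $\tau^*$ can approach $s_0$ (respectively $0$) arbitrarily closely, driving the relevant determinant down; but on the event $\{F_2 > \varepsilon\}$ the path must have risen to height $\varepsilon$ by time $\tau^*$, and combining this with the H\"{o}lder bound \eqref{eq2016-06-14-1} yields quantitative tail estimates on $\tau^* - s_0$ (respectively on the time coordinate of the argmax for $M_0$). These bounds, together with explicit lower estimates on the Green kernel integrals $\int_0^t \int_0^1 G(t - r, x, v)^2\, dv\, dr$, supply the moments of the inverse Malliavin matrix needed to close the argument.
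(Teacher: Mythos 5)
Your restatement of the Florit--Nualart criterion is not what Theorem \ref{th2017-08-24-2} actually requires, and this is where the proof breaks. The criterion does not ask for ``a sequence of approximating random vectors in $\mathbb{D}^\infty$ with nondegenerate Malliavin matrices''; it asks for \emph{fixed} auxiliary objects $u_A^j \in \mathbb{D}^\infty(\mathscr{H})$ and $\gamma_A^{i,j} \in \mathbb{D}^\infty$ with $|\det\gamma_A|^{-1} \in L^p(\Omega)$ for all $p$, such that the exact identity $\langle DF^i, u_A^j\rangle_{\mathscr{H}} = \gamma_A^{i,j}$ holds on $\{F \in A\}$ for the \emph{original} vector $F$. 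Your smoothed discretized maxima $F_2^{n,\eta}$ do lie in $\mathbb{D}^\infty$, but substituting them for $F_2$ is not permitted by the criterion: to recover smoothness of the density of $F$ from that of $(F_1,F_2^{n,\eta})$ you would need uniform (in $n$ and $\eta$) control of all the Malliavin norms entering the integration-by-parts formulas, and this is exactly what fails --- the higher derivatives of $\Phi_\eta$ concentrate where two grid values tie and blow up as $\eta \to 0$, which is the very reason $F_2$ is not expected to belong to $\mathbb{D}^\infty$. The missing idea is the paper's construction of the auxiliary objects: one introduces $\mathbb{D}^\infty$ functionals $Y_r$ (integrated H\"older-type seminorms of the increments of $u$, \eqref{eq2017-09-28-1}), shows via Garsia--Rodemich--Rumsey that $Y_r \leq R$ forces $\sup_{t\in[s_0,r]}|\bar u(t,y_0)| \leq a$ (Lemma \ref{lemma11}), and sets $u_A^2 = (\partial_r - \partial_v^2)H$ with $H(r,v) = \phi_{\delta_1}(v)\int_{s_0}^r\psi(Y_a)\,da$, where $\psi$ is a smooth cutoff at level $R$. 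Uniqueness for the heat equation (Lemma \ref{lemma2017-11-27-1}) then gives $\langle DF_2, u_A^2\rangle_{\mathscr{H}} = H(S,y_0) = \int_{s_0}^S\psi(Y_r)\,dr$, and on $\{F_2 > a\}$ the cutoff annihilates the integrand for $r > S$, so this coincides with the $\mathbb{D}^\infty$ variable $\gamma_A^{2,2} = \int_{s_0}^{s_0+\delta_1}\psi(Y_r)\,dr$. All higher-order smoothness is thereby carried by $Y_r$ and $\psi$, never by $F_2$. Your observation that $\{F_2 > \varepsilon\}$ constrains the location of the argmax is the correct intuition behind this localization, but as stated it does not produce the required $u_A^j$ and $\gamma_A$, nor the $L^p$ bounds on $(\gamma_A^{2,2})^{-1}$ (Lemma \ref{lemma2017-05-12-1}).

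A secondary gap concerns positivity. For $s_0 > 0$, the claim $F_2 > 0$ a.s.\ does not follow from a ``standard symmetry-plus-zero-one-law argument'': the germ zero-one law (Lemma \ref{prop3.1}) is available only at time $0$, and $t \mapsto \bar u(t,y_0)$ is not Markov, so there is no Blumenthal-type law at time $s_0$. The paper instead passes to the whole-line solution, applies \cite[p.~23, (3.9)]{Kho14} to that process, and transfers the conclusion by mutual absolute continuity \cite[Corollary 4]{MuT02}. The zero-one-law route is legitimate only for $s_0 = 0$ and for $M_0$, where the paper indeed uses $\limsup_{t_n\downarrow 0}\{u(t_n,x)>0\} \in \mathscr{F}_0^+$ together with $\mathrm{P}\{u(t_n,x)>0\} = \tfrac12$.
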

\noindent{}Theorem \ref{the2017-11-25-1} will be proved in Section \ref{section4.3}.

 By inspecting the proof of Theorem \ref{th2017-08-24-2} (see \cite[Theorem 2.1.4]{Nua06}), we can observe that the integration by parts formula leads us to a new formula for the density of the random vector $F$ (see Proposition \ref{prop2017-10-02-1}). From this formula, it becomes possible to analyze the behavior of the density. Since the choice of the random vector $u_A$ in Theorem \ref{th2017-08-24-2} is not unique, we will see that it is possible to choose this $u_A$ so that it is an adapted process for which the Skorohod integral coincides with the Walsh integral. This makes it possible to use Burkholder's inequality, instead of H\"{o}lder's inequality for Malliavin norms (see \cite[Proposition 1.10, p.50]{Wat84}) to estimate the moments of this stochastic integral. This will allow us to give a sharp Gaussian-type upper bound on this density.

In order to estimate the density of $F$, we assume $I \times J \subset \, ]0, T] \times ]0, 1[$. Assume that there are constants $c_1, C_1$ such that
\begin{align} \label{eq2017-12-26-1}
0 < c_1 < \underline{I} := \inf\{s: s \in I\} \quad  \mbox{and} \quad \bar{I} := \sup\{s: s \in I\} < C_1 < T + 1.
\end{align}
Assume also that there are constants $c_2, C_2$ such that
\begin{align}\label{eq2017-12-26-3}
0 < c_2 < \underline{J} := \inf\{y: y \in J\} \quad  \mbox{and} \quad  \bar{J} := \sup\{y: y \in J\} < C_2 < 1.
\end{align}
Choose $\delta_1 \in \, ]0, 1[$ small enough so that
\begin{align}\label{eq2017-09-26}
s_0 + \delta_1 \in I  \quad \mbox{and} \quad   \delta_1^{1/2} < \min\left\{\underline{J} - c_2, (C_2 - \bar{J})/2\right\};
\end{align}
see Figure \ref{figure1}.

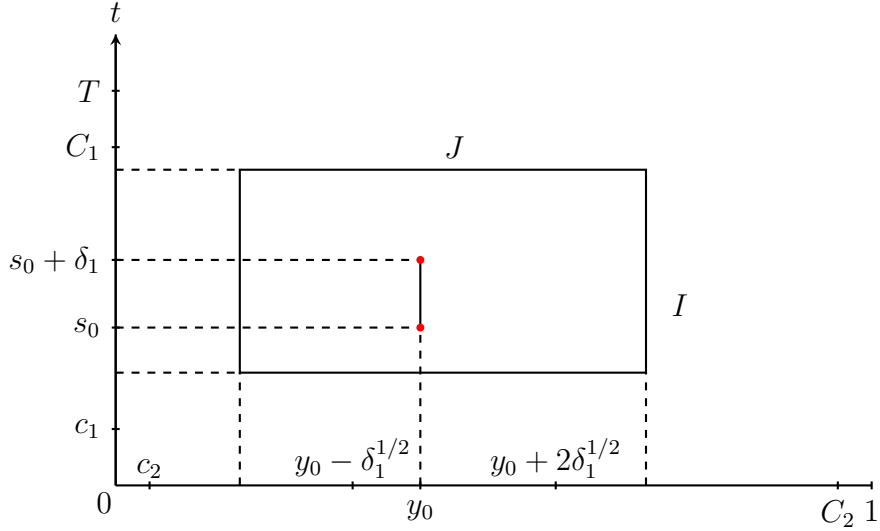
\begin{figure}

\centering {
\begin{tikzpicture}[>=stealth,scale=1.5,line width=0.8pt]

% % % % % % % % % % % % % % %

\pgfmathsetmacro{\ticker}{0.125} %ticker???
%?????

\coordinate (A) at (0,0);
\draw (-0.1,-0.15) node {$0$};
\coordinate (B) at (0,4);
\coordinate (D) at (6.7,0);
\draw(A)--(B)--cycle;
\draw(A)--(D)--cycle;
%??xy
\draw[->] (0, 0) -- (0, 4);

\draw (1.1,1) rectangle (4.7, 2.8);
\draw (0,4.2) node { $\large{t}$};

\draw[dashed] (0, 1) -- (1.1, 1);
\draw[dashed] (1.1, 0) -- (1.1, 1);
\draw[dashed] (0, 2.8) -- (1.1, 2.8);
\draw[dashed] (2.7, 0) -- (2.7, 1.4);

\draw[dashed] (4.7, 0) -- (4.7, 1);
\draw[dashed] (0, 1.4) -- (2.7, 1.4);
\draw[dashed] (0, 2.0) -- (2.7, 2.0);
\draw (2.7,1.4) rectangle (2.7, 2.0);

\fill[red](2.7,1.4) circle (1pt);

\fill[red](2.7,2.0) circle (1pt);

\draw (2.1, 1pt) -- (2.1, -1pt) node[anchor=south] {$y_0 - \delta_1^{1/2} $};

\draw (2.7, 1pt) -- (2.7, -1pt) node[anchor=north] {$y_0$};

\draw (3.9, 1pt) -- (3.9, -1pt) node[anchor=south] {$y_0 + 2\delta_1^{1/2}$};
% \foreach \x in {0.5, 3.3, 3.5}
\draw (0.3, 1pt) -- (0.3, -1pt) node[anchor=south] {$c_2$};
\draw (6.4, 1pt) -- (6.4, -1pt) node[anchor=north] {$C_2$};
\draw (6.7, 1pt) -- (6.7, -1pt) node[anchor=north] {$1$};

{\draw (1pt,3.5) -- (-1pt,3.5) node[anchor=east] {$T$};}
{\draw (1pt,0.5) -- (-1pt,0.5) node[anchor=east] {$c_1$};}
{\draw (1pt,3) -- (-1pt,3) node[anchor=east] {$C_1$};}

{\draw (1pt,1.4) -- (-1pt,1.4) node[anchor=east] {$s_0$};}
{\draw (1pt,2.0) -- (-1pt,2.0) node[anchor=east] {$s_0 + \delta_1$};}

\draw (5,1.6) node {$I$};
\draw (3.,3) node {$J$};

\end{tikzpicture}
}%
\caption{Illustration of conditions \eqref{eq2017-12-26-1}--\eqref{eq2017-09-26}}\label{figure1}
\end{figure}

Denote $(z_1, z_2) \mapsto p(z_1, z_2)$ the probability density function of random vector $F$ with $\delta_1$ satisfying the conditions in \eqref{eq2017-09-26} (the existence of $p(\cdot, \cdot)$ is assured by Theorem \ref{the2017-11-25-1}(a)).

\begin{theorem}\label{theorem100}
Assume $I \times J \subset \, ]0, T] \times ]0, 1[$. There exists a positive constant $c = c(I, J)$ such that for all $\delta_1 > 0$ satisfying \eqref{eq2017-09-26}, and for all $z_2 \geq \delta_1^{1/4}$, $z_1 \in \mathbb{R}$ and any $(s_0, y_0) \in I \times J$,
\begin{align}
p(z_1, z_2) &\leq \frac{c}{\delta_1^{1/4}}\exp\left(-\frac{z_2^2}{c\, \delta_1^{1/2}}\right)(|z_1|^{-1/4} \wedge 1)\exp(-z_1^2/c)\label{eq2017-09-26-1}\\
   &\leq \frac{c}{\delta_1^{1/4}}\exp\left(-\frac{z_2^2}{c\, \delta_1^{1/2}}\right). \label{eq2017-09-08-100}
\end{align}
\end{theorem}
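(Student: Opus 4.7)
The plan is to use the integration-by-parts density formula stated in Proposition \ref{prop2017-10-02-1} (immediately preceding this theorem), together with Burkholder's inequality made available by an adapted choice of the auxiliary process, and to close with Gaussian-type tail bounds on the event $\{F_1 > z_1,\, F_2 > z_2\}$.

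First, Proposition \ref{prop2017-10-02-1} will provide an identity of the form
$$p(z_1, z_2) = E\!\left[\mathbf{1}_{\{F_1 > z_1,\, F_2 > z_2\}}\, \mathscr{H}(z_1)\right]$$
(with the opposite orientation $\{F_1 < z_1\}$ used when $z_1 < 0$), where $\mathscr{H}(z_1)$ is a Malliavin weight expressible as an iterated Skorokhod integral of a functional of the auxiliary random field $u_A$ appearing in Theorem \ref{th2017-08-24-2}. Because Theorem \ref{th2017-08-24-2} leaves the choice of $u_A$ free, I would pick $u_A$ adapted to the filtration generated by $W$, so that each Skorokhod integral in $\mathscr{H}(z_1)$ reduces to a Walsh integral; Burkholder's inequality then delivers sharp moment bounds on $\mathscr{H}(z_1)$, in place of the coarser H\"older-type bound for Malliavin norms in \cite[Proposition 1.10, p.50]{Wat84}.

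Applying H\"older's inequality for a conjugate pair $(q, q')$ gives
$$p(z_1, z_2) \leq \|\mathscr{H}(z_1)\|_{L^q}\, P(F_1 > z_1,\, F_2 > z_2)^{1/q'}.$$
The joint probability is controlled by two independent Gaussian estimates. For $F_2$, the process $\{\bar u(t, y_0)\}_{t \in [s_0, s_0+\delta_1]}$ is centered Gaussian with $\sup_t E[\bar u(t,y_0)^2] \leq C \delta_1^{1/2}$ by \eqref{eq2016-06-14-1}, and Dudley's entropy bound gives $E[F_2] \leq C \delta_1^{1/4}$; the Borell-TIS inequality then yields $P(F_2 > z_2) \leq C \exp(-z_2^2/(c\delta_1^{1/2}))$ as soon as $z_2 \geq \delta_1^{1/4}$. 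For $F_1$, since $\Var(F_1)$ is bounded above and below uniformly in $s_0 \in I$, the one-dimensional Gaussian tail gives $P(\pm F_1 > |z_1|) \leq C \exp(-z_1^2/c)$. A Cauchy-Schwarz split of the joint probability produces the product of the two Gaussian factors in \eqref{eq2017-09-26-1}.

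The technical core is to establish
$$\|\mathscr{H}(z_1)\|_{L^q} \leq C\, \delta_1^{-1/4}\,(|z_1|^{-1/4} \wedge 1).$$
The factor $\delta_1^{-1/4}$ arises because $\|DF_2\|_{\mathcal{H}}^2$ is of order $\delta_1^{1/2}$, so the inverse Malliavin covariance contributes a power $\delta_1^{-1/2}$, which combines with a factor of $\|DF_2\|_{\mathcal{H}}\sim\delta_1^{1/4}$ in the divergence, and this scaling propagates through the Walsh integrals by Burkholder. For $|z_1| \leq 1$ the bound $(|z_1|^{-1/4}\wedge 1)=1$ is automatic. For $|z_1|>1$ one performs an additional integration by parts in the $F_1$-direction, producing a weight of type $1/F_1$; balancing its integrable singularity against the Gaussian density of $F_1$ via one further H\"older split yields the $|z_1|^{-1/4}$ gain. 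The main obstacle is verifying that an adapted $u_A$ can be chosen that still fulfills the local nondegeneracy hypotheses of Theorem \ref{th2017-08-24-2}, and then doing the $L^p$-bookkeeping so that the $\delta_1$- and $z_1$-scalings come out exactly as stated rather than being lost to suboptimal exponents. Finally, \eqref{eq2017-09-08-100} is immediate from \eqref{eq2017-09-26-1} because $(|z_1|^{-1/4}\wedge 1)\exp(-z_1^2/c) \leq 1$.
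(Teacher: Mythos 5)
Your skeleton matches the paper's proof almost exactly: the density formula of Proposition \ref{prop2017-10-02-1} (with the reversed orientation $\{F_1<z_1\}$ for $z_1<0$, as in Remark \ref{remark2017-10-11-1}), an adapted choice of $u_A^2$ so that the Skorohod integrals become Walsh integrals amenable to Burkholder (Proposition \ref{lemma2017-05-04-2} and Lemma \ref{lemma05-04-1}), a H\"older split of the expectation into $\mbox{P}\{|F_1|>|z_1|\}^{1/4}\,\mbox{P}\{F_2>z_2\}^{1/4}\,\|\delta(\delta(u_A^2/\gamma_A^{2,2})u_A^1)\|_{L^2}$, the bound $\mbox{E}[F_2]\leq c\,\delta_1^{1/4}$ plus Borell's inequality for the $F_2$-tail, and the uniform non-degeneracy of $\Var(F_1)$ for the $F_1$-tail. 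The scaling $\delta_1^{-1/4}$ for the weight is also correct, although the mechanism is not quite the one you describe: since $F_2\notin\mathbb{D}^\infty$, the paper does not use $\|DF_2\|_{\mathscr H}^2$ as the covariance but the surrogate $\gamma_A^{2,2}=\int_{s_0}^{s_0+\delta_1}\psi(Y_r)\,dr$, whose inverse is of order $\delta_1^{-1}$ (Lemma \ref{lemma2017-05-12-1}(a)), combined with $\|\delta(u_A^2)\|_{L^p}\leq c\,\delta_1^{3/4}$; three of the six terms in the expansion of the iterated divergence vanish outright because $\langle DY_r,u_A^1\rangle_{\mathscr H}=0$.

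The one genuine deviation is your treatment of the factor $(|z_1|^{-1/4}\wedge 1)$. You attribute it to the Malliavin weight, via an extra integration by parts in the $F_1$-direction producing a ``weight of type $1/F_1$''; this is both unnecessary and not justified as sketched (the weight $\delta(\delta(u_A^2/\gamma_A^{2,2})u_A^1)$ does not depend on $z_1$ at all, and a $1/F_1$ singularity is not integrable against the Gaussian law of $F_1$ near the origin, so the ``balancing'' step would need real work). The paper obtains this factor for free from the refined one-dimensional Gaussian tail estimate $\mbox{P}\{|F_1|>|z_1|\}\leq c\,(|z_1|^{-1}\wedge 1)e^{-z_1^2/c}$ (see \eqref{eq2017-09-07-14}), which upon raising to the power $1/4$ in your own H\"older split yields exactly $(|z_1|^{-1/4}\wedge 1)\exp(-z_1^2/c')$. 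Replacing your extra integration by parts with this elementary tail bound closes the argument and makes the rest of your proposal go through.
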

\noindent{}The proof of this theorem will be presented in  Section \ref{section4.4}.
Note that (\ref{eq2017-09-08-100}) is an immediate consequence of (\ref{eq2017-09-26-1}). As a consequence of Theorems \ref{the2017-11-25-1} and \ref{theorem100}, we deduce the following.
\begin{cor}\label{th2017-08-29-100}
Let $I$ and $J$  be as above \eqref{eq2018-03-01-600}. The random variable $F_2$ has an infinitely differentiable density on $]0, \infty[$, denoted by $z_2 \mapsto p_{F_2}(z_2)$ . Suppose that $I \times J \subset \, ]0, T] \times ]0, 1[$. Then there exists a positive constant $c = c(I, J)$ such that for all $\delta_1 > 0$ satisfying \eqref{eq2017-09-26}, and for all $z_2 \geq \delta_1^{1/4}$ and any $(s_0, y_0) \in I \times J$,
\begin{align*}
p_{F_2}( z_2) \leq \frac{c}{\delta_1^{1/4}}\exp\left(-\frac{z_2^2}{c\, \delta_1^{1/2}}\right).
\end{align*}
\end{cor}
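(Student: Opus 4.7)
The plan is to obtain this corollary as a direct consequence of Theorem~\ref{the2017-11-25-1}(a) and Theorem~\ref{theorem100}, by marginalizing out the first coordinate of $F = (F_1, F_2)$. Denote by $p(z_1,z_2)$ the joint density of $F$ on $\mathbb{R} \times ]0, \infty[$, which is $C^\infty$ by Theorem~\ref{the2017-11-25-1}(a). For every fixed $z_2 > 0$ one expects
\begin{align*}
p_{F_2}(z_2) = \int_{\mathbb{R}} p(z_1, z_2) \, dz_1,
\end{align*}
and the goal is to show that this integral defines a $C^\infty$ function of $z_2$ on $]0, \infty[$ together with the claimed Gaussian-type bound.

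For the smoothness claim, I would argue along one of two routes. The slickest route is to apply the Florit--Nualart criterion (Theorem~\ref{th2017-08-24-2}) directly to the single random variable $F_2$ rather than to the pair $F$: by Lemma~\ref{lemma9} we have $F_2 \in \mathbb{D}^{1,2}$, and the local nondegeneracy argument that is already used in the proof of Theorem~\ref{the2017-11-25-1}(a) to handle the second Malliavin-matrix entry applies verbatim here, since the noise increments on $[s_0, s_0+\delta_1] \times [y_0-\delta_1^{1/2}, y_0+2\delta_1^{1/2}]$ (or the spatial variant used for Theorem~\ref{the2017-11-25-1}(a)) are independent of $F_1$ and produce a positive lower bound on $\|DF_2\|_H^2$ on the event $\{F_2 > \varepsilon\}$. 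Alternatively, one can differentiate under the integral sign in the marginalization formula: the Gaussian-type bound \eqref{eq2017-09-26-1} gives uniform integrability in $z_1$ on compact subsets of $\{z_2 > 0\}$, and the same Malliavin integration-by-parts machinery used to prove Theorem~\ref{theorem100} yields analogous bounds on $\partial_{z_2}^k p(z_1, z_2)$, which together justify differentiation and give $p_{F_2} \in C^\infty(]0,\infty[)$.

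For the quantitative upper bound, I would simply integrate \eqref{eq2017-09-26-1} in $z_1$:
\begin{align*}
p_{F_2}(z_2) \le \frac{c}{\delta_1^{1/4}} \exp\!\left(-\frac{z_2^2}{c\,\delta_1^{1/2}}\right) \int_{\mathbb{R}} (|z_1|^{-1/4} \wedge 1) \exp(-z_1^2/c) \, dz_1.
\end{align*}
The remaining integral is finite because $|z_1|^{-1/4}$ is locally integrable at the origin (the exponent $-1/4$ being strictly greater than $-1$) and the Gaussian factor gives decay at infinity; absorbing its value into the constant $c$ produces the stated inequality on $\{z_2 \ge \delta_1^{1/4}\}$.

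The main obstacle, and the only non-routine point, is the smoothness claim. The bound \eqref{eq2017-09-26-1} controls $p$ itself but says nothing about derivatives, so passing from smoothness of the joint density to smoothness of the marginal is not purely formal unless one either (i) argues via Florit--Nualart applied to $F_2$ directly (reusing the local nondegeneracy estimates), or (ii) re-runs the integration-by-parts proof of Theorem~\ref{theorem100} with test functions having higher polynomial growth in $z_1$ to obtain $L^1(dz_1)$ bounds on $\partial_{z_2}^k p(z_1, z_2)$. I would favor route (i), since it avoids repeating calculations and exploits the fact that the Malliavin matrix of the scalar $F_2$ is exactly the nondegenerate entry already analyzed in the proof of Theorem~\ref{the2017-11-25-1}(a).
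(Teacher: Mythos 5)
Your proposal is correct and matches the paper's (unwritten, ``immediate consequence'') argument: the quantitative bound is obtained by integrating \eqref{eq2017-09-26-1} in $z_1$, using that $|z_1|^{-1/4}\wedge 1$ times the Gaussian factor is integrable, and the smoothness comes from applying Theorem~\ref{th2017-08-24-2} to the scalar $F_2$ alone with $u_A^2$ and $\gamma_A^{2,2}$ --- exactly the route the paper already takes for the case $s_0=0$, with Lemmas~\ref{lemma2017-1}, \ref{lemma2017-05-12-1}(a) and \ref{lemma2017-2}(a) supplying conditions (i)--(iii). Your route (i) is the right choice; the only minor imprecision is the phrasing about a ``positive lower bound on $\|DF_2\|_{\mathscr H}^2$,'' since the Florit--Nualart criterion works through $\langle DF_2,u_A^2\rangle_{\mathscr H}=\gamma_A^{2,2}$ and $(\gamma_A^{2,2})^{-1}\in L^p$ rather than through the Malliavin matrix of $F_2$ itself.
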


\begin{remark}
Theorem \ref{theorem100} provides an alternative proof of \cite[Theorem 3.1(3)]{DKN07} with $\beta = d$ there for the upper bound on hitting probabilities at a fixed spatial position; see \cite[Remark 4.2.4]{Pu18} for the explanation.
\end{remark}

We will also give a Gaussian-type upper bound on the density of $M_0$ under the assumption $y_0 \in J \subset \, ]0, 1[$. Choose a positive constant $\bar{C}_1$ with $\bar{C}_1 < T$. Let $c_2$, $C_2$ be chosen as in \eqref{eq2017-12-26-3}. Choose $\delta_1$, $\delta_2 \in \, ]0, 1[$ small enough so that
\begin{align}\label{eq2017-09-2600}
y_0 + \delta_2 \in J, \quad (\delta_1^{1/2} + \delta_2)^2 < \bar{C}_1   \quad \mbox{and} \quad  \delta_1^{1/2} + \delta_2< \min\left\{\underline{J} - c_2, (C_2 - \bar{J})/2\right\};
\end{align}
see Figure \ref{figure2018-01-03-1}.

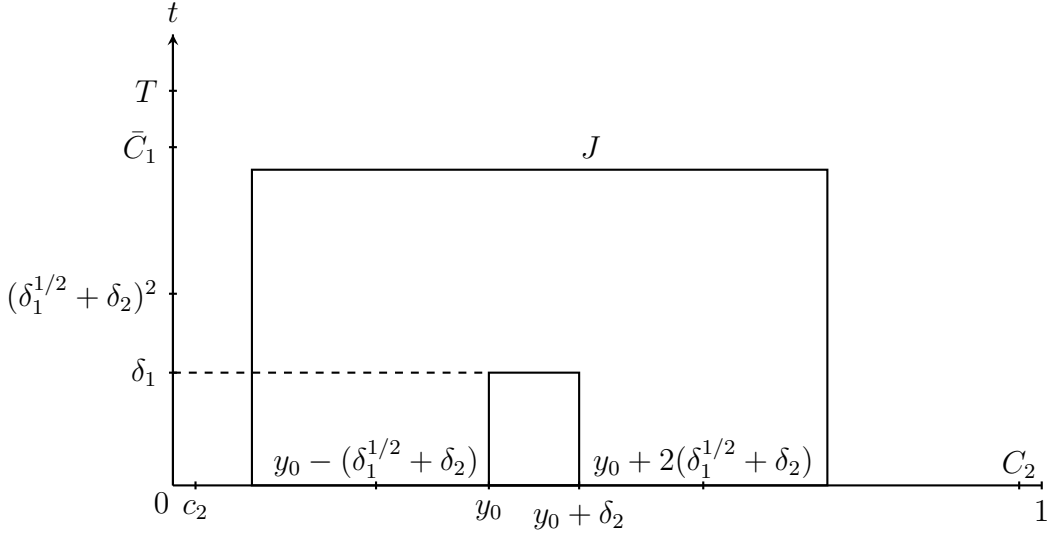
\begin{figure}

\centering {
\begin{tikzpicture}[>=stealth,scale=1.5,line width=0.8pt]

% % % % % % % % % % % % % % %

\pgfmathsetmacro{\ticker}{0.125} %ticker???
%?????

\coordinate (A) at (0,0);
\draw (-0.1,-0.15) node {$0$};
\coordinate (B) at (0,4);
\coordinate (D) at (7.7,0);
\draw(A)--(B)--cycle;
\draw(A)--(D)--cycle;
%??xy
\draw[->] (0, 0) -- (0, 4);

\draw (0.7,0) rectangle (5.8, 2.8);
\draw (0,4.2) node { $\large{t}$};
\draw[dashed] (0, 1) -- (2.8, 1);
\draw (2.8,0) rectangle (3.6, 1);

\draw (1.8, 1pt) -- (1.8, -1pt) node[anchor=south] {$y_0 - (\delta_1^{1/2} + \delta_2)$};

\draw (2.8, 1pt) -- (2.8, -1pt) node[anchor=north] {$y_0$};

\draw (3.6,1pt) -- (3.6, -1pt) node[anchor=north] {$y_0 + \delta_2$};

\draw (4.7, 1pt) -- (4.7, -1pt) node[anchor=south] {$y_0 + 2(\delta_1^{1/2} + \delta_2)$};
% \foreach \x in {0.5, 3.3, 3.5}
\draw (0.2, 1pt) -- (0.2, -1pt) node[anchor=north] {$c_2$};
\draw (7.5, 1pt) -- (7.5, -1pt) node[anchor=south] {$C_2$};
\draw (7.7, 1pt) -- (7.7, -1pt) node[anchor=north] {$1$};

{\draw (1pt,3.5) -- (-1pt,3.5) node[anchor=east] {$T$};}

{\draw (1pt,3) -- (-1pt,3) node[anchor=east] {$\bar{C}_1$};}
{\draw (1pt,1) -- (-1pt,1) node[anchor=east] {$\delta_1$};}
{\draw (1pt,1.7) -- (-1pt,1.7) node[anchor=east] {$(\delta_1^{1/2} + \delta_2)^2$};}
\draw (3.7,3) node {$J$};
\end{tikzpicture}
}%
\caption{Illustration of condition \eqref{eq2017-09-2600}}\label{figure2018-01-03-1}
\end{figure}

Denote $z \mapsto p_0(z)$ the probability density function of random variable $M_0$ with $\delta_1$, $\delta_2$ satisfying the conditions in \eqref{eq2017-09-2600} (the existence of $p_0(\cdot)$ is assured by Theorem \ref{the2017-11-25-1}(b)).

\begin{theorem} \label{theorem2018-01-03-1}
Assume $J \subset \, ]0, 1[$. There exists a finite positive constant $c = c(T, J)$ such that for all $\delta_1$, $\delta_2$ satisfying the conditions in \eqref{eq2017-09-2600}, for all $y_0 \in J$  and $z \geq (\delta_1^{1/2} + \delta_2)^{1/2}$,
                     \begin{align}\label{eq2018-01-02-16}
                     p_0(z) \leq \frac{c}{\sqrt{\delta_1^{1/2} + \delta_2}}\exp\left(-\frac{z^2}{c \, (\delta_1^{1/2} + \delta_2)}\right).
                     \end{align}

\end{theorem}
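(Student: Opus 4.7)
The plan is to mimic the strategy of Theorem \ref{theorem100}: combine the Malliavin-calculus density formula coming from the proof of the Florit--Nualart criterion with a judicious adapted choice of the auxiliary process $u_B$, so that the resulting Skorohod integral coincides with a Walsh integral and Burkholder's inequality applies, and then feed in the Borell--TIS concentration bound for the supremum of the Gaussian field $u$ to produce the Gaussian tail factor. Concretely, by inspecting the proof of Theorem \ref{th2017-08-24-2} applied to $M_0 \in \mathbb{D}^{1,2}$ and to an auxiliary $u_B \in \mathrm{Dom}\,\delta$ with $\langle DM_0, u_B\rangle_{\mathcal{H}} > 0$ a.s., where $\mathcal{H} = L^2([0,T]\times[0,1])$, one arrives at a representation of the form
\[
p_0(z) = \E\!\left[\1_{\{M_0 > z\}}\, \delta\!\left(\frac{u_B}{\langle DM_0, u_B\rangle_{\mathcal{H}}}\right)\right].
\]

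Next I would choose $u_B$ to be the deterministic indicator of the enlarged rectangle drawn in Figure \ref{figure2018-01-03-1}:
\[
u_B(r,v) = \1_{[0,(\delta_1^{1/2}+\delta_2)^2]}(r)\,\1_{[y_0-(\delta_1^{1/2}+\delta_2),\,y_0+\delta_2+(\delta_1^{1/2}+\delta_2)]}(v),
\]
so that condition \eqref{eq2017-09-2600} guarantees that the support of $u_B$ lies inside $[0,T]\times\,]0,1[$, and $u_B$ is adapted. Using the fact (to be established as part of the proof of Theorem \ref{the2017-11-25-1}(b)) that $DM_0(r,v) = G(\tau - r,\xi,v)\,\1_{\{r < \tau\}}$ for the random maximizer $(\tau,\xi) \in [0,\delta_1]\times[y_0,y_0+\delta_2]$, a lower bound on the integral of the heat kernel over a parabolic window of diameter $\delta_1^{1/2}+\delta_2$ around $(\tau,\xi)$ will yield a pathwise positive lower bound on $\langle DM_0, u_B\rangle_{\mathcal{H}}$ with explicit scaling in $\delta_1, \delta_2$.

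Combining the two ingredients through the Cauchy--Schwarz inequality gives
\[
p_0(z) \leq \mathrm{P}(M_0 > z)^{1/2}\,\Big\|\delta\!\Big(\tfrac{u_B}{\langle DM_0, u_B\rangle_{\mathcal{H}}}\Big)\Big\|_{L^2(\Omega)}.
\]
The probability factor is handled by the Borell--TIS inequality applied to the centered Gaussian field $u$ on the rectangle: since both the variance of $u(t,x)$ and the Dudley metric arising from \eqref{eq2016-06-14-1} are controlled by $\delta_1^{1/2}+\delta_2$, one gets $\mathrm{P}(M_0 > z) \leq c\exp(-z^2/(c(\delta_1^{1/2}+\delta_2)))$ for $z \geq (\delta_1^{1/2}+\delta_2)^{1/2}$. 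For the divergence norm, adaptedness of the integrand lets me invoke Burkholder's inequality on the associated Walsh integral, reducing the bound to an $L^p(\Omega;\mathcal{H})$-norm controlled by $\|u_B\|_{\mathcal{H}}$ and by the negative moments of $\langle DM_0, u_B\rangle_{\mathcal{H}}$; a careful accounting of the powers of $\delta_1^{1/2}+\delta_2$ then produces the prefactor $1/\sqrt{\delta_1^{1/2}+\delta_2}$ appearing in \eqref{eq2018-01-02-16}.

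The main obstacle I anticipate is the non-degeneracy input: one must control the negative moments of $\langle DM_0, u_B\rangle_{\mathcal{H}}$ with the right dependence on $\delta_1, \delta_2$, uniformly in the realization of the random argmax $(\tau,\xi)$. Because $M_0 \notin \mathbb{D}^{\infty}$ and $(\tau,\xi)$ is a non-smooth functional of the noise, standard nondegeneracy tools do not apply; one has to exploit the explicit Gaussian-type lower bounds on the heat kernel together with the fact that the enlarged rectangle in the support of $u_B$ contains, by design, a full parabolic neighbourhood of any candidate maximizer $(\tau,\xi) \in [0,\delta_1]\times[y_0,y_0+\delta_2]$, with margin at least $\delta_1^{1/2}+\delta_2$ on each side.
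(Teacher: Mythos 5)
Your overall skeleton (density formula from the Florit--Nualart integration by parts, Cauchy--Schwarz, Borell's inequality for $\mathrm{P}\{M_0>z\}$, adaptedness plus Burkholder for the divergence, negative moments of the nondegeneracy factor) is exactly the paper's, but your choice of $u_B$ creates a genuine gap. With $u_B$ a deterministic indicator, the nondegeneracy factor $\gamma:=\langle DM_0,u_B\rangle_{\mathscr{H}}=\int_0^{\bar S}\int G(\bar S-r,\bar X,v)\,\1_{[\dots]}(v)\,dv\,dr$ is a functional of the random argmax $(\bar S,\bar X)$, which is a non-smooth functional of the noise. This breaks the argument in three places: (i) the derivation of the formula $p_0(z)=\E[\1_{\{M_0>z\}}\delta(u_B/\gamma)]$ via duality requires $u_B/\gamma\in\mathrm{Dom}\,\delta$ and, to estimate it, the decomposition $\delta(u_B/\gamma)=\delta(u_B)/\gamma+\langle D\gamma,u_B\rangle_{\mathscr{H}}/\gamma^2$ requires $\gamma\in\mathbb{D}^{1,2}$ (indeed Theorem \ref{th2017-08-24-2} demands $\gamma\in\mathbb{D}^\infty$), which fails here; (ii) the quotient $u_B/\gamma$ is \emph{not} adapted, since $\gamma$ depends on the whole path on $[0,\delta_1]$, so Proposition \ref{lemma2017-05-04-2} and Burkholder apply to $u_B$ alone but not to the integrand you actually need to integrate; (iii) even granting a pathwise heat-kernel lower bound, you would get $\gamma\gtrsim \bar S$, and controlling negative moments of the random time $\bar S$ of the maximum with the right $\delta$-scaling is a separate unproven claim.

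The paper's construction is designed precisely to avoid this. It takes $u_{\bar A}=(\partial_r-\partial_v^2)\bar H$ with $\bar H(r,v)=\bar\phi_\delta(v)\int_0^r\bar\psi(\bar Y_a)\,da$, where $\bar Y_r$ is an adapted H\"older-seminorm functional of the path and $\bar\psi$ a smooth cutoff at the level $\bar R$ of Lemma \ref{lemma11}(b). By the heat-equation uniqueness lemma (Lemma \ref{lemma2017-11-27-1}), $\langle DM_0,u_{\bar A}\rangle_{\mathscr{H}}=\bar H(\bar S,\bar X)=\int_0^{\bar S}\bar\psi(\bar Y_r)\,dr$, and the localization forces $\bar\psi(\bar Y_r)=0$ for $r>\bar S$ on $\{M_0>z/2\}$, so this equals $\gamma_{\bar A}=\int_0^{\Delta_\bullet}\bar\psi(\bar Y_r)\,dr$ --- an adapted, $\mathbb{D}^\infty$ random variable with no residual dependence on the argmax, whose negative moments are controlled in Lemma \ref{lemma2017-05-12-1}(b) via the small-ball estimate for $\bar Y_r$. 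Your heat-kernel-lower-bound idea would at best address the negative moments, not the smoothness and adaptedness requirements; without a replacement for the $\bar\psi(\bar Y_r)$ localization the proof does not go through.
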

\noindent{}The proof of Theorem \ref{theorem2018-01-03-1} will be presented in  Section \ref{section4.7}.

\begin{remark}
It is also interesting to consider the supremum over a spatial interval at a fixed time, such as
\begin{align}\label{eq2018-10-18-1}
\sup_{x \in [y_0, y_0 + \delta_2]}(u(s_0, x) - u(s_0, y_0))
\end{align}
and the supremum over a general rectangle such as
\begin{align*}
\sup_{(t,x) \in [s_0, s_0 + \delta_1] \times [y_0, y_0 + \delta_2]}(u(t, x) - u(s_0, y_0)).
\end{align*}
These questions are somewhat different (for instance, there is no time evolution in \eqref{eq2018-10-18-1}) and are the subjects of current research.
\end{remark}

\section{Elements of Malliavin calculus}

In this section, we introduce, following Nualart \cite{Nua06} (see also \cite{San05}), some elements of Malliavin calculus. Let $W = \{W(h), h \in \mathscr{H}\}$ denote the isonormal Gaussian process (see \cite[Definition 1.1.1]{Nua06}) associated with space-time white noise, where $\mathscr{H}$ is the Hilbert space $L^2([0, T] \times [0, 1], \mathbb{R})$. Let $\mathscr{S}$ denote the class of smooth random variables of the form
\begin{align*}
G = g(W(h_1), \ldots, W(h_n)),
\end{align*}
where $n \geq 1$, $g \in \mathscr{C}_p^{\infty}(\mathbb{R}^n)$, the set of real-valued functions $g$ such that $g$ and all its partial derivatives have at most polynomial growth and $h_i \in \mathscr{H}$. Given $G \in \mathscr{S}$, its derivative is defined to be the $\mathbb{R}$-valued stochastic process $DG = (D_{t, x}G, \, (t, x) \in [0, T] \times [0, 1])$ given by
\begin{align*}
D_{t, x}G = \sum_{i = 1}^n \partial_i g (W(h_1), \ldots, W(h_n))h_i(t, x).
\end{align*}
More generally, we can define the derivative $D^kG$ of order $k$ of $G$ by setting
 \begin{align*}
D_{\alpha}^kG = \sum_{i_1, \ldots, i_k = 1}^n \frac{\partial}{\partial x_{i_1}}\cdots \frac{\partial}{\partial x_{i_k}}\, g(W(h_1), \ldots, W(h_n))h_{i_1}(\alpha_1)\otimes \cdots \otimes h_{i_k}(\alpha_k),
\end{align*}
where $\alpha = (\alpha_1, \ldots, \alpha_k)$, $\alpha_i = (t_i, x_i), 1 \leq i \leq k$ and the notation $\otimes$ denotes the tensor product of functions.

For $p$, $k \geq 1$, the space $\mathbb{D}^{k, p}$ is the closure of $\mathscr{S}$ with respect to the seminorm $\|\cdot\|_{k, p}$ defined by
\begin{align*}
\|G\|_{k, p}^p = \mbox{E}[|G|^p] + \sum_{j = 1}^k\mbox{E}\left[\|D^jG\|_{\mathscr{H}^{\otimes j}}^p\right],
\end{align*}
where
\begin{align*}
\|D^jG\|_{\mathscr{H}^{\otimes j}}^2 = \int_0^Tdt_1\int_0^1dx_1 \cdots \int_0^Tdt_j\int_0^1dx_j\left(D_{t_1, x_1} \cdots D_{t_j, x_j}G\right)^2.
\end{align*}
We set $\mathbb{D}^{\infty} = \cap_{p \geq 1}\cap_{k \geq 1}\mathbb{D}^{k, p}$.

For any given Hilbert space $V$, the corresponding Sobolev space of $V$-valued random variables can also be introduced. More precisely, let $\mathscr{S}_V$ denote the family of $V$-valued smooth random variables of the form
$
G = \sum_{j = 1}^nG_jv_j, \, (v_j, G_j) \in V \times \mathscr{S}.
$
We define
\begin{align*}
D^kG = \sum_{j = 1}^n(D^kG_j) \otimes v_j, \quad k \geq 1.
\end{align*}
Then $D^k$ is a closable operator from $\mathscr{S}_V \subset L^p(\Omega, V)$ into $L^p(\Omega, \mathscr{H}^{\otimes k} \otimes V)$ for any $p \geq 1$. For $p, k \geq 1$, a seminorm $\|\cdot\|_{k, p, V}$ is defined on $\mathscr{S}_V$ by
\begin{align*}
\|G\|_{k, p, V}^p = \mbox{E}\left[\|G\|_{V}^p\right] + \sum_{j = 1}^k \mbox{E}\left[\|D^jG\|^p_{\mathscr{H}^{\otimes j} \otimes V}\right].
\end{align*}
We denote by $\mathbb{D}^{k, p}(V)$ the closure of $\mathscr{S}_V$ with respect to the seminorm $\|\cdot\|_{k, p, V}$. We set $\mathbb{D}^{\infty}(V) = \cap_{p \geq 1}\cap_{k \geq 1}\mathbb{D}^{k, p}(V)$.

The derivative operator $D$ on $L^2(\Omega)$ has an adjoint, termed the Skorohod integral and denoted by $\delta$, which is an unbounded and closed operator on $L^2(\Omega, \mathscr{H})$; see \cite[Section 1.3]{Nua06}. Its domain, denoted by  $\mbox{Dom}\ \delta$,  is the set of elements $u \in L^2(\Omega, \mathscr{H})$ such that there exists a constant $c$ such that $|\mbox{E}[\langle DG, u \rangle_{\mathscr{H}}]| \leq c \|G\|_{0, 2}$, for any $G \in \mathbb{D}^{1, 2}$. If $u \in \mbox{Dom}\ \delta$, then $\delta(u)$ is the element of $L^2(\Omega)$ characterized by the following duality relation:
\begin{align*}
\mbox{E}[G\, \delta(u)] = \mbox{E}\left[\int_0^T\int_0^1 D_{t, x}G \, u(t, x)dtdx\right], \quad \mbox{for all }\ G \in \mathbb{D}^{1, 2}.
\end{align*}

In order to handle random vectors whose components are not in $\mathbb{D}^{\infty}$, we recall the following general criterion for smoothness of densities established in \cite{FlN95}.

\begin{theorem}[{{\cite[Theorem 2.1]{FlN95} or \cite[Theorem 2.1.4]{Nua06}}}]\label{th2017-08-24-2}
Let $F = (F^1, \ldots, F^d)$ be a random vector whose components are in $\mathbb{D}^{1, 2}$. Let $A$ be an open subset of $\mathbb{R}^d$. Suppose that there exist $\mathscr{H}$-valued random variables $u_A^j, j = 1, \ldots, d$ and a $d \times d$ random matrix $\gamma_A = (\gamma_A^{i, j})$ such that
\begin{enumerate}
  \item [(i)] $u_A^j \in \mathbb{D}^{\infty}(\mathscr{H})$ for all $j = 1, \ldots, d$,
  \item [(ii)] $\gamma_A^{i, j} \in \mathbb{D}^{\infty}$ for all $i, j = 1, \ldots, d$, and $|\det \gamma_A|^{-1} \in L^p(\Omega)$ for all $p \geq 1$,
  \item [(iii)] $\langle DF^i, u_A^j \rangle_{\mathscr{H}} = \gamma_A^{i, j}$ on $\{F \in A\}$,  for all $i, j = 1, \ldots, d$.
\end{enumerate}
Then the random vector $F$ possesses an infinitely differentiable density on the open set $A$.
\end{theorem}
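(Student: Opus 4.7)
The plan is to establish the standard integration-by-parts (IBP) criterion for smoothness on $A$: show that for every multi-index $\alpha \in \IN^d$ there is a constant $c_\alpha$ such that
\begin{equation*}
|\E[\partial^\alpha \varphi(F)]| \leq c_\alpha \|\varphi\|_\infty \quad \text{for all } \varphi \in C_c^\infty(A).
\end{equation*}
By a standard distributional lemma (see, e.g., \cite[Lemma 2.1.5]{Nua06}), this implies that the law of $F$, restricted to $A$, admits a $C^\infty$ density.

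For the base case $|\alpha| = 1$, I would apply the chain rule to $\varphi(F)$ and take the Malliavin inner product with $u_A^j$. Since $\varphi \in C_c^\infty(A)$, each $\partial_i \varphi(F)$ vanishes off $\{F \in A\}$, so hypothesis (iii) gives the pathwise identity
\begin{equation*}
\langle D(\varphi(F)), u_A^j \rangle_\mathscr{H} = \sum_{i=1}^d \partial_i \varphi(F)\, \langle DF^i, u_A^j \rangle_\mathscr{H} = \sum_{i=1}^d \partial_i \varphi(F)\, \gamma_A^{i,j}.
\end{equation*}
By (ii), $\gamma_A$ is a.s. invertible with $(\gamma_A^{-1})^{i,j} \in \mathbb{D}^\infty$ (Cramer's rule together with $(\det \gamma_A)^{-1} \in \cap_{p \geq 1} L^p$). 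Inverting this linear system and using the duality of $D$ and $\delta$, I obtain, for any $G \in \mathbb{D}^\infty$,
\begin{equation*}
\E[\partial_i \varphi(F)\, G] = \sum_{j=1}^d \E\!\left[\varphi(F)\, \delta\!\left(G\, (\gamma_A^{-1})^{j,i}\, u_A^j\right)\right].
\end{equation*}
The Skorohod integrand lies in $\text{Dom}\, \delta$ because (i) yields $u_A^j \in \mathbb{D}^\infty(\mathscr{H})$, and $\mathbb{D}^\infty$ is an algebra that stabilizes $\mathbb{D}^\infty(\mathscr{H})$ under multiplication.

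The next step is to iterate this identity: $k$ successive applications, at each stage feeding the newly produced Skorohod integral back into the role of $G$, yield a representation $\E[\partial^\alpha \varphi(F)] = \E[\varphi(F)\, H_\alpha]$ in which $H_\alpha$ is built from iterated Skorohod integrals of $\mathbb{D}^\infty(\mathscr{H})$-elements. Since $\delta$ maps $\mathbb{D}^\infty(\mathscr{H})$ into $\mathbb{D}^\infty$, one has $H_\alpha \in L^p$ for every $p \geq 1$, and the required bound follows with $c_\alpha = \|H_\alpha\|_1$.

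The principal obstacle is the localization: hypothesis (iii) only asserts the identity $\langle DF^i, u_A^j \rangle = \gamma_A^{i,j}$ on the event $\{F \in A\}$, not pathwise. It is resolved by the observation that each $\partial_i \varphi(F)$ appearing in the chain-rule expansion already vanishes off $\{F \in A\}$, so any discrepancy there is annihilated; after one IBP the localization is absorbed into the factor $\varphi(F)$ and subsequent iterations never need to re-invoke (iii). A secondary, bookkeeping issue is to verify at each step of the iteration that the fresh integrand remains in $\text{Dom}\, \delta$; this follows from (i), (ii) and the stability of $\mathbb{D}^\infty$ and $\mathbb{D}^\infty(\mathscr{H})$ under products and under $\delta$.
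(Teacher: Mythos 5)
Your argument is correct and coincides with the standard proof of this criterion given in the cited sources \cite[Theorem 2.1]{FlN95} and \cite[Theorem 2.1.4]{Nua06} (the paper itself imports the theorem without proof): chain rule plus the vanishing of $\partial_i\varphi(F)$ off $\{F\in A\}$ to upgrade (iii) to an a.s.\ identity, inversion of $\gamma_A$ via Cramer's rule and the lemma that $G\in\mathbb{D}^{\infty}$ with $G^{-1}\in\cap_p L^p(\Omega)$ implies $G^{-1}\in\mathbb{D}^{\infty}$, duality, and iteration. One small imprecision: each of the $k$ iterated integrations by parts \emph{does} re-invoke hypothesis (iii), applied to the intermediate test functions $\partial_{i_{m+1}}\cdots\partial_{i_k}\varphi$, which still belong to $C_0^{\infty}(A)$ so the localization argument goes through at every stage; it is not the case that (iii) is needed only at the first step.
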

A random vector $F$ that satisfies the conditions  in Theorem \ref{th2017-08-24-2} is said to be {\em locally nondegenerate}.

Throughout the paper, the letters $C$, $c$, with or without index, will denote generic positive constants whose values may change from line to line, unless specified otherwise.

\section{Preliminaries}\label{section4.30}

In this section, we assume that $I$ and $J$ are as above \eqref{eq2018-03-01-600}. We will introduce two families of random variables to control the value of the supremum of the solution over a time interval and a space-time rectangle, respectively. For this purpose, we first introduce the Banach space $E_{p, \gamma}[a, b]$.

For an integer $p$, an arbitrary $\gamma \in \, ]\frac{1}{2p}, 1[$ and a continuous function $f$ defined on $[a, b]$, we define the H\"{o}lder seminorm
\begin{align}\label{eq11}
\|f\|_{p, \gamma} := \left(\int_{[a,b]^2}\frac{|f(x) - f(y)|^{2p}}{|x - y|^{1 + 2p\gamma}}dxdy\right)^{1/(2p)}.
\end{align}

Let $E_{p, \gamma}[a, b]$ denote the space of continuous functions vanishing at $a$ and having a finite $\|\cdot\|_{p, \gamma}$ norm. We omit $[a, b]$ if this interval is clear from the context. Each element of $E_{p, \gamma}$ turns out to be H\"{o}lder continuous. Indeed, we apply the Garsia, Rodemich and Rumsey lemma (see \cite[Lemma A.3.1]{Nua06}) to the real-valued function $f$ with $\Psi(x) = x^{2p}$, $p(x) = x^{(1 + 2p\gamma)/(2p)}$, $d = 1$ to find that there exists a constant $c$ such that for all $x$, $y \in [a, b]$,
\begin{align*}
|f(x) - f(y)| \leq c \, |x - y|^{\gamma - \frac{1}{2p}}\|f\|_{p, \gamma}.
\end{align*}
In fact, this is also a consequence of Sobolev embedding theorem; see \cite[Theorem 4.54]{DeD12}.
Moreover, as a fractional Sobolev space, $E_{p, \gamma}[a, b]$ is a separable Banach space; see \cite[Proposition 4.24]{DeD12}.

The following lemma gives an estimate on the rectangular increments of the solution of \eqref{eq2017-10-11-62}.
\begin{lemma} \label{lemma6}
There exists a constant $C_T$ such that for any $\theta \in\, ]0, \frac{1}{2}[$ and $(t, s, x, y) \in [0, T]^2 \times [0, 1]^2$,
\begin{align}
\mbox{E}[(u(t, x) + u(s, y) - u(t, y) - u(s, x))^2] &\leq C_T|t - s|^{\frac{1}{2}}\wedge |x - y| \nonumber\\
&\leq C_T|t - s|^{\frac{1}{2} - \theta}|x - y|^{2\theta}.  \label{eq2017-11-16-1}
\end{align}
\end{lemma}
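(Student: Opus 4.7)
The plan is to derive the two one-sided bounds separately and then combine them via an interpolation inequality. First, I would rewrite the rectangular increment as a telescoping difference in two different ways:
\begin{align*}
u(t,x) + u(s,y) - u(t,y) - u(s,x) &= \bigl(u(t,x) - u(t,y)\bigr) - \bigl(u(s,x) - u(s,y)\bigr) \\
&= \bigl(u(t,x) - u(s,x)\bigr) - \bigl(u(t,y) - u(s,y)\bigr).
\end{align*}
Taking $L^2(\Omega)$ norms and applying Minkowski's inequality to each decomposition, then invoking the known increment estimate \eqref{eq2016-06-14-1} with $p=2$ (which yields $\mathbb{E}[|u(a,x)-u(b,x)|^2] \leq C|a-b|^{1/2}$ and $\mathbb{E}[|u(a,x)-u(a,y)|^2] \leq C|x-y|$), gives
\begin{align*}
\bigl\|u(t,x) + u(s,y) - u(t,y) - u(s,x)\bigr\|_{L^2(\Omega)} \leq \min\bigl\{2\sqrt{C_T}\,|x-y|^{1/2},\ 2\sqrt{C_T}\,|t-s|^{1/4}\bigr\}.
\end{align*}
Squaring immediately produces the first inequality in \eqref{eq2017-11-16-1}, namely the bound by $C_T (|t-s|^{1/2} \wedge |x-y|)$.

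For the second inequality, I would use the elementary fact that for any $a, b \geq 0$ and $\alpha \in [0,1]$, $a \wedge b \leq a^{1-\alpha} b^{\alpha}$. Applying this with $a = |t-s|^{1/2}$, $b = |x-y|$ and $\alpha = 2\theta \in\, ]0, 1[$ gives
\begin{align*}
|t-s|^{1/2} \wedge |x-y| \leq |t-s|^{(1/2)(1-2\theta)}\, |x-y|^{2\theta} = |t-s|^{1/2 - \theta} |x-y|^{2\theta},
\end{align*}
which is the desired interpolation.

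There is no real obstacle here; the whole argument relies only on Minkowski's inequality, the already-available H\"older estimate \eqref{eq2016-06-14-1}, and a trivial interpolation. One could alternatively represent the rectangular increment as a single Walsh integral of $G(t-r,x,v) - G(t-r,y,v) - G(s-r,x,v) + G(s-r,y,v)$ (on $[0,s]$) plus $G(t-r,x,v) - G(t-r,y,v)$ (on $[s,t]$), and estimate the resulting $L^2$ norm using known Green-kernel bounds; however, the Minkowski route is cleaner and avoids any kernel analysis, so I would use it and relegate the Green-kernel identification to a brief remark if needed.
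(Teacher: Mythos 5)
Your proposal is correct and follows essentially the same route as the paper: the paper also groups the four terms first by common space variable and then by common time variable, applies $(a+b)^2 \le 2(a^2+b^2)$ (the $L^2$ analogue of your Minkowski step) together with \eqref{eq2016-06-14-1}, and treats the interpolation $|t-s|^{1/2}\wedge|x-y| \le |t-s|^{1/2-\theta}|x-y|^{2\theta}$ as immediate. No gaps.
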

\begin{proof}
The second inequality is trivial. To prove the first inequality, we apply the inequality $(a + b)^2 \leq 2(a^2 + b^2)$ to the left-hand side of  \eqref{eq2017-11-16-1}, first grouping the terms with the same space variable, then grouping the terms with the same time variable. Together with \eqref{eq2016-06-14-1}, the first case gives the bound $C_T|t - s|^{1/2}$, and the second case gives the bound $C_T|x - y|$. These two bounds establish the first inequality in \eqref{eq2017-11-16-1}.
\end{proof}

From now on, we fix $\theta \in \, ]0, \frac{1}{2}[$ and set
\begin{align}\label{eq2017-10-04-3}
\theta_1 = \frac{1}{2} - \theta, \quad \theta_2 = 2\theta.
\end{align}
By  the isometry and Lemma \ref{lemma6}, since $Du(t, x) = 1_{\{\cdot < t\}}G(t - \cdot, x, *)$ (here, the notations $\cdot$ and $*$ denote the time variable and space variable respectively), we have
\begin{align}\label{eq1001}
&\|D(u(t, x) + u(s, y) - u(t, y) - u(s, x))\|^2_{\mathscr{H}} \nonumber \\
&\quad = \mbox{E}[(u(t, x) + u(s, y) - u(t, y) - u(s, x))^2]  \leq C_T|t - s|^{\theta_1}|x - y|^{\theta_2},
\end{align}
for any $(t, s, x, y) \in [0, T]^2 \times [0, 1]^2$.

Using Lemma \ref{lemma6}, we establish the following property on the rectangular increment of sample path of the solution.
\begin{lemma}\label{lemma1001}
For any $0 < \xi < \theta_1/2$ and $0 < \eta < \theta_2/2$, there exists a random variable $C$ that is a.s. finite such that a.s., for all $(t, s, x, y) \in [0, T]^2 \times [0, 1]^2$,
\begin{align}\label{eq1002}
|u(t, x) + u(s, y) - u(t, y) - u(s, x)| \leq C|t - s|^{\xi}|x - y|^{\eta}.
\end{align}
\end{lemma}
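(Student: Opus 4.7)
The plan is to bootstrap the second-moment estimate of Lemma \ref{lemma6} into an almost-sure pathwise Hölder bound by exploiting Gaussian hypercontractivity and then invoking a two-parameter Garsia–Rodemich–Rumsey (GRR) argument of the type already used in the paper for the seminorm $\|\cdot\|_{p,\gamma}$. The key preliminary observation is that the rectangular increment
\[
\Delta u(t,s,x,y) := u(t,x) + u(s,y) - u(t,y) - u(s,x)
\]
is a Wiener integral of a deterministic kernel, hence a centered Gaussian random variable. Equivalence of $L^p$-norms for Gaussians, combined with Lemma \ref{lemma6}, gives, for every integer $p \geq 1$,
\[
\mathrm{E}\bigl[|\Delta u(t,s,x,y)|^{2p}\bigr] \leq C_{T,p}\, |t-s|^{p\theta_1}\, |x-y|^{p\theta_2}.
\]

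Given $\xi < \theta_1/2$ and $\eta < \theta_2/2$, I would fix an integer $p$ large enough that $\xi' := \xi + \tfrac{1}{2p} < \theta_1/2$ and $\eta' := \eta + \tfrac{1}{2p} < \theta_2/2$, and introduce the random quantity
\[
\mathcal{N} := \int_{[0,T]^2}\!\int_{[0,1]^2} \frac{|\Delta u(t,s,x,y)|^{2p}}{|t-s|^{1+2p\xi'}\,|x-y|^{1+2p\eta'}}\, dt\,ds\,dx\,dy.
\]
Fubini and the moment bound above yield
\[
\mathrm{E}[\mathcal{N}] \leq C_{T,p}\int_{[0,T]^2}\!|t-s|^{p(\theta_1 - 2\xi')-1}\, dt\,ds \cdot \int_{[0,1]^2}\!|x-y|^{p(\theta_2 - 2\eta')-1}\, dx\,dy,
\]
which is finite since by construction $p(\theta_1 - 2\xi') - 1 > -1$ and $p(\theta_2 - 2\eta') - 1 > -1$. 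Hence $\mathcal{N} < \infty$ almost surely.

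It remains to pass from the finiteness of $\mathcal{N}$ to the pointwise estimate \eqref{eq1002}. For this I would apply the one-dimensional GRR bound $|f(a)-f(b)| \leq c|a-b|^{\gamma - 1/(2p)} \|f\|_{p,\gamma}$ twice. Setting $f_{x,y}(r) := u(r,x) - u(r,y)$, so that $\Delta u(t,s,x,y) = f_{x,y}(t) - f_{x,y}(s)$, GRR in time gives
\[
|\Delta u(t,s,x,y)|^{2p} \leq c\, |t-s|^{2p\xi'-1}\int_{[0,T]^2}\frac{|f_{x,y}(r)-f_{x,y}(r')|^{2p}}{|r-r'|^{1+2p\xi'}}\,dr\,dr'.
\]
For each fixed pair $(r,r')$ the integrand equals $|h_{r,r'}(x) - h_{r,r'}(y)|^{2p}$ with $h_{r,r'}(z) := u(r,z) - u(r',z)$; applying GRR in the space variable to $h_{r,r'}$ and then swapping the order of integration by Fubini bounds the right-hand side by $c^2\,|t-s|^{2p\xi'-1}|x-y|^{2p\eta'-1}\,\mathcal{N}$. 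Taking $2p$-th roots and using $\xi' - 1/(2p) = \xi$, $\eta' - 1/(2p) = \eta$, we obtain $|\Delta u(t,s,x,y)| \leq c\,\mathcal{N}^{1/(2p)}\,|t-s|^{\xi}\,|x-y|^{\eta}$, so $C := c\,\mathcal{N}^{1/(2p)}$ is the a.s.\ finite random constant required by \eqref{eq1002}. The main technical point is the bookkeeping in the iterated GRR / Fubini interchange; the continuity hypotheses needed at each step of GRR are supplied by the joint continuity of $u$ recorded above \eqref{eq2016-06-14-1}.
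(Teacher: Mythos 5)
Your proposal is correct, and it takes a genuinely different route from the paper. The paper first passes through a Kolmogorov continuity argument for the Banach-space-valued process $t \mapsto \hat{u}(t, *)$ with values in the fractional Sobolev space $E_{p, \bar{\gamma}_2}[0,1]$: it bounds $\mathrm{E}[\|\hat{u}(t,*)-\hat{u}(s,*)\|_{p,\bar\gamma_2}^{2p}]$ by $C|t-s|^{\theta_1 p}$, obtains a H\"older-continuous modification $\tilde{u}$ in that norm, applies GRR once in the spatial variable, and then must argue that $\tilde u$ and $\hat u$ are indistinguishable to transfer the bound back to $u$. You instead work entirely pathwise on the jointly continuous version: you upgrade the variance bound of Lemma \ref{lemma6} to $2p$-th moments by Gaussianity, show the single anisotropic Besov functional $\mathcal{N}$ has finite expectation (the exponent bookkeeping $p(\theta_1-2\xi')>0$, $p(\theta_2-2\eta')>0$ is exactly right), and then apply the one-dimensional GRR lemma twice — in time to $r\mapsto u(r,x)-u(r,y)$ and in space to $z\mapsto u(r,z)-u(r',z)$ — noting that GRR holds vacuously when the inner integral is infinite, so no separate finiteness check is needed before integrating. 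This is in effect a two-parameter GRR inequality of the Hu--Le type that the paper itself alludes to in the remark following the lemma. What your approach buys is the elimination of the modification/indistinguishability step and of the Kolmogorov theorem in a Banach space; what the paper's approach buys is the continuity of $t\mapsto \check u(t,*)$ in the norm $\|\cdot\|_{p,\gamma_2}$, a fact it reuses later (see \eqref{eq2018-03-02-1} and the proof of Lemma \ref{lemma11}(b)), so the Banach-space viewpoint is not incidental to the rest of the paper. Both arguments are complete and correct.
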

\begin{remark}
A similar property is established in  \cite[Theorem 5.2]{HuL13}.
\end{remark}
\begin{proof}[Proof of Lemma \ref{lemma1001}]
Denote $\hat{u}(t, x) = u(t, x) - u(t, 0)$, $(t, x) \in [0, T] \times [0, 1]$.
We choose an integer $p$ and $\bar{\gamma}_2 \in \, ]0, 1[$ such that $\xi < \theta_1/2 - \frac{1}{2p}$ and $\eta + \frac{1}{2p} < \bar{\gamma}_2 <  \theta_2/2 - \frac{1}{2p}$.
Let $E_{p, \bar{\gamma}_2}[0, 1]$ be the space of continuous functions defined on $[0, 1]$ vanishing at $0$ and having a finite $\|\cdot\|_{p, \bar{\gamma}_2}$ norm.
Since a.s., for any $t \in [0, T]$, $x \mapsto \hat{u}(t, x)$ is almost $\frac{1}{2}$-H\"{o}lder continuous, we see that $\hat{u}(t, *)$ belongs to $E_{p, \bar{\gamma}_2}$. Moreover, by \eqref{eq2017-11-16-1}, for any $s$, $t \in [0, T]$,
\begin{align*}
\mbox{E}[\|\hat{u}(t, *) - \hat{u}(s, *)\|_{p, \bar{\gamma}_2}^{2p}] &= \int_{[0, 1]^2}\frac{\mbox{E}[|u(t, x) + u(s, y) - u(t, y) - u(s, x)|^{2p}]}{|x - y|^{1 + 2p\bar{\gamma}_2}}dxdy\\
&\leq C_T|t - s|^{\theta_1p}\int_{[0, 1]^2}\frac{|x - y|^{\theta_2p}}{|x - y|^{1 + 2p\bar{\gamma}_2}}dxdy\\
&\leq C_T|t - s|^{\theta_1p}.
\end{align*}
We apply the Kolmogorov continuity theorem (see \cite[Chapter I, Theorem 2.1]{ReY99}) to see that the process $\{\hat{u}(t, *): t \in [0, T]\}$ has a continuous version $\{\tilde{u}(t, *): t \in [0, T]\}$ with values in $E_{p, \bar{\gamma}_2}$, which is $\frac{\theta_1}{2} - \frac{1}{2p} - \epsilon$-H\"{o}lder continuous for small $\epsilon$ such that $\frac{\theta_1}{2} - \frac{1}{2p} - \epsilon > \xi$, namely, there exists a random variable $C$, finite almost surely, such that a.s. for any $s$, $t \in [0, T]$,
 \begin{align*}
 \|\tilde{u}(t, *) - \tilde{u}(s, *)\|_{p, \bar{\gamma}_2} \leq C|t - s|^{\frac{\theta_1}{2} - \frac{1}{2p} - \epsilon}.
 \end{align*}
 Hence we have for any $s$, $t \in [0, T]$,
 \begin{align*}
 \int_{[0, 1]^2}\frac{|\tilde{u}(t, x) - \tilde{u}(s, x) - \tilde{u}(t, y) + \tilde{u}(s, y)|^{2p}}{|x - y|^{1 + 2p\bar{\gamma}_2}}dxdy \leq C|t - s|^{(\frac{\theta_1}{2} - \frac{1}{2p} - \epsilon)2p}.
 \end{align*}
 We apply the Garsia, Rodemich and Rumsey lemma (see \cite[Lemma A.3.1]{Nua06}) to the real-valued function $x \mapsto \tilde{u}(t, x) - \tilde{u}(s, x)$ with $\Psi(x) = x^{2p}$, $p(x) = x^{(1 + 2p\bar{\gamma}_2)/(2p)}$, $d = 1$, to get that for any $(t, s, x, y) \in [0, T]^2 \times [0, 1]^2$,
 \begin{align} \label{eq110}
 |\tilde{u}(t, x) - \tilde{u}(s, x) - \tilde{u}(t, y) + \tilde{u}(s, y)| &\leq C|t - s|^{\frac{\theta_1}{2} - \frac{1}{2p} - \epsilon}|x - y|^{\bar{\gamma}_2 - \frac{1}{2p}} \nonumber\\
 &\leq C|t - s|^{\xi}|x - y|^{\eta}.
 \end{align}
 Letting $y = 0$ in (\ref{eq110}), we obtain
 \begin{align} \label{eq111}
 |\tilde{u}(t, x) - \tilde{u}(s, x)| \leq C|t - s|^{\xi}.
 \end{align}

Fix $(s, y)  \in [0, T] \times [0, 1]$.
Using the triangle inequality,
\begin{align*}
|\tilde{u}(t, x) - \tilde{u}(s, y)| \leq |\tilde{u}(t, x) - \tilde{u}(s, x)| + |\tilde{u}(s, x) - \tilde{u}(s, y)|,
 \end{align*}
which converges to $0$ as $(t, x) \rightarrow (s, y)$ by (\ref{eq111}) and the fact that $x \mapsto \tilde{u}(s, x)$ is continuous since $\tilde{u}(s, *) \in E_{p, \bar{\gamma}_2}$. Therefore, a.s., $(t, x) \mapsto \tilde{u}(t, x)$ is continuous. Together with the fact that for any $t \in [0, T]$, $\mbox{P}\{\hat{u}(t, *) = \tilde{u}(t, *)\} = 1$, we obtain that the processes $\{\hat{u}(t, x): (t, x) \in [0, T] \times [0, 1]\}$ and $\{\tilde{u}(t, x): (t, x) \in [0, T] \times [0, 1]\}$ are indistinguishable and hence (\ref{eq110}) implies (\ref{eq1002}).
\end{proof}

Choose $p_0 \in \mathbb{N}$ and $\gamma_0 \in \mathbb{R}$ such that
\begin{align}\label{eq2016-06-10-1}
p_0 - 2 > \gamma_0 > 4.
\end{align}
Let $\theta_1$ and $\theta_2$ be defined as in \eqref{eq2017-10-04-3}. We assume that $p_0$ is sufficiently large so that there exist $\gamma_1$, $\gamma_2$ such that
\begin{align}\label{eq2018-01-02-1}
\frac{1}{2p_0} < \gamma_1 < \frac{\theta_1}{2} - \frac{1}{2p_0}, \quad  \frac{1}{2p_0} < \gamma_2 < \frac{\theta_2}{2} - \frac{1}{2p_0},
\end{align}
and
\begin{align}\label{eq2018-01-02-2}
2\gamma_1 + \gamma_2 = \frac{\gamma_0 - 1}{2p_0};
\end{align}
see Figure \ref{figure4}.
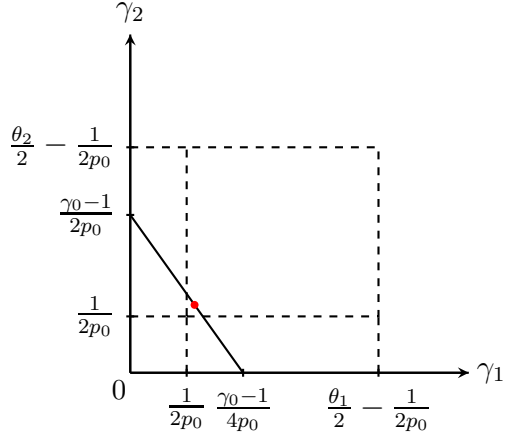
\begin{figure}
\centering {
\begin{tikzpicture}[>=stealth,scale=1.5,line width=0.8pt]

% % % % % % % % % % % % % % %

\pgfmathsetmacro{\ticker}{0.125} %ticker???
%?????

\coordinate (A) at (0,0);
\draw (-0.1,-0.15) node {{\small$0$}};
\coordinate (B) at (0,3);
\coordinate (C) at (3,3);
\coordinate (D) at (3,0);
\draw(A)--(B)--cycle;
\draw(A)--(D)--cycle;
%??xy
\draw[->] (0, 0) -- (0, 3);
\draw[->] (0, 0) -- (3, 0);
\draw[dashed](0.5,0.5) rectangle (2.2, 2.0);
\draw (0,3.2) node { $\large{\gamma_2}$};

\draw[dashed] (0.5, 0) -- (0.5, 0.5);
\draw[dashed] (0, 0.5) -- (0.5, 0.5);
\draw[dashed] (2.2, 0) -- (2.2, 0.5);
\draw[dashed] (0, 2) -- (0.5, 2);

\draw (3.2,0) node { $\large{\gamma_1}$};

{\draw (0.5, 1pt) -- (0.5, -1pt) node[anchor=north] {$\frac{1}{2p_0}$};}
{\draw (1, 1pt) -- (1, -1pt) node[anchor=north] {$\frac{\gamma_0 - 1}{4p_0}$};}
{\draw (2.2, 1pt) -- (2.2, -1pt) node[anchor=north] {$\frac{\theta_1}{2} - \frac{1}{2p_0}$};}
{\draw (1pt,0.5) -- (-1pt,0.5) node[anchor=east] {$\frac{1}{2p_0}$};}
\draw (0, 1.4) -- (1, 0);

{\draw (1pt,1.4) -- (-1pt,1.4) node[anchor=east] {$\frac{\gamma_0 - 1}{2p_0}$};}
{\draw (1pt,2) -- (-1pt,2) node[anchor=east]  {$\frac{\theta_2}{2} - \frac{1}{2p_0}$};}
\fill[red](0.57,0.602) circle (1pt);

\end{tikzpicture}
}%
\caption{Illustration of \eqref{eq2018-01-02-1} and \eqref{eq2018-01-02-2}}\label{figure4}
\end{figure}

We now define two families of random variables $\{Y_{r}: r \in [s_0, s_0 + \delta_1]\}$ and $\{\bar{Y}_r: r \in [0, \Delta_{\bullet}]\}$, which will be used in Lemma  \ref{lemma11} below to  control respectively the value of the supremum of the solution over a time interval and over a space-time rectangle.
For $r \in [s_0, s_0 + \delta_1]$, define
\begin{align}\label{eq2017-09-28-1}
Y_{r} &:= \int_{[s_0, r]^2}\frac{(u(t, y_0) - u(s, y_0))^{2p_0}}{|t - s|^{\gamma_0/2}}dsdt.
\end{align}
Denote
\begin{align}\label{eq2018-01-03-1}
\delta: = \delta_1^{1/2} + \delta_2, \quad \Delta_{\bullet} : = \delta^2 \quad \mbox{and}\quad \Delta_{*}: = \delta \wedge (1 - y_0).
\end{align}
For $r \in [0, \Delta_{\bullet}]$, we define
\begin{align} \label{eq2018-01-03-2}
\bar{Y}_r := Y_0(r) + Y_1(r),
\end{align}
where
\begin{align}\label{eq2017-09-28-100}
Y_0(r) &:= \int_{[0, r]^2}\frac{(u(t, y_0) - u(s, y_0))^{2p_0}}{|t - s|^{\gamma_0/2}}dsdt,
\end{align}
and
\begin{align} \label{eq2018-10-10-1}
Y_1(r) &:= \int_{[0, r]^2}dtds \int_{[y_0, y_0 + \Delta_*]^2}dxdy\frac{(u(t, x) + u(s, y) - u(t, y) - u(s, x))^{2p_0}}{|t - s|^{1 + 2p_0\gamma_1}|x - y|^{1 + 2p_0\gamma_2}}.
\end{align}

\begin{lemma}\label{lemma2018-10-09-1}
\begin{itemize}
  \item [(a)] For any $p \geq 1$, there exists a constant $c_p$, not depending on $(s_0, y_0) \in [0, T] \times [0, 1]$, such that for all $r \in [s_0, s_0 + \delta_1]$,
      \begin{align}\label{eq2018-01-03-6}
\mbox{E}[|Y_{r}|^p] &\leq c_p \, (r - s_0)^{2p}\delta_1^{(p_0 - \gamma_0)p/2}.
\end{align}
  \item [(b)] For any $p \geq 1$, there exists a constant $c_p$, not depending on $y_0 \in [0, 1]$,  such that for any $r \in [0, \Delta_{\bullet}]$,
\begin{align} \label{eq2018-01-03-8}
\mbox{E}[|\bar{Y}_r|^p] &\leq c_p \, r^{2p}\delta^{p(p_0 - \gamma_0)}.
\end{align}
\end{itemize}
\end{lemma}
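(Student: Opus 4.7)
My plan is to bound the $L^p(\Omega)$-norms of $Y_r$, $Y_0(r)$ and $Y_1(r)$ by applying Minkowski's integral inequality to pull $\|\cdot\|_p$ inside the deterministic integrals defining these quantities, then using Gaussianity of the increments of $u$ (so that all $L^q$ norms are controlled by the $L^2$ norm with constants depending only on $q$) together with \eqref{eq2016-06-14-1} for $Y_r$ and $Y_0(r)$, and \eqref{eq1001} from Lemma \ref{lemma6} for $Y_1(r)$. Since the target estimates have the form $\mathrm{E}[|\cdot|^p]\le c_p(r-s_0)^{2p}\delta_1^{\cdots}$ or $c_p r^{2p}\delta^{\cdots}$, it suffices to produce a deterministic bound of the form $c(r-s_0)^{2}\delta_1^{\cdots}$ or $cr^{2}\delta^{\cdots}$ on the $L^p$-norm.

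For part (a), Minkowski's inequality gives
\begin{align*}
\|Y_r\|_p \le \int_{[s_0,r]^2}\frac{\|(u(t,y_0)-u(s,y_0))^{2p_0}\|_p}{|t-s|^{\gamma_0/2}}\,ds\,dt,
\end{align*}
and Gaussianity combined with \eqref{eq2016-06-14-1} yields $\|(u(t,y_0)-u(s,y_0))^{2p_0}\|_p \le C |t-s|^{p_0/2}$. Since $p_0-\gamma_0>2$ by \eqref{eq2016-06-10-1}, the resulting integrand $|t-s|^{(p_0-\gamma_0)/2}$ is bounded by $\delta_1^{(p_0-\gamma_0)/2}$ on $[s_0,r]^2\subset[s_0,s_0+\delta_1]^2$, and integrating over a region of area $(r-s_0)^2$ gives $\|Y_r\|_p \le C\,\delta_1^{(p_0-\gamma_0)/2}(r-s_0)^2$, which is \eqref{eq2018-01-03-6}.

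For part (b), I would split $\bar Y_r=Y_0(r)+Y_1(r)$ and treat each summand. The analysis of $Y_0(r)$ is identical to that of $Y_r$, with $[s_0,r]$ replaced by $[0,r]$ and $\delta_1$ replaced by $\Delta_\bullet=\delta^2$, yielding $\|Y_0(r)\|_p \le C r^2\delta^{p_0-\gamma_0}$. For $Y_1(r)$, Minkowski, Gaussianity and \eqref{eq1001} give
\begin{align*}
\|Y_1(r)\|_p \le C\int_{[0,r]^2}|t-s|^{\theta_1 p_0-1-2p_0\gamma_1}\,dt\,ds\int_{[y_0,y_0+\Delta_*]^2}|x-y|^{\theta_2 p_0-1-2p_0\gamma_2}\,dx\,dy,
\end{align*}
where both exponents are strictly positive by \eqref{eq2018-01-02-1}. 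Evaluating these two integrals explicitly, then bounding $r\le\delta^2$ in the leftover powers of $r$ beyond $r^2$ and $\Delta_*\le\delta$ in the spatial factor, produces a net exponent on $\delta$ equal to
\begin{align*}
2(\theta_1 p_0-1-2p_0\gamma_1)+(\theta_2 p_0-1-2p_0\gamma_2)+2 = (2\theta_1+\theta_2)p_0 - 2p_0(2\gamma_1+\gamma_2) - 1,
\end{align*}
which collapses to $p_0-\gamma_0$ via the identity $2\theta_1+\theta_2=1$ coming from \eqref{eq2017-10-04-3} and the relation \eqref{eq2018-01-02-2} stipulating $2\gamma_1+\gamma_2=(\gamma_0-1)/(2p_0)$. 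This gives $\|Y_1(r)\|_p\le Cr^2\delta^{p_0-\gamma_0}$, and summing the two bounds yields \eqref{eq2018-01-03-8}.

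The exponent bookkeeping in part (b) is the only delicate point: everything else is routine Minkowski-plus-Gaussian-moment estimation, uniform in $(s_0,y_0)$ and $y_0$ respectively since \eqref{eq2016-06-14-1} and \eqref{eq1001} are. The parameter constraints \eqref{eq2018-01-02-1}--\eqref{eq2018-01-02-2} were evidently engineered precisely so that both integrands in the formula for $\|Y_1(r)\|_p$ remain integrable while the combined exponent on $\delta$ lands at exactly $p_0-\gamma_0$, matching the exponent coming out of $Y_0(r)$.
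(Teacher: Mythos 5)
Your proof is correct and follows essentially the same route as the paper: reduce to the pointwise moment bounds \eqref{eq2016-06-14-1} and \eqref{eq1001} (via Gaussianity) and then carry out the same deterministic exponent bookkeeping using $2\theta_1+\theta_2=1$ and \eqref{eq2018-01-02-2}. The only difference is that you pull the $L^p$-norm inside the integrals with Minkowski's integral inequality whereas the paper uses H\"older's inequality with the measure of the integration domain; the two devices are interchangeable here and yield identical estimates.
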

\begin{proof}
We first prove (a). By H\"{o}lder's inequality and \eqref{eq2016-06-14-1}, for any $p \geq 1$,
\begin{align*}
\mbox{E}[|Y_{r}|^p] &\leq (r - s_0)^{2(p - 1)}\int_{[s_0, r]^2}\frac{\mbox{E}[|u(t, y_0) - u(s, y_0)|^{2p_0p}]}{|t - s|^{\gamma_0p/2}}dsdt \nonumber \\
& \leq c_p \, (r - s_0)^{2(p - 1)}\int_{[s_0, r]^2}\frac{|t - s|^{p_0p/2}}{|t - s|^{\gamma_0p/2}}dsdt \leq c_p \, (r - s_0)^{2p}\delta_1^{(p_0 - \gamma_0)p/2},
\end{align*}
where the constant $c_p$ does not depend on $(s_0, y_0) \in [0, T] \times [0, 1]$.

To prove (b), it suffices to estimate the moments of $Y_1(r)$ since the estimate of the moments of $Y_0(r)$ follows along the same lines as that of $Y_r$. By H\"{o}lder's inequality and \eqref{eq1001}, for any $p \geq 1$, there exists a constant $c_p$, not depending on $y_0 \in [0, 1]$, such that for any $r \in [0, \Delta_{\bullet}]$,
\begin{align} \label{eq2018-01-03-7}
\mbox{E}[|Y_1(r)|^p] &\leq (r\Delta_*)^{2(p - 1)}\int_{[0, r]^2}dtds \int_{[y_0, y_0 + \Delta_*]^2}dxdy \nonumber \\
& \qquad \qquad \qquad \qquad \qquad \qquad\times \frac{\mbox{E}[|u(t, x) + u(s, y) - u(t, y) - u(s, x)|^{2p_0p}]}{|t - s|^{p(1 + 2p_0\gamma_1)}|x - y|^{p(1 + 2p_0\gamma_2)}}\nonumber \\
& \leq c_p \, (r\Delta_*)^{2(p - 1)}\int_{[0, r]^2}dtds \int_{[y_0, y_0 + \Delta_*]^2}dxdy \frac{|t - s|^{p_0p\theta_1}|x - y|^{p_0p\theta_2}}{|t - s|^{p(1 + 2p_0\gamma_1)}|x - y|^{p(1 + 2p_0\gamma_2)}}\nonumber \\
& \leq c_p \, (r\Delta_*)^{2p} \Delta_{\bullet}^{p(p_0\theta_1 - (1 + 2p_0\gamma_1))}\Delta_*^{p(p_0\theta_2 - (1 + 2p_0\gamma_2))}\nonumber \\
& \leq c_p \, r^{2p} \delta^{p(2p_0\theta_1 - 2(1 + 2p_0\gamma_1))}\delta^{p(p_0\theta_2 - (1 + 2p_0\gamma_2) + 2)}\nonumber \\
& = c_p \, r^{2p}\delta^{p(p_0(2\theta_1 + \theta_2) - 2p_0 (2\gamma_1 + \gamma_2) - 1)}  = c_p \, r^{2p}\delta^{p(p_0 - \gamma_0)},
\end{align}
where in the last inequality we use \eqref{eq2018-01-03-1}, and in the second equality we use \eqref{eq2018-01-02-2} and the fact that $2\theta_1 + \theta_2 = 1$ from the definition of $\theta_1$, $\theta_2$ in \eqref{eq2017-10-04-3}.
\end{proof}

The random variable $Y_1(r)$ defined in \eqref{eq2018-10-10-1} has another representation in terms of the H\"{o}lder seminorm $\|\cdot\|_{p_0, \gamma_2}$. Indeed, we will write, for $(t, x) \in [0, T] \times [0, 1]$,
\begin{align}
u(t, x) = \check{u}(t, x) + u(t, y_0),
\end{align}
where
\begin{align} \label{eq2018-01-16-1}
\check{u}(t, x) = u(t, x) - u(t, y_0).
\end{align}
Since for any $\epsilon > 0$, a.s., for any fixed $t$, the function $x \mapsto \check{u}(t, x)$ is $\frac{1}{2}-\epsilon$-H\"{o}lder continuous, it follows that $\check{u}(t, *)$ belongs to the Banach space $E_{p_0, \gamma_2}[y_0, y_0 + \Delta_*]$ (the space of continuous functions defined on $[y_0, y_0 + \Delta_*]$ vanishing at $y_0$ and having a finite $\|\cdot\|_{p_0, \gamma_2}$ norm) with $p_0, \, \gamma_2$ as defined in \eqref{eq2016-06-10-1} and \eqref{eq2018-01-02-1}.
We can write, for $r \in [0, \Delta_{\bullet}]$,
\begin{align}\label{eq2017-10-05-100}
Y_1(r) &= \int_{[0, r]^2}dtds \int_{[y_0, y_0 + \Delta_*]^2}dxdy \, \frac{(u(t, x) + u(s, y) - u(t, y) - u(s, x))^{2p_0}}{|t - s|^{1 + 2p_0\gamma_1}|x - y|^{1 + 2p_0\gamma_2}} \nonumber \\
&=  \int_{[0, r]^2}\frac{\|\check{u}(t, *) - \check{u}(s, *)\|_{p_0, \gamma_2}^{2p_0}}{|t - s|^{1 + 2p_0\gamma_1}}dtds.
\end{align}

Moreover, choose $\xi, \, \eta$ as in Lemma \ref{lemma1001} such that $\eta > \gamma_2 + 1/(2p_0)$, which is possible by \eqref{eq2018-01-02-1}. Then, by \eqref{eq1002},
\begin{align}\label{eq2018-03-02-1}
\|\check{u}(t, *) - \check{u}(s, *)\|_{p_0, \gamma_2}^{2p_0} & = \int_{[y_0, y_0 + \Delta_*]^2}\frac{(u(t, x) + u(s, y) - u(t, y) - u(s, x))^{2p_0}}{|x - y|^{1 + 2p_0\gamma_2}}dxdy \nonumber \\
& \leq C|t - s|^{2p_0\xi}\int_{[y_0, y_0 + \Delta_*]^2}|x - y|^{2p_0\eta - 1 - 2p_0\gamma_2}dxdy \nonumber \\
& \leq C|t - s|^{2p_0\xi}
\end{align}
since $2p_0\eta - 1 - 2p_0\gamma_2 > 0$, which shows that a.s., $t \mapsto \check{u}(t, *)$ is continuous in $E_{p_0, \gamma_2}[y_0, y_0 + \Delta_*]$.

The following result shows that the two families of random variables $\{Y_{r}: r \in [s_0, s_0 + \delta_1]\}$ and $\{\bar{Y}_r: r \in [0, \Delta_{\bullet}]\}$ can control respectively the value of the supremum of the solution over a time interval and over a space-time rectangle.
\begin{lemma} \label{lemma11}
\begin{itemize}
  \item [(a)] There exists a finite positive constant $c$, not depending on $(s_0, y_0) \in [0, T] \times [0, 1]$, such that for any $a > 0$, for all $\delta_1 > 0$ and for all $r \in [s_0, s_0 + \delta_1]$,
\begin{align} \label{eq2017-10-12-5}
Y_{r} \leq R:= c\, a^{2p_0}\delta_1^{-(\gamma_0 - 4)/2} \quad \Rightarrow \quad \sup_{
 t \in [s_0, r]
}|\bar{u}(t, y_0)| \leq a.
\end{align}
  \item [(b)]There exists a finite positive constant $c$, not depending on $y_0 \in [0, 1]$, such that for any $\bar{a} > 0$, $\delta_1 > 0$, $\delta_2 > 0$ and for all $r \in [0, \Delta_{\bullet}]$,
\begin{align} \label{eq2017-10-12-500}
\bar{Y}_{r} \leq \bar{R}:= c\, \bar{a}^{2p_0}\delta^{4 -\gamma_0} \quad \Rightarrow \quad \sup_{
 (t,x) \in [0, r] \times [y_0, y_0 + \delta_2]
 }|u(t, x)| \leq \bar{a}.
\end{align}
\end{itemize}
\end{lemma}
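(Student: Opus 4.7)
The plan is to apply the Garsia--Rodemich--Rumsey lemma (\cite[Lemma A.3.1]{Nua06}, already exploited in Lemma \ref{lemma1001}). The quantities $Y_r$, $Y_0(r)$, and $Y_1(r)$ are tailor-made so that, after choosing $\Psi(x) = x^{2p_0}$ together with a suitable $p$, each plays the role of the ``$B$'' in GRR, and the GRR conclusion translates these integrals directly into uniform control of the desired supremum.

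For part (a), take $f(t) = u(t, y_0)$ on $[s_0, r]$ and pick $p(u) = u^{\gamma_0/(4p_0)}$, so that $p(u)^{2p_0} = u^{\gamma_0/2}$ and $B = Y_r$. The resulting GRR estimate involves $\int_0^{|t-s|} u^{\gamma_0/(4p_0) - 1/p_0 - 1}\,du$, which converges precisely because $\gamma_0 > 4$ by \eqref{eq2016-06-10-1} and evaluates to a constant multiple of $|t-s|^{(\gamma_0-4)/(4p_0)}$. Setting $s = s_0$ and bounding $r - s_0 \leq \delta_1$ gives
\begin{align*}
\sup_{t \in [s_0, r]}|\bar u(t, y_0)| \leq C\, Y_r^{1/(2p_0)}\, \delta_1^{(\gamma_0-4)/(4p_0)};
\end{align*}
substituting the hypothesized bound $Y_r \leq c\, a^{2p_0}\delta_1^{-(\gamma_0-4)/2}$ cancels the $\delta_1$-powers exactly, leaving $C c^{1/(2p_0)} a$, and one chooses $c = c(p_0, \gamma_0)$ small enough to absorb $C$.

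Part (b) splits via the decomposition $u(t,x) = \check u(t,x) + u(t, y_0)$ from \eqref{eq2018-01-16-1}. The summand $u(\cdot, y_0)$ is handled by applying (a) on $[0, r]$ with the initial condition $u(0, y_0) = 0$ and $r \leq \Delta_\bullet = \delta^2$, yielding $\sup_{t \in [0, r]}|u(t, y_0)| \leq C\, Y_0(r)^{1/(2p_0)}\, \delta^{(\gamma_0-4)/(2p_0)}$. For $\check u$, I would invoke the key representation \eqref{eq2017-10-05-100} to recast $Y_1(r)$ as a GRR-type double integral in time for the $E_{p_0, \gamma_2}$-valued path $t \mapsto \check u(t, *)$, whose continuity is established in \eqref{eq2018-03-02-1}. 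Applying GRR (valid in the separable Banach space $E_{p_0, \gamma_2}$) with $\Psi(x) = x^{2p_0}$ and $p(u) = u^{1/(2p_0) + \gamma_1}$ — convergence requires $\gamma_1 > 1/(2p_0)$ from \eqref{eq2018-01-02-1} — together with $\check u(0, *) \equiv 0$, bounds $\|\check u(t, *)\|_{p_0, \gamma_2}$ by $C\, Y_1(r)^{1/(2p_0)} \Delta_\bullet^{\gamma_1 - 1/(2p_0)}$. The H\"older embedding noted just after \eqref{eq11}, combined with $\check u(t, y_0) = 0$, converts this seminorm bound into a pointwise estimate with an extra factor $\Delta_*^{\gamma_2 - 1/(2p_0)}$. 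Using $\Delta_* \leq \delta$, the total $\delta$-exponent is $2\gamma_1 + \gamma_2 - 3/(2p_0)$, which by the arithmetic identity \eqref{eq2018-01-02-2} simplifies to $(\gamma_0-4)/(2p_0)$, matching the exponent from $Y_0(r)$. Summing the two pieces and using $x^{1/(2p_0)} + y^{1/(2p_0)} \leq 2(x+y)^{1/(2p_0)}$ yields $\sup_{(t,x)}|u(t,x)| \leq C\, \bar Y_r^{1/(2p_0)}\, \delta^{(\gamma_0-4)/(2p_0)}$, and substituting $\bar Y_r \leq \bar R$ with $c$ chosen small enough gives $\bar a$.

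The main delicate point is the bookkeeping: verifying that the spatial factor $\delta^{\gamma_2 - 1/(2p_0)}$, the temporal factor $\delta^{2(\gamma_1 - 1/(2p_0))}$, and the exponent $(\gamma_0 - 4)/(2p_0)$ appearing in $\bar R$ all align into a single $\delta$-power that cancels cleanly. This is exactly why the paper imposes the arithmetic relation \eqref{eq2018-01-02-2}: it is the unique constraint that makes the GRR exponents from the $Y_0$ and $Y_1$ terms agree, so that a common threshold $\bar R \propto \delta^{4-\gamma_0}$ suffices.
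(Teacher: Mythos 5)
Your proposal is correct and follows essentially the same route as the paper: the Garsia--Rodemich--Rumsey lemma applied to $t \mapsto u(t,y_0)$ for part (a) and for the $Y_0$-term of part (b), followed by the two-stage argument for $Y_1$ (GRR for the $E_{p_0,\gamma_2}$-valued path in time via \eqref{eq2017-10-05-100}, then the H\"older embedding in space), with the exponent bookkeeping resolved exactly as in the paper through \eqref{eq2018-01-02-2}. The only cosmetic differences are your use of the Euclidean-metric parameterization of GRR in (a) where the paper uses the metric $\rho(t,s)=|t-s|^{1/2}$, and your citing of the embedding stated after \eqref{eq11} where the paper performs a second explicit GRR application — these are equivalent.
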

\begin{proof}
We first prove (a). We apply the Garsia, Rodemich, and Rumsey lemma (see \cite[Proposition A.1]{DKN07}) with
\begin{align*}
& S := [s_0, r], \,\, \rho(t, s) := |t - s|^{1/2}, \,\, \mu(dt) := dt, \\
&\Psi(x) := x^{2p_0}, \,\, p(x) := x^{\frac{\gamma_0}{2p_0}} \quad \mbox{and}  \quad f := u(\cdot, y_0).
\end{align*}
By \cite[(A.2)]{DKN07}, we deduce that for all $t$, $s \in [s_0, r]$,
\begin{align*}
|u(t, y_0) - u(s, y_0)| &\leq 10  \int_0^{2\rho(t, s)}\frac{Y_{r}^{\frac{1}{2p_0}}}{[\mu(B_{\rho}(s, u/4))]^{1/p_0}}u^{\frac{\gamma_0}{2p_0} - 1}du   \\
&\leq c_1 \, Y_{r}^{\frac{1}{2p_0}}\int_0^{2\rho(t, s)}u^{-\frac{2}{p_0}}u^{\frac{\gamma_0}{2p_0} - 1}du   = c_2 \,|t - s|^{\frac{\gamma_0 - 4}{4p_0}}Y_{r}^{\frac{1}{2p_0}}\leq  c_2 \,\delta_1^{\frac{\gamma_0 - 4}{4p_0}}Y_{r}^{\frac{1}{2p_0}},
\end{align*}
where we have used \eqref{eq2016-06-10-1}; the constants $c_1, \, c_2$ do not depend on $r$, nor on $(s_0, y_0) \in [0, T] \times [0, 1]$.
Assuming $Y_{r} \leq R$, letting $s = s_0$ in the above inequality and choosing a suitable constant in the definition of $R$, we obtain that
\begin{equation*}
\sup_{ t \in [s_0, r]
}|\bar{u}(t, y_0)| \leq \delta_1^{(\gamma_0 - 4)/(4p_0)}(a^{2p_0}\delta_1^{-(\gamma_0 - 4)/2})^{1/(2p_0)} = a.
\end{equation*}

We now prove (b). Assuming $Y_0(r) \leq \bar{R}$, similar to the proof of (a), by the Garsia, Rodemich and Rumsey lemma (see \cite[Proposition A.1]{DKN07}), we deduce that for all $t$, $s \in [0, r]$,
\begin{align}\label{eq2018-01-03-3}
|u(t, y_0) - u(s, y_0)| &\leq  c' \,|t - s|^{\frac{\gamma_0 - 4}{4p_0}}Y_0(r)^{\frac{1}{2p_0}}\leq  c_1 \,\Delta_{\bullet}^{\frac{\gamma_0 - 4}{4p_0}}Y_0(r)^{\frac{1}{2p_0}} = c_1 \,\delta^{\frac{\gamma_0 - 4}{2p_0}}Y_0(r)^{\frac{1}{2p_0}},
\end{align}
where the constant $c_1$ does not depend on $r$, nor on $y_0 \in [0, 1]$. Letting $s = 0$ in \eqref{eq2018-01-03-3}, we obtain
\begin{align}\label{eq2018-01-03-4}
\sup_{t \in [0, r]} |u(t, y_0)| \leq c_1 \,\delta^{\frac{\gamma_0 - 4}{2p_0}}Y_0(r)^{\frac{1}{2p_0}} \leq c_1 \,\delta^{\frac{\gamma_0 - 4}{2p_0}}\bar{R}^{\frac{1}{2p_0}}.
\end{align}
Hence we can choose a suitable constant $c$ in the definition of $\bar{R}$ in \eqref{eq2017-10-12-500} so that
\begin{align}\label{eq2018-01-03-5}
\sup_{t \in [0, r]} |u(t, y_0)| \leq \bar{a}/2.
\end{align}

Assuming $Y_1(r) \leq \bar{R}$, from the expression of $Y_1(r)$ in \eqref{eq2017-10-05-100}, we first apply the Garsia, Rodemich, and Rumsey lemma (see \cite[Lemma A.3.1]{Nua06}) to the $E_{p_0, \gamma_2}[y_0, y_0 + \Delta_*]$-valued function $s \mapsto \check{u}(s, *)$ with $\Psi(x) = x^{2p_0}$, $p(x) = x^{(1 + 2p_0\gamma_1)/(2p_0)}$ to deduce that there exists a constant $c_2$ such that for all $t$, $s \in [0, r]$,
\begin{align}\label{eq2018-03-01-8}
\|\check{u}(t, *) - \check{u}(s, *)\|_{p_0, \gamma_2} &\leq  c' \, Y_1(r)^{\frac{1}{2p_0}} \int_0^{2|t - s|}x^{-\frac{1}{p_0}}x^{\frac{1 + 2p_0\gamma_1}{2p_0} - 1}dx \nonumber \\
& = c_2 \, Y_1(r)^{\frac{1}{2p_0}}|t - s|^{\frac{2p_0\gamma_1 -1}{2p_0}}\nonumber\\
& \leq c_2 \, Y_1(r)^{\frac{1}{2p_0}}\Delta_{\bullet}^{\frac{2p_0\gamma_1 -1}{2p_0}} = c_2 \, Y_1(r)^{\frac{1}{2p_0}}\delta^{\frac{2(2p_0\gamma_1 -1)}{2p_0}}.
\end{align}
Letting $s = 0$, we obtain for all $t \in [0, r]$,
\begin{align*}
\|\check{u}(t, *)\|_{p_0, \gamma_2}^{2p_0} &\leq  c_2 \, Y_1(r)\delta^{2(2p_0\gamma_1 -1)}.
\end{align*}
Applying the same lemma to the real-valued function $x \mapsto \check{u}(t, x)$ ($t$ is now fixed) with $\Psi(x) = x^{2p_0}$, $p(x) = x^{(1 + 2p_0\gamma_2)/(2p_0)}$, we obtain
\begin{align*}
|\check{u}(t, x) - \check{u}(t, y)| \leq c_3 \, Y_1(r)^{\frac{1}{2p_0}}\delta^{\frac{2(2p_0\gamma_1 -1)}{2p_0}} |x - y|^{\frac{2p_0\gamma_2 -1}{2p_0}},
\end{align*}
for all $x$, $y \in [y_0, y_0 + \Delta_*]$. Letting $y = y_0$ we obtain that for all $(t, x) \in [0, r] \times [y_0, y_0 + \Delta_*]$,
\begin{align*}
|u(t, x) - u(t, y_0)| & \leq c_3 \, Y_1(r)^{\frac{1}{2p_0}}\delta^{\frac{2(2p_0\gamma_1 -1)}{2p_0}} \Delta_*^{\frac{2p_0\gamma_2 -1}{2p_0}}\\
&\leq  c_3 \, Y_1(r)^{\frac{1}{2p_0}}\delta^{\frac{2(2p_0\gamma_1 -1)}{2p_0}} \delta^{\frac{2p_0\gamma_2 -1}{2p_0}} = c_3 \, Y_1(r)^{\frac{1}{2p_0}} \delta^{\frac{\gamma_0 - 4}{2p_0}},
\end{align*}
where in the second inequality we use \eqref{eq2018-01-03-1}, and the equality is due to \eqref{eq2018-01-02-2}.
In particular, this implies that
\begin{align}\label{eq2018-01-02-300}
\sup_{(t, x) \in [0, r] \times [y_0, y_0 + \delta_2]}|u(t, x) - u(t, y_0)| &\leq  c_3 \, Y_1(r)^{\frac{1}{2p_0}} \delta^{\frac{\gamma_0 - 4}{2p_0}}.
\end{align}
We can choose the constant $c$ in the definition of $\bar{R}$ in \eqref{eq2017-10-12-500} small so that \eqref{eq2018-01-03-5} holds and
\begin{align}\label{eq2018-01-02-30011}
\sup_{(t, x) \in [0, r] \times [y_0, y_0 + \delta_2]}|u(t, x) - u(t, y_0)| &\leq  \bar{a}/2.
\end{align}
Hence, by \eqref{eq2018-01-03-5}, \eqref{eq2018-01-02-30011} and the triangle inequality, we obtain \eqref{eq2017-10-12-500}.
\end{proof}

We conclude this section by introducing a result on the uniqueness of the solution to the heat equation with boundary conditions, which will be used when we check the condition (iii) of Theorem \ref{th2017-08-24-2}.

Let $f: [0, \infty[ \mapsto \mathbb{R}$ be a differentiable function with continuous derivative satisfying $f(0) = 0$. Let $g \in C^{\infty}([0, 1])$ satisfy the same boundary conditions as the Green kernel.  We define
\begin{align*}
& A(t, x) = \int_0^t\int_0^1  G(t - r, x, v)\left(\frac{\partial}{\partial r} - \frac{\partial}{\partial v^2}\right)(f(r)g(v))dvdr, \quad t > 0, \, x \in [0, 1],\\
&  A(0, x) = 0, \quad x \in [0, 1].
\end{align*}

\begin{lemma} \label{lemma2017-11-27-1}
The function $A$ is well-defined and we have $A(t, x) = f(t)g(x)$ for all $(t, x) \in [0, \infty[ \, \times [0, 1]$.
\end{lemma}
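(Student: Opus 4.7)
The plan is to verify that $A$ is well-defined and then to establish the pointwise identity $A(t,x) = f(t)g(x)$ by two integrations by parts, exploiting the heat equation satisfied by $G$ and the boundary-condition hypothesis on $g$.

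First, I would handle well-definedness. Since $f \in C^1([0,\infty[)$ and $g, g'' \in C([0,1])$, the function $(r,v) \mapsto f'(r)g(v) - f(r)g''(v)$ is bounded, by some constant $C_t$, on $[0,t] \times [0,1]$. As $\int_0^1 G(s,x,v)\, dv \leq 1$ for both Neumann and Dirichlet boundary conditions, one gets $|A(t,x)| \leq C_t\, t < \infty$, so the integral converges absolutely.

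Next, for the identity, I would pick $\epsilon \in\, ]0, t[$ and integrate by parts in $r$ on $[0, t-\epsilon]$. Using $f(0)=0$ and the identity $\partial_r G(t-r, x, v) = -\partial_{vv} G(t-r, x, v)$ (which follows from the fact that $G$ satisfies the heat equation in its first spatial variable together with the symmetry $G(s,x,v)=G(s,v,x)$ of the self-adjoint Green kernel), I obtain
$$\int_0^{t-\epsilon} G(t-r, x, v) f'(r)\, dr = G(\epsilon, x, v) f(t-\epsilon) + \int_0^{t-\epsilon} \partial_{vv} G(t-r, x, v) f(r)\, dr.$$
Multiplying by $g(v)$, integrating in $v$, and integrating by parts twice in $v$ in the last term, the boundary terms at $v=0,1$ vanish because $g$ and $v \mapsto G(s,x,v)$ satisfy the same boundary conditions (Dirichlet: both vanish at the endpoints; Neumann: both have vanishing $v$-derivative at the endpoints). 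This yields
$$\int_0^{t-\epsilon}\!\!\int_0^1 G(t-r, x, v)\bigl[f'(r)g(v) - f(r)g''(v)\bigr] dv\, dr = f(t-\epsilon) \int_0^1 G(\epsilon, x, v) g(v)\, dv.$$

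Finally, I would let $\epsilon \downarrow 0$. The left-hand side converges to $A(t,x)$ by dominated convergence, using the dominating function from the well-definedness step. On the right, $f(t-\epsilon) \to f(t)$ by continuity, and $\int_0^1 G(\epsilon, x, v) g(v)\, dv \to g(x)$ by the initial-condition property of the heat semigroup applied to the continuous function $g$ (the case $x\in\{0,1\}$ for Dirichlet is compatible because $g(0)=g(1)=0$). This gives $A(t,x) = f(t)g(x)$. The main delicate point I anticipate is the cancellation of the boundary terms in the $v$-integration by parts, which is precisely where the assumption that $g$ satisfies the same boundary conditions as the Green kernel is essential; the rest of the argument is routine given the smoothness of $f$ and $g$ and the standard semigroup properties of $G$.
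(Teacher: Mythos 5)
Your argument is correct, but it takes a genuinely different route from the paper's. The paper treats $A$ as the Duhamel representation of the solution to the inhomogeneous heat equation with source $\left(\frac{\partial}{\partial t} - \frac{\partial}{\partial x^2}\right)(f(t)g(x))$, vanishing initial condition and the same boundary conditions as $G$; since $f(t)g(x)$ solves the very same initial-boundary value problem ($f(0)=0$ gives the initial condition, the hypothesis on $g$ gives the boundary conditions), the identity $A=fg$ follows in one line from the uniqueness theorem for the heat equation on bounded domains cited from Evans. You instead prove the identity by direct computation: truncating at $t-\epsilon$ to avoid the singularity of $G$, integrating by parts in $r$, converting $\partial_r G$ into $\partial_{vv}G$ via the heat equation and the symmetry of the Green kernel, integrating by parts twice in $v$ (where the matching boundary conditions are exactly what kills the boundary terms), and passing to the limit $\epsilon\downarrow 0$ using the approximate-identity property of $G(\epsilon,x,\cdot)$. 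Your route is more self-contained, replacing the appeal to a PDE uniqueness theorem by elementary calculus and standard kernel properties, at the cost of length; the paper's route is shorter but leaves implicit the verification that $A$ is a classical solution of the stated initial-boundary value problem, which is what the uniqueness theorem requires. Your well-definedness argument (boundedness of the source together with $\int_0^1 G(s,x,v)\,dv\le 1$) also differs slightly from the paper's (both factors lie in $L^2$, then Cauchy--Schwarz), but both are valid.
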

\begin{proof}
It is clear that the function $A$ is well-defined since both the Green kernel and the function $(r, v) \mapsto \left(\frac{\partial}{\partial r} - \frac{\partial}{\partial v^2}\right)(f(r)g(v))$ belong to $L^2([0, T] \times [0, 1])$. From the definition of the function $A$, we see that $A$ solves the inhomogeneous heat equation, that is, $A$ satisfies
\begin{align} \label{eq2017-11-27-2}
\left(\frac{\partial}{\partial t} - \frac{\partial}{\partial x^2}\right)A(t, x) = \left(\frac{\partial}{\partial t} - \frac{\partial}{\partial x^2}\right)(f(t)g(x)),
\end{align}
the same boundary conditions as the Green kernel and vanishing initial condition. On the other hand, the function $f(\cdot)g(*)$ also satisfies \eqref{eq2017-11-27-2} with $A(t, x)$ replaced by $f(t)g(x)$ and the same boundary and initial conditions. By the uniqueness of the solution to heat equation on bounded domains (see \cite[Theorem 5, p.57]{Eva98}), we have $A = f(\cdot)g(*)$.
\end{proof}

\section{Malliavin derivatives of $F_2$ and $M_0$}

In this section, we  recall some results on the suprema $F_2$ and $M_0$ in \eqref{eq2017-12-19-2}--\eqref{eq2018-01-02-4}, in order to apply Theorem \ref{th2017-08-24-2} and to prove Theorems \ref{the2017-11-25-1}, \ref{theorem100} and \ref{theorem2018-01-03-1}.

First, we state the 0-1 law for the germ $\sigma$-algebra generated by the Brownian sheet that appears in equation \eqref{eq2017-10-11-62}.
For $t \geq 0$, define the filtration,
\begin{align}\label{eq2018-07-31-1}
\mathscr{F}_t := \sigma\{W(s, x): s \leq t, 0 \leq x \leq 1\} \vee \mathcal{N} \quad \mbox{and} \quad \mathscr{F}_t^+ := \bigcap_{s > t}\mathscr{F}_s,
 \end{align}
 where $\mathcal{N}$ is the $\sigma$-field generated by $\mbox{P}$-null sets.

\begin{lemma}\label{prop3.1}
For any set $B \in \mathscr{F}_0^+, \mbox{P}(B) \in \{0, 1\}.$
\end{lemma}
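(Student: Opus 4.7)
The plan is to reduce the statement to Kolmogorov's 0-1 law for the tail $\sigma$-algebra of a sequence of independent $\sigma$-algebras. The key input is that, viewed as a white noise on $[0,\infty[\, \times [0,1]$, the Brownian sheet $W$ assigns independent Gaussian masses to disjoint Borel sets.

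First I would chop up the strip $]0,1] \times [0,1]$ into the disjoint rectangles $R_k := \,]1/(k+1),\, 1/k] \times [0,1]$ for $k \geq 1$, and set $\mathscr{G}_k := \sigma\{W(A): A \subset R_k \text{ Borel}\}$. The independent-increments property of white noise makes the family $(\mathscr{G}_k)_{k \geq 1}$ mutually independent. Next I would identify $\mathscr{F}_{1/n}^0 := \sigma\{W(s,x) : s \leq 1/n,\ 0 \leq x \leq 1\}$ with $\sigma(\mathscr{G}_k : k \geq n)$: for $s \leq 1/n$ each $W(s,x)$ is the white-noise mass of $[0,s] \times [0,x] \subset [0,1/n] \times [0,1]$, and conversely the white-noise masses of all Borel subsets of $[0,1/n] \times [0,1]$ are recovered from the $W(s,x)$, $s \leq 1/n$, by taking differences (the line $\{0\} \times [0,1]$ being Lebesgue-null).

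Then, using monotonicity of $s \mapsto \mathscr{F}_s$, I would write $\mathscr{F}_0^+ = \cap_{n \geq 1}\mathscr{F}_{1/n}$. Fix $B \in \mathscr{F}_0^+$. Since $\mathscr{F}_{1/n} = \mathscr{F}_{1/n}^0 \vee \mathcal{N}$, for every $n$ there exists $B_n \in \sigma(\mathscr{G}_k : k \geq n)$ with $\mathrm{P}(B \triangle B_n) = 0$. Set $\tilde B := \limsup_n B_n$. For every $N \geq 1$, $\tilde B$ is measurable with respect to $\sigma(B_m : m \geq N) \subset \sigma(\mathscr{G}_k : k \geq N)$, so $\tilde B$ belongs to the tail $\sigma$-algebra $\mathcal{T} := \cap_{N \geq 1}\sigma(\mathscr{G}_k : k \geq N)$ of the independent sequence $(\mathscr{G}_k)$. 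By Kolmogorov's 0-1 law, $\mathrm{P}(\tilde B) \in \{0,1\}$. On the complement of the null set $\cup_n (B \triangle B_n)$ one has $\mathbf{1}_B = \mathbf{1}_{B_n}$ for every $n$, so $B = \tilde B$ almost surely, and therefore $\mathrm{P}(B) = \mathrm{P}(\tilde B) \in \{0,1\}$.

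The argument involves no serious difficulty; the only point that needs mild care is the bookkeeping with the null-set completion $\mathcal{N}$ in the definition of $\mathscr{F}_s$, which is bypassed by replacing $B$ with its a.s.-equal representatives $B_n$ and then passing to their limsup before invoking Kolmogorov.
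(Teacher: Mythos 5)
Your proof is correct, but it takes a genuinely different route from the paper: the paper disposes of this lemma with a single citation to Walsh's germ-field zero--one law for two-parameter martingales (\cite[Proposition 3.2]{Wal861}), whereas you give a direct, self-contained argument that reduces the statement to Kolmogorov's 0--1 law via the slicing $R_k = \,]1/(k+1), 1/k] \times [0,1]$ and the independence of white noise on disjoint sets. This is essentially the classical proof of Blumenthal's 0--1 law transplanted to time-slices of the sheet, and it has the advantage of requiring nothing beyond the independent-scattering property of white noise; the citation route is shorter but imports a less elementary result. Your handling of the completion $\mathcal{N}$ (replacing $B$ by a.s.-equal representatives $B_n$ and passing to $\limsup_n B_n$ before invoking Kolmogorov) is exactly the right bookkeeping. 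One very minor point worth flagging: the identification $\mathscr{F}_{1/n}^0 = \sigma(\mathscr{G}_k : k \geq n)$ should be read modulo null sets, since recovering $W(s,x) = W([0,s]\times[0,x])$ from the slices requires summing the masses $W(([0,s]\times[0,x]) \cap R_k)$ over $k \geq n$, which is an $L^2$ (hence only a.s. along a subsequence) limit; this does not affect your argument, since you work up to null sets throughout, but the equality as stated is an equality of completions rather than of the raw $\sigma$-algebras.
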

\noindent{}This lemma is a consequence of \cite[Proposition 3.2]{Wal861}.

\begin{lemma}\label{lemma100}
Fix $(s_0, y_0) \in I \times J$.
\begin{itemize}
  \item [(a)] With probability one, each sample path of the process $\{\bar{u}(t, y_0): t \in [s_0, s_0 + \delta_1] \}$ achieves its supremum at a unique point in $[s_0, s_0 + \delta_1]$, denoted by $S$. If $y_0 \in \, ]0, 1[$, then $F_2 > 0$ a.s.
       \item [(b)]With probability one, $M_0 > 0$ and each sample path of the process $\{u(t, x): (t, x) \in [0, \delta_1] \times [y_0, y_0 + \delta_2]\}$ achieves its supremum at a unique point in $\, ]0, \delta_1] \times [y_0, y_0 + \delta_2]$, denoted by $(\bar{S}, \bar{X})$.
  \end{itemize}

\end{lemma}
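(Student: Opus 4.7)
The plan is to prove the positivity statements ($F_2 > 0$ and $M_0 > 0$) and the uniqueness of the maximizers ($S$ and $(\bar{S}, \bar{X})$) by independent arguments, keeping parts (a) and (b) parallel.

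For positivity, the plan is to apply the germ $\sigma$-algebra 0-1 law (Lemma \ref{prop3.1}). For part (b), consider
\begin{align*}
A = \bigcap_{n \geq 1} \left\{\sup_{(t, x) \in [0, 1/n] \times [y_0, y_0 + \delta_2]} u(t, x) > 0\right\}.
\end{align*}
Each inner event is $\mathscr{F}_{1/n}$-measurable, so $A \in \mathscr{F}_0^+$. The symmetry of the centered Gaussian $u(1/n, y_0)$ gives $\mbox{P}(u(1/n, y_0) > 0) = 1/2$, hence $\mbox{P}(A) \geq 1/2$, and Lemma \ref{prop3.1} forces $\mbox{P}(A) = 1$, whence $M_0 > 0$ a.s. For part (a), when $y_0 \in \, ]0, 1[$, the plan is to translate in time: the shifted sheet $\tilde{W}(r, x) = W(s_0 + r, x) - W(s_0, x)$ is a Brownian sheet independent of $\mathscr{F}_{s_0}$, and the mild-solution decomposition
\begin{align*}
\bar{u}(t, y_0) = \int_0^{s_0}\!\!\int_0^1 [G(t - r, y_0, v) - G(s_0 - r, y_0, v)] W(dr, dv) + \int_{s_0}^t\!\!\int_0^1 G(t - r, y_0, v) W(dr, dv)
\end{align*}
presents the first summand as an $\mathscr{F}_{s_0}$-measurable smooth function of $t$ vanishing at $t = s_0$ (the Green kernel is smooth away from its singularity), hence of order $O(t - s_0)$, while the second summand is independent of $\mathscr{F}_{s_0}$ with typical fluctuations of order $(t - s_0)^{1/4}$ that dominate for small $t - s_0$. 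Applying Lemma \ref{prop3.1} to $\tilde{W}$ (with the analogous symmetry argument) shows that the supremum of the second summand is positive in every right-neighborhood of $s_0$, whence $F_2 > 0$ a.s.

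For uniqueness of the argmax, both $\{\bar{u}(t, y_0): t \in [s_0, s_0 + \delta_1]\}$ and $\{u(t, x): (t, x) \in [0, \delta_1] \times [y_0, y_0 + \delta_2]\}$ are continuous centered Gaussian fields on compact parameter sets, and the variances of their increments are strictly positive at distinct index points (the upper bounds in \eqref{eq2016-06-14-1} are matched, for the linear SHE, by lower bounds of the same order). The plan is to reduce the statement, via a countable family of partitions of the parameter set with mesh tending to zero, to the claim that for any two disjoint closed cells $K_1, K_2$,
\begin{align*}
\mbox{P}\left(\sup_{K_1} X = \sup_{K_2} X\right) = 0,
\end{align*}
which will be proved by a Cameron-Martin shift. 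Concretely, choose $h \in L^2([0, T] \times [0, 1])$ such that the induced shift $\mu(t, y_0) := \int_0^t \int_0^1 G(t - r, y_0, v) h(r, v)\, dv\, dr$ satisfies $\mu \equiv 1$ on $K_1$ and $\mu \equiv 0$ on $K_2$; Girsanov's theorem then produces an equivalent probability measure under which $\sup_{K_1} X - \sup_{K_2} X$ is translated by $1$, and iterating the shift by $\lambda h$ for $\lambda \in \mathbb{R}$ rules out all atoms. Finally, $\bar{S} > 0$ is automatic from $M_0 > 0$ and $u(0, \cdot) \equiv 0$, so the maximizer in (b) lies in $\, ]0, \delta_1] \times [y_0, y_0 + \delta_2]$ as stated.

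The main obstacle is the Cameron-Martin construction: producing $h$ whose image under the heat propagator equals the prescribed profile on $K_1 \cup K_2$ is an approximate-controllability-type statement for the linear heat equation. This can be sidestepped by invoking the general theorem (see e.g.\ \cite{Lif95}) that a continuous Gaussian process on a compact parameter set whose increments have strictly positive variance achieves its supremum at a unique point almost surely; part (b) is then handled identically, with the space-time rectangle replacing the time interval.
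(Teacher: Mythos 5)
Your treatment of $M_0>0$ (the germ-field 0--1 law plus the symmetry of the centered Gaussian $u(1/n,y_0)$) is exactly the paper's argument, and your fallback for uniqueness of the maximizer --- citing the general theorem that a continuous Gaussian process on a compact parameter set with non-degenerate increments attains its supremum at a unique point --- is also what the paper does (it invokes \cite[Lemma 2.6]{KiP} together with the non-degeneracy of increments from \cite[Lemma 4.2]{DKN07}). The Cameron--Martin route you sketch first is unnecessary and, as you concede, its controllability step is the hard part. The remark that $\bar S>0$ follows from $M_0>0$ and $u(0,\cdot)\equiv 0$ is fine.

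The genuine gap is in your proof that $F_2>0$ when $s_0>0$. The claim that the $\mathscr{F}_{s_0}$-measurable summand
\begin{align*}
f(t):=\int_0^{s_0}\int_0^1\bigl[G(t-r,y_0,v)-G(s_0-r,y_0,v)\bigr]\,W(dr,dv)
\end{align*}
is of order $O(t-s_0)$ is false. Although $G(t-r,y_0,v)$ is smooth in $t$ for $r<s_0<t$, the $t$-derivative of $f$ blows up as $t\downarrow s_0$ (the kernel $\partial_\tau G(\tau,\cdot,\cdot)$ fails to be square integrable near $\tau=0$), and a direct computation with the heat kernel gives $\mathrm{E}[f(t)^2]\asymp (t-s_0)^{1/2}$ --- the same order as the variance of the new-noise summand. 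So both summands fluctuate at scale $(t-s_0)^{1/4}$ and neither dominates. The conditional 0--1 law for the shifted sheet then only tells you that, given $\mathscr{F}_{s_0}$, the event $\{\limsup_{t\downarrow s_0}\bar u(t,y_0)>0\}$ has conditional probability $0$ or $1$; the symmetry bound $\mathrm{P}\{\bar u(t,y_0)>0\}=1/2$ shows that this conditional probability equals $1$ on a set of measure at least $1/2$, not almost surely. The paper closes this point exactly where your argument stalls: it quotes \cite[p.~23, (3.9)]{Kho14} for the solution on all of $\mathbb{R}$, giving $\sup_{t\in[s_0,s_0+\delta_1]}\tilde u(t,y_0)-\tilde u(s_0,y_0)>0$ a.s., and transfers this to the bounded-interval solution via the mutual absolute continuity of the two laws (\cite[Corollary 4]{MuT02}) --- which is precisely why the hypothesis $y_0\in\,]0,1[$ appears in the statement. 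To repair your proof you would need either to import those two results or to establish, conditionally on $\mathscr{F}_{s_0}$, a quantitative lower bound (of law-of-the-iterated-logarithm type) for the new-noise part that beats the a.s.\ modulus of continuity of $f$; the bare decomposition plus the 0--1 law does not suffice.
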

\begin{proof}
The first statement of (a) follows from \cite[Lemma 2.6]{KiP}, since for $t, s \in [s_0, s_0 + \delta_1]$ with $t \neq s$,
\begin{align*}
\mbox{E}[|\bar{u}(t, y_0) - \bar{u}(s, y_0)|^2] = \mbox{E}[|u(t, y_0) - u(s, y_0)|^2] \neq 0,
\end{align*}
by \cite[Lemma 4.2]{DKN07}. For the second statement of (a),  if $s_0 = 0$, it is clear that $F_2 > 0$ a.s. by using the 0-1 law in Lemma \ref{prop3.1}; see also the proof for $M_0 >0$ a.s. below. If $s_0 > 0$, we denote $\{\tilde{u}(t, x): (t, x) \in [s_0, s_0 + \delta_1] \times [y_0, y_0 + \delta_2]\}$ the solution to \eqref{eq2017-10-11-62} on the whole real line $\mathbb{R}$. By \cite[p.23, (3.9)]{Kho14}, we see that
\begin{align*}
\sup_{t \in [s_0, s_0 + \delta_1]}\tilde{u}(t, y_0) - \tilde{u}(s_0, y_0) > 0 \quad \mbox{a.s.}
\end{align*}
Since the processes $\{u(t, x): (t, x) \in [s_0, s_0 + \delta_1] \times [y_0, y_0 + \delta_2]\}$ and $\{\tilde{u}(t, x): (t, x) \in [s_0, s_0 + \delta_1] \times [y_0, y_0 + \delta_2]\}$ are mutually absolute continuous by \cite[Corollary 4]{MuT02}, we conclude that $F_2 > 0$ a.s.

We turn to proving statement (b). Fix $x \in [y_0, y_0 + \delta_2]$. It is clear that
\begin{align} \label{eq2017-11-30-2}
\{M_0 > 0\} = \Big\{\sup_{(t, x) \in [0, \delta_1] \times [y_0, y_0 + \delta_2]}u(t , x) > 0\Big\} \supset \limsup_{t_n \downarrow 0}\, \{u(t_n, x) > 0\}.
 \end{align}
 On the other hand, we know that
 \begin{align} \label{eq2017-11-30-3}
 \limsup_{t_n \downarrow 0}\, \{u(t_n, x) > 0\} \in \mathscr{F}_0^+
  \end{align}
  and
  \begin{align}\label{eq2017-11-30-4}
  \mbox{P}\Big\{\limsup_{t_n \downarrow 0}\, \{u(t_n, x) > 0\}\Big\} \geq \limsup\limits_{t_n \downarrow 0}\mbox{P}\{u(t_n, x) > 0\} = \frac{1}{2},
   \end{align}
since for every $n$, $u(t_n, x)$ is a centered Gaussian random variable and $\mbox{P}\{u(t_n, x) > 0\} = \frac{1}{2}$.
Hence by Lemma \ref{prop3.1}, we obtain that
\begin{align}\label{eq2017-11-30-5}
\mbox{P}\Big\{\limsup_{t_n \downarrow 0}\, \{u(t_n, x) > 0\}\Big\} = 1,
\end{align}
 which establishes that $M_0 > 0$ a.s. Furthermore, for any $(t, x)$, $(s, y) \in \, ]0, \delta_1] \times [y_0, y_0 + \delta_2]$ with  $(t, x) \neq (s, y)$, by \cite[Lemma 4.2]{DKN07},
\begin{align*}
\mbox{E}[|u(t, x) - u(s, y)|^2] \neq 0,
\end{align*}
which yields the conclusion of statement (b) by \cite[ Lemma 2.6 ]{KiP}.
\end{proof}

\begin{lemma} \label{lemma9}
The random variables $M_0$ and $F_2$ belong to $\mathbb{D}^{1, 2}$ and
\begin{align}
DM_0 & = 1_{\{\cdot < \bar{S}\}}G(\bar{S} - \cdot, \bar{X}, *),\label{eq2017-09-27-20011} \\
DF_2 & = 1_{\{\cdot < S\}}G(S - \cdot, y_0, *) - 1_{\{\cdot < s_0\}}G(s_0 - \cdot, y_0, *), \label{eq2017-09-27-2}
\end{align}
where the random variables $\bar{S}$, $\bar{X}$ and $S$ are defined in Lemma \ref{lemma100}.
\end{lemma}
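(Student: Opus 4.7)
The plan is to approximate each supremum by a finite maximum over a countable dense subset of the parameter set, use the Lipschitz chain rule in Malliavin calculus to compute the derivative of the finite maximum, and then pass to the limit using the closedness of $D$. I describe the $F_2$ case in detail; the $M_0$ case is completely analogous.

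Fix a countable dense set $\{t_k\}_{k \geq 1} \subset [s_0, s_0 + \delta_1]$ with $t_1 = s_0$, and set $F_2^{(n)} = \max_{1 \leq k \leq n} \bar{u}(t_k, y_0)$. Since $u(t,x) \in \mathbb{D}^{\infty}$ with $Du(t,x) = 1_{\{\cdot < t\}}G(t-\cdot, x, *)$, each $\bar{u}(t_k, y_0)$ lies in $\mathbb{D}^{\infty}$. The joint law of $(\bar{u}(t_1, y_0), \ldots, \bar{u}(t_n, y_0))$ is a centered Gaussian vector whose covariance matrix is positive definite — any two distinct components have a non-degenerate joint law, as invoked in the proof of Lemma \ref{lemma100}(a) — so it is absolutely continuous and the argmax $T_n$ within $\{t_1, \ldots, t_n\}$ is a.s.\ unique. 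The Lipschitz chain rule (Nualart, Proposition 1.2.3) therefore gives $F_2^{(n)} \in \mathbb{D}^{1,2}$ with
\begin{equation*}
DF_2^{(n)} = D\bar{u}(T_n, y_0) = 1_{\{\cdot < T_n\}}G(T_n - \cdot, y_0, *) - 1_{\{\cdot < s_0\}}G(s_0 - \cdot, y_0, *).
\end{equation*}

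To pass to the limit, continuity of $t \mapsto \bar{u}(t, y_0)$ together with density of $\{t_k\}$ gives $F_2^{(n)} \uparrow F_2$ almost surely, and standard Gaussian tail bounds on $\sup_t |\bar{u}(t, y_0)|$ upgrade this to $L^2$ convergence. For the argmax, any subsequential limit $T_\infty$ of $(T_n)$ satisfies $\bar{u}(T_\infty, y_0) = F_2$ by continuity and density, hence $T_\infty = S$ by the a.s.\ uniqueness in Lemma \ref{lemma100}(a); thus $T_n \to S$ almost surely. Now the Walsh isometry combined with \eqref{eq2016-06-14-1} yields, for arbitrary deterministic $t, s \in [s_0, s_0 + \delta_1]$,
\begin{equation*}
\int_0^T\!\!\int_0^1 \bigl(1_{\{r<t\}}G(t-r, y_0, v) - 1_{\{r<s\}}G(s-r, y_0, v)\bigr)^2 dr\, dv = \mathrm{E}[(u(t, y_0) - u(s, y_0))^2] \leq C|t - s|^{1/2},
\end{equation*}
and evaluating this $\omega$-by-$\omega$ at $(t, s) = (T_n(\omega), S(\omega))$ gives $\|DF_2^{(n)} - DF_2\|_{\mathscr{H}}^2 \leq C|T_n - S|^{1/2} \to 0$ almost surely, where $DF_2$ denotes the candidate right-hand side of \eqref{eq2017-09-27-2}. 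The same pointwise estimate yields a uniform $L^2(\Omega;\mathscr{H})$-bound on $DF_2^{(n)}$, so dominated convergence upgrades the a.s.\ convergence to $L^2(\Omega; \mathscr{H})$-convergence. The closedness of $D$ then delivers $F_2 \in \mathbb{D}^{1,2}$ with the announced expression for $DF_2$.

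For $M_0$, the plan is identical with $\{t_k\}$ replaced by a countable dense net $\{(t_k, x_k)\}$ in $\,]0, \delta_1] \times [y_0, y_0 + \delta_2]$: the a.s.\ unique argmax of the discretization converges to $(\bar{S}, \bar{X})$ by Lemma \ref{lemma100}(b) and the same subsequential argument, and the continuity of $(t, x) \mapsto 1_{\{\cdot < t\}}G(t - \cdot, x, *)$ in $\mathscr{H}$ — a direct consequence of \eqref{eq2016-06-14-1} — then closes the argument via closedness of $D$. I expect the main subtle points to be (i) justifying the chain rule for the Lipschitz-but-not-$C^1$ function $\max$, which rests on positive-definiteness of the discrete Gaussian covariance (and hence absolute continuity), and (ii) the a.s.\ convergence of the discrete argmax to the unique continuum argmax, which is the only place where the uniqueness assertions in Lemma \ref{lemma100} enter crucially; everything else reduces to the Hölder estimate \eqref{eq2016-06-14-1}.
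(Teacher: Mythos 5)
Your proposal follows essentially the same route as the paper: discretize the supremum over a countable dense set, compute the derivative of the finite maximum, and pass to the limit using the a.s.\ convergence of the discrete argmax to the unique continuum argmax together with the $\mathscr{H}$-continuity of $(t,x)\mapsto 1_{\{\cdot<t\}}G(t-\cdot,x,*)$ coming from \eqref{eq2016-06-14-1}. The only difference in the closing step is that the paper invokes \cite[Lemma 1.2.3]{Nua06} (uniform bound on $\mathrm{E}[\|DF_2^{(n)}\|_{\mathscr{H}}^2]$ plus $L^2$-convergence of $F_2^{(n)}$, giving weak convergence of the derivatives) and then identifies the weak limit, whereas you establish strong $L^2(\Omega;\mathscr{H})$-convergence directly via the $\omega$-wise bound $\|DF_2^{(n)}-DF_2\|_{\mathscr{H}}^2\le C|T_n-S|^{1/2}$ and apply closedness of $D$; both are valid. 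One imprecision to fix: with $t_1=s_0$ the first component $\bar u(t_1,y_0)$ vanishes identically, so the covariance matrix of $(\bar u(t_1,y_0),\ldots,\bar u(t_n,y_0))$ is \emph{not} positive definite and the vector's law is not absolutely continuous; what you actually need (and have, from the pairwise non-degeneracy of increments) is a.s.\ uniqueness of the discrete argmax, after which the formula for $DF_2^{(n)}$ follows from the local property of $D$ (\cite[Proposition 1.3.16]{Nua06}) applied on each cell $\{\bar u(t_i,y_0)>\max_{j\ne i}\bar u(t_j,y_0)\}$, which is the paper's argument.
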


\begin{remark}
The function $(t, x) \mapsto 1_{\{\cdot < t\}}G(t - \cdot, x, *)$ from $[0, T] \times [0, 1]$ into $\mathscr{H}$ is continuous by \eqref{eq2017-09-28-2}. Therefore, $1_{\{\cdot < \bar{S}\}}G(\bar{S} - \cdot, \bar{X}, *)$ is the random element of $\mathscr{H}$ obtained by composition of the random vector $\omega \mapsto (\bar{S}(\omega), \bar{X}(\omega))$ and this continuous function.
\end{remark}
\begin{proof}[Proof of Lemma \ref{lemma9}]
It is similar to the proof for the Brownian sheet; see \cite[Lemma 2.1.9]{Nua06}. We present the main ingredients and refer to \cite{Pu18} for full details. We only prove \eqref{eq2017-09-27-20011} since the proof of \eqref{eq2017-09-27-2} is similar.
Let $\{(t_k, x_k)\}_{k = 1}^{\infty}$ be a dense subset of $[0, T] \times [0, 1]$. Define
\begin{align*}
M_n := \max\{u(t_1, x_1), \ldots, u(t_n, x_n)\}.
\end{align*}
Then $M_n$ converges to $M$ almost surely as $n \rightarrow \infty$. Using the local property of the operator $D$ (see Proposition 1.3.16 in \cite{Nua06}), we see that
\begin{align}\label{eq1006}
DM_n &=1_{\{\cdot < S_n\}}G(S_n - \cdot, X_n, *),
\end{align}
where $(S_n, X_n)$ is the unique point such that $M_n = u(S_n, X_n)$.
By checking the conditions of \cite[Lemma 1.2.3]{Nua06}, we see that $M_0$ belongs to $\mathbb{D}^{1, 2}$ and $DM_n$ converges to $DM_0$ in the weak topology of $L^2(\Omega, \mathscr{H})$. Using the fact that the function $(t, x) \mapsto Du(t, x) = 1_{\{\cdot < t\}} G(t - \cdot, x, *)$ from $[0, T] \times [0, 1]$ into $\mathscr{H}$ is continuous, because for any $(t, x), (s, y) \in [0, T] \times [0, 1]$, by \eqref{eq2016-06-14-1},
\begin{align}\label{eq2017-09-28-2}
\|D(u(t, x) - u(s, y))\|_{\mathscr{H}}^2 &= \|1_{\{\cdot < t\}} G(t - \cdot, x, *) - 1_{\{\cdot < s\}} G(s - \cdot, y, *)\|_{\mathscr{H}}^2\nonumber \\
&= \mbox{E}[|u(t, x) - u(s, y)|^2]\\
&\leq C_T(|t - s|^{1/2} + |x - y|), \nonumber
\end{align}
and since $(S_n, X_n)$ converges to $(\bar{S}, \bar{X})$ a.s., we conclude that
$
 DM_0 = 1_{\{\cdot < \bar{S}\}}G(\bar{S} - \cdot, \bar{X}, *).
 $
\end{proof}

\section{Smoothness of the densities} \label{section4.3}
In this section, we suppose that $I$ and $J$ are as above \eqref{eq2018-03-01-600} and we are going to introduce the random variables needed for Theorem \ref{th2017-08-24-2} and prove they satisfy the conditions therein. We start by establishing the smoothness of the random variables $\{Y_r: r \in [s_0, s_0 + \delta_1]\}$ and $\{\bar{Y}_r: r \in [0, \Delta_{\bullet}]\}$ defined in \eqref{eq2017-09-28-1} and \eqref{eq2018-01-03-2} respectively.

For simplicity of notation, we denote, for $(t, s, x, y) \in [0, T]^2 \times [0, 1]^2$,
\begin{align*}
u(1_{]s, t]\times]y, x]})&:= u(t, x) + u(s, y) - u(t, y) - u(s, x), \\
Du(t, x; s, y)&:= D(u(t, x) + u(s, y) - u(t, y) - u(s, x)).
\end{align*}

\begin{lemma} \label{lemma13}
\begin{itemize}
  \item [(a)]
  For any  $r \in [s_0, s_0 + \delta_1]$, $Y_{r}$ belongs to $\mathbb{D}^{\infty}$ and for any integer $l$,
\begin{align}\label{eq2017-09-28-3}
D^lY_{r}
&= \int_{[s_0, r]^2}dtds \, \frac{2p_0(2p_0 - 1) \cdots (2p_0 - l + 1)}{|t - s|^{\gamma_0/2}}\nonumber \\
& \quad \quad \quad \quad \quad \times (u(t, y_0) - u(s, y_0))^{2p_0 - l}(D(u(t, y_0) - u(s, y_0)))^{\otimes l}.
\end{align}
\item [(b)]  For any  $r \in [0, \Delta_{\bullet}]$, $\bar{Y}_{r}$ belongs to $\mathbb{D}^{\infty}$ and for any integer $l$,
\begin{align}\label{eq2017-09-28-300}
D^l\bar{Y}_{r}
&= \int_{[0, r]^2}dtds \,  \frac{2p_0(2p_0 - 1) \cdots (2p_0 - l + 1)}{|t - s|^{\gamma_0/2}}\nonumber \\
& \quad \quad \quad \quad \quad  \times (u(t, y_0) - u(s, y_0))^{2p_0 - l}(D(u(t, y_0) - u(s, y_0)))^{\otimes l}\nonumber \\
& \quad + \int_{[0, r]^2}dtds \int_{[y_0, y_0 + \Delta_*]^2}dxdy \, \frac{2p_0(2p_0 - 1)\cdots(2p_0 - l + 1)}{|t - s|^{1 + 2p_0\gamma_1}|x - y|^{1 + 2p_0\gamma_2}} \nonumber \\
& \quad \quad \quad \quad \quad \quad \quad \quad \quad \quad  \times u(1_{]s, t]\times]y, x]})^{2p_0 - l}(Du(t, x; s, y))^{\otimes l}.
\end{align}
\end{itemize}
\end{lemma}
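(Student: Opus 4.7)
The two parts share the same structure, so I describe (a) in detail and indicate the modification for (b). For fixed $(t, s) \in [s_0, r]^2$ with $t \neq s$, the random variable $\Delta(t,s) := u(t, y_0) - u(s, y_0)$ is a Wiener integral against the white noise, hence a centered Gaussian element of $\mathbb{D}^\infty$ whose derivative
\[ D\Delta(t,s) = 1_{\{\cdot < t\}} G(t - \cdot, y_0, *) - 1_{\{\cdot < s\}} G(s - \cdot, y_0, *) \]
is a \emph{deterministic} element of $\mathscr{H}$ (and whose higher derivatives vanish). By the chain rule, the integrand $F_{t,s} := \Delta(t,s)^{2p_0}/|t-s|^{\gamma_0/2}$ belongs to $\mathbb{D}^\infty$, with $l$-th derivative equal to the integrand appearing on the right-hand side of \eqref{eq2017-09-28-3}.

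I then approximate $Y_r$ by Riemann sums $Y_r^{(n)}$ associated with a vanishing-mesh sequence of partitions of $[s_0, r]^2$. Each $Y_r^{(n)}$ is a polynomial in Gaussian variables and therefore lies in $\mathbb{D}^\infty$, and its $l$-th Malliavin derivative is the corresponding Riemann sum for the right-hand side of \eqref{eq2017-09-28-3}. Invoking the closability of $D^l$, it suffices to show that for every $p \geq 1$ and every integer $l$,
\[ \int_{[s_0, r]^2} \bigl(\mbox{E}\bigl[\|D^l F_{t,s}\|_{\mathscr{H}^{\otimes l}}^p\bigr]\bigr)^{1/p} dt\, ds < \infty, \]
so that $Y_r^{(n)} \to Y_r$ in $L^p(\Omega)$ and the expression in \eqref{eq2017-09-28-3} is the $L^p(\Omega; \mathscr{H}^{\otimes l})$-limit of $D^l Y_r^{(n)}$ by dominated convergence.

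The required integrability follows from Gaussian moment estimates and \eqref{eq2016-06-14-1}: $\mbox{E}[|\Delta(t,s)|^{(2p_0-l)p}] \leq C|t-s|^{(2p_0-l)p/4}$ and $\|D\Delta(t,s)\|_{\mathscr{H}}^{lp} = (\mbox{E}[|\Delta(t,s)|^2])^{lp/2} \leq C|t-s|^{lp/4}$. Inserting these in the chain-rule expression for $D^l F_{t,s}$ yields
\[ \mbox{E}[\|D^l F_{t,s}\|_{\mathscr{H}^{\otimes l}}^p] \leq C \, |t-s|^{(p_0 - \gamma_0) p/2}, \]
which is integrable on $[s_0, r]^2$ since $p_0 > \gamma_0$ by \eqref{eq2016-06-10-1}. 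Part (b) proceeds identically for $Y_0(r)$; for $Y_1(r)$ the role of $\Delta(t,s)$ is played by the rectangular increment $u(1_{]s,t] \times ]y,x]})$, also a centered Gaussian, whose Malliavin derivative has $\mathscr{H}$-norm bounded by $\sqrt{C_T} \, |t-s|^{\theta_1/2}|x-y|^{\theta_2/2}$ by Lemma \ref{lemma6} and \eqref{eq1001}. The necessary integrability on $[0, r]^2 \times [y_0, y_0 + \Delta_*]^2$ is then precisely the computation carried out in \eqref{eq2018-01-03-7}, whose exponent is positive thanks to \eqref{eq2018-01-02-2}.

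The only delicate point is the commutation of $D^l$ with the integral, which is nontrivial because of the singular kernels $|t-s|^{-\gamma_0/2}$ (and $|t-s|^{-1-2p_0\gamma_1}|x-y|^{-1-2p_0\gamma_2}$ for $Y_1(r)$). However, the parameters $p_0, \gamma_0, \gamma_1, \gamma_2$ in \eqref{eq2016-06-10-1}--\eqref{eq2018-01-02-2} have been chosen precisely so that the H\"older estimates on the Gaussian increments of $u$ absorb these singularities uniformly in $(t,s)$ (resp.\ $(t,s,x,y)$); this is the step on which the whole proof hinges.
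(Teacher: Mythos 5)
Your proposal is correct and follows essentially the same route as the paper, whose proof simply invokes the general permutation of the Malliavin derivative with a Lebesgue integral (\cite[Proposition 3.4.3]{Nua18}) and defers details to \cite{Pu18}: your Riemann-sum approximation, closability of $D^l$, and uniform $L^p$ bounds on $\|D^lF_{t,s}\|_{\mathscr{H}^{\otimes l}}$ are exactly the ingredients that justify this permutation, and your exponent computations match \eqref{eq2018-01-03-6} and \eqref{eq2018-01-03-7}. One cosmetic point: the integrability of the singular kernel in part (b) rests on \eqref{eq2018-01-02-1} (i.e.\ $\gamma_i < \theta_i/2 - 1/(2p_0)$), whereas \eqref{eq2018-01-02-2} only serves to produce the clean final exponent $p(p_0 - \gamma_0)$.
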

\begin{proof}
Lemma \ref{lemma13} follows by permuting the Malliavin derivative and Lebesgue integral; see \cite[Proposition 3.4.3]{Nua18} for such a property. We refer to \cite{Pu18} for a detailed proof.
\end{proof}

Moreover, we have the following estimates on moments of the Malliavin derivatives of the random variables $\{Y_r, \, r \in [s_0, s_0 + \delta_1]\}$ and $\{\bar{Y}_r, \, r \in [0, \Delta_{\bullet}]\}$.

\begin{lemma} \label{lemma2017-10-2-1}
\begin{itemize}
  \item [(a)]For any $p \geq 1$,  there exists a constant $c_p$,  not depending on $(s_0, y_0) \in [0, T] \times [0, 1]$, such that for all $\delta_1 > 0$ and for all $r \in [s_0, s_0 + \delta_1]$,
\begin{align}\label{eq2017-10}
\mbox{E}[\|DY_r\|^p_{\mathscr{H}}]\leq c_p(r - s_0)^{2p}\delta_1^{(p_0 - \gamma_0)p/2}.
\end{align}
  \item [(b)]For any $p \geq 1$, there exists a constant $c_p$, not depending on $y_0 \in [0, 1]$,  such that  for all $r \in [0, \Delta_{\bullet}]$,
\begin{align}\label{eq2018-01-03-10}
\mbox{E}[\|D\bar{Y}_r\|_{\mathscr{H}}^p] &\leq c_p \, r^{2p}\delta^{(p_0 - \gamma_0)p}.
\end{align}
\item [(c)] For any integer $i$ and $p \geq 1$,
\begin{align} \label{eq2017-11-26-9}
 \sup_{r \in [s_0, s_0 + \delta_1]}\mbox{E}\left[\|D^iY_{r}\|_{\mathscr{H}^{\otimes i}}^p\right] < \infty \quad \mbox{and}\quad  \sup_{r \in [0, \Delta_{\bullet}]}\mbox{E}\left[\|D^i\bar{Y}_{r}\|_{\mathscr{H}^{\otimes i}}^p\right] < \infty.
\end{align}
\end{itemize}
\end{lemma}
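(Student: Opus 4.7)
The plan is to start from the explicit expressions for the Malliavin derivatives in Lemma \ref{lemma13}, specializing to $l = 1$ for parts (a) and (b), and to pull the Hilbert norm inside the Lebesgue integrals via Minkowski's integral inequality. The key observation is that $u$ depends linearly on the noise, so $D(u(t, y_0) - u(s, y_0))$ and $Du(t, x; s, y)$ are \emph{deterministic} elements of $\mathscr{H}$, with norms controlled by the Walsh isometry together with Lemma \ref{lemma6}: namely $\|D(u(t, y_0) - u(s, y_0))\|_{\mathscr{H}}^2 \leq C_T |t - s|^{1/2}$ and $\|Du(t, x; s, y)\|_{\mathscr{H}}^2 \leq C_T |t - s|^{\theta_1}|x - y|^{\theta_2}$. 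Since these bounds are pointwise deterministic, they will decouple cleanly from the random factor $|u(\cdots)|^{2p_0 - 1}$ when we take expectations.

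For (a), Minkowski bounds $\|DY_r\|_{\mathscr{H}}$ by the integral over $[s_0, r]^2$ of $\frac{2p_0}{|t - s|^{\gamma_0/2}}|u(t, y_0) - u(s, y_0)|^{2p_0 - 1}\|D(u(t, y_0) - u(s, y_0))\|_{\mathscr{H}}$. Applying H\"{o}lder's inequality in this integral yields an outer factor $(r - s_0)^{2(p - 1)}$, and bringing the expectation inside invokes Gaussian moment bounds and \eqref{eq2016-06-14-1} to give $\mbox{E}[|u(t, y_0) - u(s, y_0)|^{(2p_0 - 1)p}] \leq C|t - s|^{(2p_0 - 1)p/4}$. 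Collecting exponents produces integrand at most $C|t - s|^{p(p_0 - \gamma_0)/2}$, which by $p_0 > \gamma_0$ (see \eqref{eq2016-06-10-1}) is further bounded by $C\delta_1^{p(p_0 - \gamma_0)/2}$. Integrating $dt\,ds$ over $[s_0, r]^2$ then contributes the remaining factor $(r - s_0)^2$, giving the claimed rate.

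Part (b) decomposes via $\bar{Y}_r = Y_0(r) + Y_1(r)$. The bound on $DY_0(r)$ is identical to (a) with $[s_0, r]$ replaced by $[0, r]$ and $\delta_1$ by $\Delta_{\bullet} = \delta^2$, producing $c_p r^{2p}\delta^{p(p_0 - \gamma_0)}$. For $DY_1(r)$, applying the same Minkowski-then-H\"{o}lder strategy, now over $[0, r]^2 \times [y_0, y_0 + \Delta_*]^2$, yields an outer factor $(r \Delta_*)^{2(p - 1)}$; the remaining integrand combines the Gaussian bound on $\mbox{E}[|u(1_{]s, t]\times]y, x]})|^{(2p_0 - 1)p}]$ with $\|Du(t, x; s, y)\|_{\mathscr{H}}^p$, both of which, via Lemma \ref{lemma6}, multiply to $(|t - s|^{\theta_1}|x - y|^{\theta_2})^{p_0 p}$. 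This is \emph{exactly} the integrand that appeared in \eqref{eq2018-01-03-7} for $\mbox{E}[|Y_1(r)|^p]$, so the remaining integral collapses as it did there, using \eqref{eq2018-01-02-2} and the identity $2\theta_1 + \theta_2 = 1$ from \eqref{eq2017-10-04-3}, to yield $c_p r^{2p}\delta^{p(p_0 - \gamma_0)}$.

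For (c), the same template extends to higher-order derivatives. Minkowski bounds $\|D^i Y_r\|_{\mathscr{H}^{\otimes i}}$ and $\|D^i \bar{Y}_r\|_{\mathscr{H}^{\otimes i}}$ by integrals whose integrands involve the tensor norm of $(D(\cdots))^{\otimes i}$, which factorizes as $\|D(\cdots)\|_{\mathscr{H}}^i$ precisely because the deterministic tensors decouple. H\"{o}lder and Gaussian moment bounds then produce an integrand of the same shape as in (a)--(b), only with the random factor $|u(\cdots)|^{2p_0 - i}$ in place of $|u(\cdots)|^{2p_0 - 1}$; since $p_0$ can be chosen arbitrarily large in \eqref{eq2016-06-10-1}, all exponent sums on $|t - s|$ and $|x - y|$ remain integrable, giving finite moments uniformly in $r$. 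The main obstacle here is bookkeeping: one must verify that the tensor-norm factorization of the deterministic kernels is valid and that the exponent arithmetic continues to respect \eqref{eq2018-01-02-1}--\eqref{eq2018-01-02-2} for every $i$, but no new analytic ingredient beyond those already used in (a) and (b) is needed.
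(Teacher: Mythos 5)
Your proposal follows essentially the same route as the paper's proof: the explicit formulas of Lemma \ref{lemma13}, Minkowski's integral inequality followed by H\"older over the integration domain to produce the factors $(r-s_0)^{2(p-1)}$ and $(r\Delta_*)^{2(p-1)}$, the deterministic bounds \eqref{eq2017-09-28-2} and \eqref{eq1001} on the Malliavin derivatives together with the moment bound \eqref{eq2016-06-14-1}, and the exponent arithmetic collapsing exactly as in \eqref{eq2018-01-03-7}. One small correction in (c): $p_0$ is fixed once and for all by \eqref{eq2016-06-10-1}--\eqref{eq2018-01-02-2} and cannot be re-chosen depending on $i$, but this is harmless because, by the isometry, each derivative factor contributes the same H\"older exponent in $|t-s|$ (and $|x-y|$) as each increment factor, so the total exponent is independent of $i$ and the integrands remain integrable for the fixed $p_0$.
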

\begin{proof}
We first prove \eqref{eq2017-10}. By Lemma \ref{lemma13}(a),
\begin{align}\label{eq2017-11}
DY_r = 2p_0\int_{[s_0, r]^2}dsdt\, \frac{(u(t, y_0) - u(s, y_0))^{2p_0 - 1}}{|t - s|^{\gamma_0/2}}D(u(t, y_0) - u(s, y_0)),
\end{align}
and for any $p \geq 1$, by H\"{o}lder's inequality,
\begin{align}\label{eq2017-12}
\mbox{E}[\|DY_r\|^p_{\mathscr{H}}] &\leq  c_p (r - s_0)^{2(p - 1)}\int_{[s_0, r]^2}dsdt\, \frac{\mbox{E}\left[|u(t, y_0) - u(s, y_0)|^{(2p_0 - 1)p}\right]}{|t - s|^{\gamma_0p/2}}\nonumber \\
& \quad \quad \quad \quad\quad \quad\quad \quad\quad \quad \quad \quad\quad \quad\quad \quad\times \|D(u(t, y_0) - u(s, y_0))\|^p_{\mathscr{H}}\nonumber \\
& \leq c_p (r - s_0)^{2(p - 1)}\int_{[s_0, r]^2}dsdt \, |t - s|^{p(p_0 - \gamma_0)/2}\leq c_p (r - s_0)^{2p}\delta_1^{(p_0 - \gamma_0)p/2},
\end{align}
where in the second inequality we use \eqref{eq2017-09-28-2} and \eqref{eq2016-06-14-1}.

To prove (b), it suffices to estimate the moments of $DY_1(r)$ since the estimate for the moments of $DY_0(r)$ is similar to the proof of (a). Indeed, from \eqref{eq2017-09-28-300}, by H\"{o}lder's inequality, for any $p \geq 1$,
\begin{align}\label{eq2018-01-25-2}
\mbox{E}[\|DY_1(r)\|^p_{\mathscr{H}}] &\leq  c_p\, (r\Delta_*)^{2(p - 1)}\int_{[0, r]^2}dtds \int_{[y_0, y_0 + \Delta_*]^2}dxdy \, \nonumber\\
 & \qquad \qquad \qquad \qquad\qquad \qquad\times \frac{\mbox{E}[|u(1_{]s, t]\times]y, x]})|^{(2p_0 - 1)p}] \, \|Du(t, x; s, y)\|_{\mathscr{H}}^p}{|t - s|^{(1 + 2p_0\gamma_1)p}|x - y|^{(1 + 2p_0\gamma_2)p}} \nonumber\\
 & \leq c_p \, (r\Delta_*)^{2(p - 1)}\int_{[0, r]^2}dtds \int_{[y_0, y_0 + \Delta_*]^2}dxdy\,\frac{|t - s|^{p_0p\theta_1}|x - y|^{p_0p\theta_2}}{|t - s|^{p(1 + 2p_0\gamma_1)}|x - y|^{p(1 + 2p_0\gamma_2)}}\nonumber \\
& \leq c_p \, (r\Delta_*)^{2p} \Delta_{\bullet}^{p(p_0\theta_1 - (1 + 2p_0\gamma_1))}\Delta_*^{p(p_0\theta_2 - (1 + 2p_0\gamma_2))}\nonumber \\
& \leq c_p \, r^{2p} \delta^{p(2p_0\theta_1 - 2(1 + 2p_0\gamma_1))}\delta^{p(p_0\theta_2 - (1 + 2p_0\gamma_2) + 2)}\nonumber \\
& = c_p \, r^{2p}\delta^{p(p_0(2\theta_1 + \theta_2) - 2p_0 (2\gamma_1 + \gamma_2) - 1)} = c_p \, r^{2p}\delta^{p(p_0 - \gamma_0)},
\end{align}
where the in the second inequality we use \eqref{eq1001}, and the derivation of the last equality follows the same reasoning as that of \eqref{eq2018-01-03-7}.

The proof of (c) is similar by using Lemma \ref{lemma13}, H\"{o}lder's inequality and \eqref{eq2016-06-14-1}, \eqref{eq1001} and \eqref{eq2017-09-28-2}.
\end{proof}

We proceed to introduce the random variables needed for Theorem \ref{th2017-08-24-2} to study the smoothness of densities of the random variables $F$ and $M_0$.
We define the function $\psi_0: \mathbb{R}^+ \rightarrow [0, 1]$ as an infinitely differentiable function such that
\begin{align} \label{eq2018-01-04-1}
\psi_0(x)
 =
\left\{\begin{array}{ll}
    0 & \hbox{if $x > 1$;} \\
   \psi_0(x) \in [0, 1]  & \hbox{if $x \in [\frac{1}{2}, 1]$,} \\
    1 & \hbox{if $x \leq \frac{1}{2}$.}
  \end{array}
\right.
\end{align}

We first introduce the random variables needed to prove the smoothness of density of $F$.
For $(z_1, z_2) \in \mathbb{R} \times ]0, \infty[$, set
\begin{align}\label{eq2017-09-28-4}
a = z_2/2 \quad \mbox{and} \quad A = \mathbb{R} \times ]a, \infty[.
\end{align}

Let $R = R(z_2, \delta_1)$ be defined as in Lemma \ref{lemma11}(a) for the specific value of $a$ in \eqref{eq2017-09-28-4}.
Define
\begin{align}\label{eq2017-10-08-1}
\psi(x):= \psi_0(x/R) \quad \mbox{so that} \quad \psi(x)
 =
\left\{\begin{array}{ll}
    0 & \hbox{if $x > R$;} \\
    \psi(x) \in [0, 1]  & \hbox{if $x \in [\frac{R}{2}, R]$,} \\
    1 & \hbox{if $x \leq \frac{R}{2}$}
  \end{array}
\right.
\end{align}
and
 \begin{align} \label{eq2017-11-26-1}
 \|\psi'\|_{\infty}:= \sup_{x \in \mathbb{R}}|\psi'(x)| \leq c \, R^{-1}
  \end{align}
  for a certain constant $c$ not depending on $z_2$.

If $I \times J \subset \, ]0, T] \times ]0, 1[$, let $c_1$, $C_1$, $c_2$, $C_2$ be as in  \eqref{eq2017-12-26-1} and \eqref{eq2017-12-26-3},  and $f_0: \mathbb{R} \mapsto [0, 1]$ be an infinitely differentiable function supported in $[c_1/2, (C_1 + T)/2]$ such that $f_0(t) = 1$, for all $t \in [c_1, C_1]$. Let $g_0: \mathbb{R} \mapsto [0, 1]$ be an infinitely differentiable function supported in $[c_2/2, (C_2 + 1)/2]$ such that $g_0(x) = 1$, for all $x \in [c_2, C_2]$. In the case of Neumann boundary conditions, if $I \subset \, ]0, T]$ and $y_0 = 0 \in J \subset [0, 1]$, we define $g_0$ to be an infinitely differentiable function with compact support such that $g_0(0) = 1$ and $g_0$ satisfies the same Neumann boundary conditions.

We define the $\mathscr{H}$-valued random variable $u_A^1$ evaluated at $(r, v)$ by
\begin{align}\label{eq2017-11-26-2}
u_A^1(r, v) &= \left(\frac{\partial}{\partial r} - \frac{\partial^2}{\partial v^2}\right)(f_0(r)g_0(v)).
\end{align}
In the case $I \times J \subset \, ]0, T] \times ]0, 1[$, from the choice of the functions $f_0$ and $g_0$, we see that there exists a constant $c$ such that for all $(s_0, y_0) \in I \times J$,
\begin{align}\label{eq2017-12-26-4}
\|u_A^1\|_{\mathscr{H}} \leq c.
\end{align}

Let $\phi_0 : \mathbb{R} \mapsto [0, 1]$ be an infinitely differentiable function supported in $[-1, 2]$ such that $\phi_0(v) = 1$, for all $v \in [0, 1]$.

For $y_0 \in J \subset [0, 1]$, we define $\phi_{\delta_1}$ as an infinitely differentiable function with compact support such that $\phi_{\delta_1}(y_0) = 1$ and satisfies the same boundary conditions at $0$ and $1$ as the Green kernel. In particular, if $J \subset \, ]0, 1[$ and $\delta_1$ satisfies the conditions in \eqref{eq2017-09-26}, then we choose the function $\phi_{\delta_1}$ in the following way:
\begin{align} \label{eq2017-11-26-3}
\phi_{\delta_1}(v) := \phi_0\left((v - y_0)/\delta_1^{1/2}\right), \quad v \in [0, 1],
\end{align}
so that, for some constant $c$,
\begin{align} \label{eq2017-11-26-4}
|\phi'_{\delta_1}(v)| \leq c\,\delta_1^{-1/2} \quad \mbox{and} \quad |\phi''_{\delta_1}(v)| \leq c\, \delta_1^{-1}, \quad \mbox{for all} \, \, v \in [0, 1].
\end{align}

Set
\begin{align}\label{eq2017-05-31-10000}
H(r, v) := \phi_{\delta_1}(v)\int_{s_0}^r\psi(Y_a)da, \quad (r, v) \in [s_0, s_0 + \delta_1] \times [0, 1].
\end{align}
We define the $\mathscr{H}$-valued random variable $u_A^2$ evaluated at $(r, v)$ by
\begin{align} \label{eq2017-11-26-5}
u_A^2(r, v)
 =
\left\{\begin{array}{ll}
    \left(\frac{\partial}{\partial r} - \frac{\partial^2}{\partial v^2}\right)H(r, v) & \hbox{if \, $(r, v) \in \, ]s_0, s_0 + \delta_1] \times [0, 1]$;} \\
   0 & \hbox{otherwise}.
  \end{array}
\right.
\end{align}
Finally, we define the random matrix $\gamma_A = (\gamma_A^{i, j})_{1 \leq i, j \leq 2}$ by
\begin{align} \label{eq2017-11-27-3}
\gamma_A = \left(
  \begin{array}{cc}
    1 & 0 \\
    0 & \int_{s_0}^{s_0 + \delta_1}\psi(Y_r)dr \\
  \end{array}
\right).
\end{align}

If $s_0 = 0 \in I \subset [0, T]$, we only consider the random variables $F_2$, $u_A^2$ and $\gamma_A^{2, 2}$ defined in \eqref{eq2017-12-19-2}, \eqref{eq2017-11-26-5} and \eqref{eq2017-11-27-3} with $s_0 = 0$, respectively.

We next introduce the random variables needed to prove the smoothness of density of $M_0$.
For $z \in \, ]0, \infty[$, set
\begin{align}\label{eq2017-09-28-411}
\bar{a} = z/2 \quad \mbox{and} \quad \bar{A} =  \, ]\bar{a}, \infty[.
\end{align}
Let $\bar{R} = \bar{R}(z, \delta)$ be defined as in Lemma \ref{lemma11}(b) for the specific value of $\bar{a}$ in \eqref{eq2017-09-28-411}.
Define
\begin{align}\label{eq2017-10-08-100}
\bar{\psi}(x):= \psi_0(x/\bar{R}) \quad \mbox{so that} \quad \bar{\psi}(x)
 =
\left\{\begin{array}{ll}
    0 & \hbox{if $x > \bar{R}$;} \\
    \bar{\psi}(x) \in [0, 1]  & \hbox{if $x \in [\frac{\bar{R}}{2}, \bar{R}]$,} \\
    1 & \hbox{if $x \leq \frac{\bar{R}}{2}$}
  \end{array}
\right.
\end{align}
and
 \begin{align} \label{eq2017-11-26-100}
 \|\bar{\psi}'\|_{\infty}:= \sup_{x \in \mathbb{R}}|\bar{\psi}'(x)| \leq c \, \bar{R}^{-1}
  \end{align}
  for a certain constant $c$ not depending on $z$.

 We define $\bar{\phi}_{\delta}$ as an infinitely differentiable function with compact support such that
 \begin{align} \label{eq2018-10-09-1}
 \bar{\phi}_{\delta}(v) = 1, \quad \mbox{for all} \, \,\,\, v \in [y_0, y_0 + \delta_2]
  \end{align}
  and satisfies the same boundary conditions at $0$ and $1$ as the Green kernel. In particular, if $J \subset \, ]0, 1[$ and $\delta_1$, $\delta_2$ satisfy the conditions in \eqref{eq2017-09-2600}, we choose the function $\bar{\phi}_{\delta}$ in the following way:
\begin{align} \label{eq2017-11-26-3000}
\bar{\phi}_{\delta}(v) := \phi_0\left((v - y_0)/\delta\right), \quad v \in [0, 1],
\end{align}
where the function $\phi_0$ is specified below \eqref{eq2017-12-26-4}, so that for some constant $c$,
\begin{align} \label{eq2017-11-26-4000}
|\bar{\phi}'_{\delta}(v)| \leq c\,\delta^{-1} \quad \mbox{and} \quad |\bar{\phi}''_{\delta}(v)| \leq c\, \delta^{-2}, \quad \mbox{for all} \, \, v \in [0, 1].
\end{align}

Set
\begin{align}\label{eq2017-05-31-1000011}
\bar{H}(r, v) := \bar{\phi}_{\delta}(v)\int_{0}^r\bar{\psi}(\bar{Y}_a)da, \quad (r, v) \in [0, \Delta_{\bullet}] \times [0, 1],
\end{align}
where $\{\bar{Y}_r: r \in [0, \Delta_{\bullet}]\}$ is defined in \eqref{eq2018-01-03-2}.
We define the $\mathscr{H}$-valued random variable $u_{\bar{A}}$ evaluated at $(r, v)$ by
\begin{align} \label{eq2017-11-26-511}
u_{\bar{A}}(r, v)
 =
\left\{\begin{array}{ll}
    \left(\frac{\partial}{\partial r} - \frac{\partial^2}{\partial v^2}\right)\bar{H}(r, v) & \hbox{if \, $(r, v) \in \, ]0, \Delta_{\bullet}] \times [0, 1]$;} \\
   0 & \hbox{otherwise}.
  \end{array}
\right.
\end{align}
Finally, we define the random variable
\begin{align} \label{eq2017-11-27-311}
\gamma_{\bar{A}} =  \int_{0}^{\Delta_{\bullet}}\bar{\psi}(\bar{Y}_r)dr.
 \end{align}

We now prove the smoothness of these random variables, as required in Theorem \ref{th2017-08-24-2}.

\begin{lemma}\label{lemma2017-1}
For $i, j \in \{1, 2\}$, $u_A^i \in \mathbb{D}^{\infty}(\mathscr{H})$, $\gamma_A^{i,j} \in \mathbb{D}^{\infty}$ and $u_{\bar{A}} \in \mathbb{D}^{\infty}(\mathscr{H})$, $\gamma_{\bar{A}} \in \mathbb{D}^{\infty}$.
\end{lemma}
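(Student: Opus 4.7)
The plan is as follows. First I would dispose of the purely deterministic objects: $\gamma_A^{1,1} = 1$ and $\gamma_A^{1,2} = \gamma_A^{2,1} = 0$ are constants, while $u_A^1(r, v) = (\partial_r - \partial_v^2)(f_0(r)g_0(v))$ is a non-random element of $\mathscr{H}$ thanks to the smoothness and compact support of $f_0$ and $g_0$. All four therefore lie in $\mathbb{D}^\infty$ (respectively $\mathbb{D}^\infty(\mathscr{H})$), with every Malliavin derivative of positive order equal to zero.

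For the four remaining objects the common engine will be the chain rule combined with Lemma \ref{lemma2017-10-2-1}(c). The cutoffs $\psi = \psi_0(\cdot/R)$ and $\bar\psi = \psi_0(\cdot/\bar R)$ are $C^\infty$ with bounded derivatives of every order, and $Y_r,\bar Y_r \in \mathbb{D}^\infty$ by Lemma \ref{lemma13}. Fa\`a di Bruno's formula then expresses $D^k[\psi(Y_r)]$ as a polynomial in $\psi^{(1)}(Y_r),\dots,\psi^{(k)}(Y_r)$ and the tensor products built from $DY_r,\dots,D^k Y_r$; bounding the scalar factors by $\|\psi^{(j)}\|_\infty$ and applying H\"older's inequality together with \eqref{eq2017-11-26-9} yields
\[
\sup_{r \in [s_0, s_0+\delta_1]} \|\psi(Y_r)\|_{k,p} < \infty, \qquad \sup_{r \in [0, \Delta_\bullet]} \|\bar\psi(\bar Y_r)\|_{k,p} < \infty
\]
for every integer $k \geq 0$ and every $p \geq 1$. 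Commuting $D^k$ with the Lebesgue integral in $r$, justified via the uniform bound just obtained and \cite[Proposition 3.4.3]{Nua18}, then gives $\gamma_A^{2,2} \in \mathbb{D}^\infty$ and $\gamma_{\bar A} \in \mathbb{D}^\infty$.

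For $u_A^2$ I would first compute
\[
u_A^2(r, v) = \1_{]s_0, s_0+\delta_1]}(r)\Big(\phi_{\delta_1}(v)\psi(Y_r) - \phi''_{\delta_1}(v)\int_{s_0}^r \psi(Y_a)\,da\Big),
\]
and the analogous expression for $u_{\bar A}$. Since $\phi_{\delta_1}$ and $\phi''_{\delta_1}$ are bounded deterministic functions on $[0, 1]$, Minkowski and Fubini reduce the $\mathbb{D}^{k,p}(\mathscr{H})$-norm of $u_A^2$ to $r$-integrals of the uniformly bounded Sobolev norms of $\psi(Y_r)$ and of $\int_{s_0}^r \psi(Y_a)\,da$, both finite by the previous paragraph; the same argument with $\bar\psi,\bar Y_r,\bar\phi_\delta$ handles $u_{\bar A}$. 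The main obstacle in the whole plan is the combinatorial bookkeeping around Fa\`a di Bruno's formula and the verification that every resulting product of Malliavin derivatives is $L^p$-dominated uniformly in $r$; once this is carried out, the interchange of $D^k$ with Lebesgue integration follows routinely from the closability of the Malliavin derivative together with the uniform estimate.
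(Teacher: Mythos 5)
Your proposal is correct and follows essentially the same route as the paper: Fa\`a di Bruno together with the uniform moment bounds \eqref{eq2017-11-26-9} gives $\gamma_A^{2,2},\gamma_{\bar A}\in\mathbb{D}^\infty$, and the same decomposition of $u_A^2$ (and $u_{\bar A}$) plus closability of $D$ gives the $\mathbb{D}^\infty(\mathscr{H})$ statements. The only difference is one of detail: where you invoke a ``routine'' interchange of $D^k$ with the Lebesgue integral, the paper carries this out explicitly by approximating $u_A^{21}$ and $u_A^{22}$ with Riemann sums $Y_n^1$, $Y_n^2$, using the continuity of $r\mapsto D\psi(Y_r)$ and a Vitali-type convergence argument before applying closability.
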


\begin{proof}
We first prove that $\gamma_A^{2,2} \in \mathbb{D}^{\infty}$. Recall from  \eqref{eq2017-11-27-3} that $\gamma_A^{2,2} = \int_{s_0}^{s_0 + \delta_1}\psi(Y_r)dr$. Similar as in the proof of Lemma \ref{lemma13}, by exchanging the order of Malliavin derivative and integral,  we obtain that
\begin{align} \label{eq2017-11-28-1}
D\gamma_A^{2,2} = \int_{s_0}^{s_0 + \delta_1}\psi'(Y_r)DY_rdr.
\end{align}
In order to prove that $\gamma_A^{2,2} \in \mathbb{D}^{\infty}$, we can repeat this procedure and it remains to prove that for any $q, j \geq 1$,
\begin{align}\label{4eq32}
\sup_{r \in [s_0, s_0 + \delta_1]}\mbox{E}[\|D^j\psi(Y_r)\|_{\mathscr{H}^{\otimes j}}^q] < \infty.
\end{align}
To prove (\ref{4eq32}), by the Fa\`{a} di Bruno formula (see formula [24.1.2] in \cite{AlS79}), we find that
\begin{align} \label{newnewneweq33}
\|D^j\psi(Y_{r})\|_{\mathscr{H}^{\otimes j}} \leq c_j \, \prod_{i = 1}^j\|D^iY_{r}\|_{\mathscr{H}^{\otimes i}}^{l_i},
\end{align}
where $l_i$, $i = 1, \ldots, j$, are integers. \eqref{newnewneweq33} and \eqref{eq2017-11-26-9} imply \eqref{4eq32}. Hence $\gamma_A^{2,2}$ belongs to $\mathbb{D}^{\infty}$.

We can prove $\gamma_{\bar{A}} \in \mathbb{D}^{\infty}$ similarly by using \eqref{eq2017-11-26-9}.

We proceed to prove that $u_A^2 \in \mathbb{D}^{\infty}(\mathscr{H})$. By the definition of $u_A^2$ in \eqref{eq2017-11-26-5}, we can write
\begin{align} \label{eq2017-11-26-8}
u_A^2(r, v) &= \psi(Y_r)1_{]s_0, s_0 + \delta_1]}(r)\phi_{\delta_1}(v) - 1_{]s_0, s_0 + \delta_1]}(r)\phi_{\delta_1}''(v)\int_{s_0}^{r}\psi(Y_a)da \nonumber \\
 &:= u_A^{21}(r, v) -  u_A^{22}(r, v).
\end{align}
We first prove that $u_A^{21} \in \mathbb{D}^{\infty}(\mathscr{H})$. For $n \geq 1$, we define
\begin{align*}
Y_n^1(r, v) := \sum_{k = 1}^n\psi(Y_{s_0 + \delta_1k/n})1_{]s_0 + \delta_1(k - 1)/n, s_0 + \delta_1k/n]}(r)\phi_{\delta_1}(v).
\end{align*}
For almost every $(\omega, r, v) \in \Omega \times [0, T] \times [0, 1]$, $Y_n^1(r, v)$ converges to  $u_A^{21}(r, v)$ as $n \rightarrow \infty$. By the dominated convergence theorem, $Y_n^1$ converges to  $u_A^{21}$ in $L^p(\Omega; \mathscr{H})$ for any $p \geq 1$ as $n \rightarrow \infty$. Since for any $r \in [s_0, s_0 + \delta_1], Y_r \in \mathbb{D}^{\infty}$, by (\ref{4eq32}) and chain rule, we know that for any $r \in [s_0, s_0 + \delta_1], \psi(Y_r) \in \mathbb{D}^{\infty}$.

It is not difficult to check the following property: if $Z \in \mathbb{D}^{\infty}$ and $h \in \mathscr{H}$, then $Zh$ belongs to $\mathbb{D}^{\infty}(\mathscr{H})$.
Applying this property, we see that $Y_n^1$ belongs to $\mathbb{D}^{\infty}(\mathscr{H})$ and
\begin{align*}
DY_n^1(\cdot, *)= \sum_{k = 1}^nD\psi(Y_{s_0 + \delta_1k/n})1_{]s_0 + \delta_1(k - 1)/n, s_0 + \delta_1k/n]}(\cdot)\phi_{\delta_1}(*).
 \end{align*}
 For almost every $(\omega, r, v) \in \Omega \times [0, T] \times [0, 1]$, $DY_n^1(r, v)$ converges to $D\psi(Y_r)1_{]s_0, s_0 + \delta_1]}(r)\phi_{\delta_1}(v)$ as $n \rightarrow \infty$ since $r \mapsto D\psi(Y_r)$ is continuous. Moreover, by H\"{o}lder's inequality, for any $q \geq 1$,
\begin{align*}
\mbox{E}\left[\int_0^T\int_0^1\|DY_n^1(r, v)\|_{\mathscr{H}}^qdrdv\right]
&= \sum_{k = 1}^n\mbox{E}\left[\int_{s_0 + (k - 1)\delta_1/n}^{s_0 + k\delta_1/n}\|D\psi(Y_{s_0 + \delta_1k/n})\|_{\mathscr{H}}^qdr\right]\int_0^1\phi_{\delta_1}^q(v)dv\\
&\leq c \sum_{k = 1}^n\int_{s_0 + (k - 1)\delta_1/n}^{s_0 + k\delta_1/n} \sup_{r \in [s_0, s_0 + \delta_1]}\mbox{E}\left[\|DY_{r}\|_{\mathscr{H}}^q\right]dr\leq c',
\end{align*}
where the last inequality follows from (\ref{eq2017-11-26-9}). Applying \cite[Proposition 1.1]{ChW90} (with the measure space replaced by $(\Omega\times [0, T] \times [0, 1], \mbox{P}\times \lambda^2)$, where $\lambda^2$ is the Lebesgue measure on $[0, T] \times [0, 1]$), we have for any $q \geq 1$,
\begin{align*}
\lim_{n \rightarrow \infty}\mbox{E}\left[\int_0^T\int_0^1\|DY^1_n(r, v) - D\psi(Y_r)1_{]s_0, s_0 + \delta_1]}(r)\phi_{\delta_1}(v)\|_{\mathscr{H}}^qdrdv\right] = 0,
\end{align*}
which implies
\begin{align*}
\lim_{n \rightarrow \infty}\mbox{E}\left[\left(\int_0^T\int_0^1\|DY^1_n(r, v) - D\psi(Y_r)1_{]s_0, s_0 + \delta_1]}(r)\phi_{\delta_1}(v)\|_{\mathscr{H}}^2drdv\right)^{q/2}\right] = 0.
\end{align*}
Thus for any $q \geq 1$, $DY^1_n(\cdot, *)$ converges to $D\psi(Y_{\cdot})1_{]s_0, s_0 + \delta_1]}(\cdot)\phi_{\delta_1}(*)$ in $L^q(\Omega, \mathscr{H}^{\otimes 2})$ as $n \rightarrow \infty$. Since $D$ is closable, we obtain
\begin{align*}
Du_A^{21}(\cdot, *) = D\psi(Y_{\cdot})1_{]s_0, s_0 + \delta_1]}(\cdot)\phi_{\delta_1}(*).
\end{align*}
We repeat this procedure and apply (\ref{4eq32}) to conclude $u_A^{21} \in \mathbb{D}^{\infty}(\mathscr{H})$.

The proof for $u_A^{22} \in \mathbb{D}^{\infty}(\mathscr{H})$ is similar. We discretize $u_A^{22}(r, v)$ by
\begin{align*}
Y_n^2(r, v) := \sum_{k = 1}^n\int_{s_0}^{s_0 + k\delta_1/n}\psi(Y_{a})da \, 1_{]s_0 + (k - 1)\delta_1/n, s_0 + k\delta_1/n]}(r)\phi_{\delta_1}''(v).
\end{align*}
In fact, the proof of $\gamma_A^{2, 2} \in \mathbb{D}^{\infty}$ indicates that for any $r \in [s_0, s_0 + \delta_1]$, $\int_{s_0}^r\psi(Y_a)da \in \mathbb{D}^\infty$. Hence applying the property again, we see that $Y_n^2 \in \mathbb{D}^{\infty}(\mathscr{H})$. Similarly, we apply (\ref{4eq32}) again to conclude $u_A^{22} \in \mathbb{D}^{\infty}(\mathscr{H})$.

The proof of $u_{\bar{A}} \in \mathbb{D}^{\infty}(\mathscr{H})$ is similar.
\end{proof}

The following results give some estimates on the $L^p(\Omega)$-norm of $(\gamma_A^{2,2})^{-1}$ and $\gamma_{\bar{A}}^{-1}$.
\begin{lemma} \label{lemma2017-05-12-1}
\begin{itemize}
  \item [(a)]
  The random variable $\gamma_A^{2,2}$ has finite negative moments of all orders. Furthermore,
for any $p \geq 1$, there exists a constant $c_p$, not depending on $(s_0, y_0) \in I \times J$, such that for all small $\delta_1 > 0$ and for $z_2 \geq \delta_1^{1/4}$,
\begin{align} \label{eq2017-6}
\|(\gamma_A^{2,2})^{-1}\|_{L^p(\Omega)} \leq c_p \, \delta_1^{-1}.
\end{align}
\item [(b)]The random variable $\gamma_{\bar{A}}$ has finite negative moments of all orders. Furthermore,
for any $p \geq 1$, there exists a constant $c_p$, not depending on $y_0 \in J$, such that for all small $\delta_1, \, \delta_2 > 0$ and for $z \geq (\delta_1^{1/2} + \delta_2)^{1/2}$,
\begin{align}\label{eq2018-01-02-14}
\|\gamma_{\bar{A}}^{-1}\|_{L^p(\Omega)} \leq c_p \, (\delta_1^{1/2} + \delta_2)^{-2}.
\end{align}
\end{itemize}
\end{lemma}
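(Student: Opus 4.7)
The plan is to exploit the fact that $r \mapsto Y_r$ is continuous and monotone nondecreasing (since the integrand in \eqref{eq2017-09-28-1} is nonnegative), with $Y_{s_0}=0$, and similarly for $r \mapsto \bar Y_r$. This reduces estimating the negative moments of $\gamma_A^{2,2}$ and $\gamma_{\bar A}$ to a first-exceedance problem. I would introduce the hitting times
\[
\tau := \inf\{r \in [s_0, s_0+\delta_1]: Y_r > R/2\} \wedge (s_0 + \delta_1), \qquad \bar\tau := \inf\{r \in [0, \Delta_\bullet]: \bar Y_r > \bar R/2\} \wedge \Delta_\bullet,
\]
and observe that, by the shape of the cut-off $\psi$ in \eqref{eq2017-10-08-1} (resp.\ $\bar\psi$ in \eqref{eq2017-10-08-100}), one has $\psi(Y_r) = 1$ on $[s_0, \tau)$ and $\bar\psi(\bar Y_r) = 1$ on $[0, \bar\tau)$. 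Therefore $\gamma_A^{2,2} \geq \tau - s_0$ and $\gamma_{\bar A} \geq \bar\tau$, so it suffices to bound the negative moments of these two hitting times.

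For part (a), monotonicity of $Y$ gives the inclusion $\{\tau - s_0 < \epsilon\} \subset \{Y_{s_0+\epsilon} > R/2\}$, so Markov's inequality combined with Lemma~\ref{lemma2018-10-09-1}(a) yields, for any integer $q \geq 1$,
\[
\mathrm{P}(\tau - s_0 < \epsilon) \leq c_q\, R^{-q}\epsilon^{2q}\delta_1^{(p_0-\gamma_0)q/2}.
\]
Next, using the definition of $R$ in Lemma~\ref{lemma11}(a) with $a = z_2/2$, together with the hypothesis $z_2 \geq \delta_1^{1/4}$, I would show $R \geq c\,\delta_1^{(p_0-\gamma_0+4)/2}$, which collapses the previous bound to the dimensionally clean $c_q(\epsilon/\delta_1)^{2q}$. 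Finally, writing
\[
\mathrm{E}[(\tau-s_0)^{-p}] = p\int_0^\infty \epsilon^{-p-1}\,\mathrm{P}(\tau - s_0 < \epsilon)\,d\epsilon
\]
and splitting the integral at $\epsilon = \delta_1$ (using the trivial bound $\mathrm{P}(\cdot) \leq 1$ for $\epsilon > \delta_1$ and the sharp bound for $\epsilon \leq \delta_1$, with $q$ chosen so that $2q > p$), I obtain $\mathrm{E}[(\tau - s_0)^{-p}] \leq c_p \delta_1^{-p}$, which is \eqref{eq2017-6}.

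Part (b) will be carried out identically. Lemma~\ref{lemma2018-10-09-1}(b) gives $\mathrm{E}[\bar Y_\epsilon^q] \leq c_q\epsilon^{2q}\delta^{q(p_0-\gamma_0)}$, while the constraint $z \geq \delta^{1/2}$ in Lemma~\ref{lemma11}(b) produces $\bar R \geq c\,\delta^{p_0-\gamma_0+4}$, so that the analogous Markov estimate collapses to $\mathrm{P}(\bar\tau < \epsilon) \leq c_q(\epsilon/\Delta_\bullet)^{2q}$; the same split-integral argument then yields $\mathrm{E}[\bar\tau^{-p}] \leq c_p\,\Delta_\bullet^{-p} = c_p(\delta_1^{1/2}+\delta_2)^{-2p}$, which is \eqref{eq2018-01-02-14}. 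I do not expect a genuine obstacle; the main thing to verify carefully is that the threshold conditions $z_2 \geq \delta_1^{1/4}$ and $z \geq (\delta_1^{1/2}+\delta_2)^{1/2}$ are precisely the thresholds at which $R$ (resp.\ $\bar R$) becomes large enough, relative to the natural scaling of $Y$ (resp.\ $\bar Y$) given by Lemma~\ref{lemma2018-10-09-1}, for the tail probability to reduce to a pure power of $\epsilon/\delta_1$ (resp.\ $\epsilon/\Delta_\bullet$), after which the negative moments come out with the required powers of $\delta_1$ and $\delta_1^{1/2}+\delta_2$.
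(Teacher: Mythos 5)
Your proposal is correct and is essentially the paper's own argument: the paper lower-bounds $\gamma_A^{2,2}$ by $\bar X=\int_{s_0}^{s_0+\delta_1}1_{\{Y_r\le R/2\}}\,dr$, which by monotonicity of $r\mapsto Y_r$ is exactly your $\tau-s_0$, and then combines Markov's inequality with Lemma \ref{lemma2018-10-09-1}, the definition of $R$ together with $z_2\ge\delta_1^{1/4}$ (resp.\ $\bar R$ and $z\ge\delta^{1/2}$), and the same split of the tail integral at $\epsilon=\delta_1$ (resp.\ $\Delta_\bullet$). The only cosmetic differences are that you phrase the lower bound via a hitting time rather than an occupation measure and substitute the value of $R$ earlier in the computation.
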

\begin{proof}
We first prove (a). By the definition of the function $\psi$ in \eqref{eq2017-10-08-1},
\begin{align*}
\gamma_A^{2,2} \geq \int_{s_0}^{s_0 + \delta_1} 1_{\{Y_r \leq \frac{R}{2}\}}dr := \bar{X}.
\end{align*}
For $\epsilon < \delta_1$ and any $q \geq 1$, since $r \mapsto Y_r$ is increasing, we have
\begin{align}
\mbox{P}\{\bar{X} < \epsilon\}
&\leq \mbox{P}\{Y_{s_0 + \epsilon} \geq R/2\} \leq (2/R)^q \mbox{E}[|Y_{s_0 + \epsilon}|^q] \leq c_q \, R^{-q}\epsilon^{2q}\delta_1^{(p_0 - \gamma_0)q/2},
\end{align}
where in the second inequality we use Markov's inequality, and the last inequality is because of \eqref{eq2018-01-03-6}.
% Using H\"{o}lder's inequality, this is bounded above by
%\begin{align}
% & cR^{-q}(\varepsilon^2)^{q-1}\int_{[s_0, s_0 + \varepsilon]^2}dtds\frac{\mbox{E}[|u(t, y_0) - u(s, y_0)|^{2p_0q}]}{|t - s|^{\gamma_0q/2}}             \nonumber    \\
%&\quad \leq c R^{-q}\varepsilon^{2q}\varepsilon^{(p_0 - \gamma_0)q/2} \label{eq2018-01-02-10}
%\end{align}
This shows that the random variable $\gamma_A^{2,2}$ has finite negative moments of all orders by \cite[Chapter 3, Lemma 4.4]{DKMNXR08}. Moreover, for any $p \geq 1$ and $q > p/2$,
\begin{align*}
\mbox{E}[\bar{X}^{-p}] &= p\int_0^{\infty}y^{p - 1}\mbox{P}(\bar{X}^{-1} > y)dy  \nonumber\\
&= p\int_0^{\delta_1^{-1}}y^{p - 1}\mbox{P}(\bar{X}^{-1} > y)dy + p\int_{\delta_1^{-1}}^{\infty}y^{p - 1}\mbox{P}(\bar{X}^{-1} > y)dy\nonumber\\
&\leq c \, \delta_1^{-p} + cR^{-q} \delta_1^{(p_0 - \gamma_0)q/2} \int_{\delta_1^{-1}}^{\infty}y^{p - 1}y^{-2q}dy \\
& = c \, \delta_1^{-p} + cR^{-q} \delta_1^{(p_0 - \gamma_0 + 4)q/2 - p}.
\end{align*}
Using the definition of $R$ in \eqref{eq2017-10-12-5}, this is equal to
\begin{align*}
c\,  \delta_1^{-p}\left(1 + a^{-2p_0q}\delta_1^{(\gamma_0 - 4)q/2}\delta_1^{(p_0 - \gamma_0 + 4)q/2}\right).
\end{align*}
Under the assumption $z_2 \geq \delta_1^{1/4}$, by \eqref{eq2017-09-28-4}, this is bounded above by
\begin{align*}
c\,  \delta_1^{-p}(1 + \delta_1^{-\frac{1}{4} \times 2p_0q}\delta_1^{(\gamma_0 - 4)q/2}\delta_1^{(p_0 - \gamma_0 + 4)q/2}) = 2c\,  \delta_1^{-p},
\end{align*}
which implies (\ref{eq2017-6}).

We proceed to prove (b). Similarly, by the definition of the function $\bar{\psi}$ in \eqref{eq2017-10-08-100},
\begin{align*}
\gamma_{\bar{A}} \geq \int_0^{\Delta_{\bullet}}1_{\{\bar{Y}_r \leq \frac{\bar{R}}{2}\}}dr:= \tilde{X}.
\end{align*}
For any $0 < \epsilon < \Delta_{\bullet}$, since $r \mapsto \bar{Y}_r$ is increasing,
\begin{align}
\mbox{P}\{\tilde{X} < \epsilon \} &\leq \mbox{P}\{\bar{Y}_{\epsilon} \geq \bar{R}/2\}  \leq (2/\bar{R})^{q}\mbox{E}[\bar{Y}_{\epsilon}^q] \leq c_q \bar{R}^{-q}\, \epsilon^{2q}\delta^{(p_0 - \gamma_0)q},
\end{align}
where, in the last inequality, we use \eqref{eq2018-01-03-8}. Hence the random variable $\gamma_{\bar{A}}$ has finite negative moments of all orders by \cite[Chapter 3, Lemma 4.4]{DKMNXR08}. Moreover, for any $p \geq 1$ and $q > p/2$,
\begin{align*}
\mbox{E}[\tilde{X}^{-p}] &= p\int_0^{\infty}y^{p - 1}\mbox{P}(\tilde{X}^{-1} > y)dy  \nonumber\\
&= p\int_0^{\Delta_{\bullet}^{-1}}y^{p - 1}\mbox{P}(\tilde{X}^{-1} > y)dy + p\int_{\Delta_{\bullet}^{-1}}^{\infty}y^{p - 1}\mbox{P}(\tilde{X}^{-1} > y)dy\nonumber\\
&\leq c \, \Delta_{\bullet}^{-p} + c\bar{R}^{-q}\delta^{(p_0 - \gamma_0)q}\int_{\Delta_{\bullet}^{-1}}^{\infty}y^{p - 1}y^{-2q}dy \\
& = c \, \Delta_{\bullet}^{-p} + c\bar{R}^{-q}\delta^{(p_0 - \gamma_0)q} \Delta_{\bullet}^{2q - p}.
\end{align*}
Using the definition of $\bar{R}$ in \eqref{eq2017-10-12-500}, this is equal to
\begin{align*}
c\,  \Delta_{\bullet}^{-p}\left(1 + \bar{a}^{-2p_0q}\delta^{-(4 -\gamma_0)q}\delta^{(p_0 - \gamma_0)q} \Delta_{\bullet}^{2q}\right).
\end{align*}
Under the assumption $z \geq \delta^{1/2} = (\delta_1^{1/2} + \delta_2)^{1/2}$, by \eqref{eq2017-09-28-411} and \eqref{eq2018-01-03-1}, this is bounded above by
\begin{align*}
c\,  \Delta_{\bullet}^{-p}(1 + \delta^{-\frac{1}{2} \times 2p_0q}\delta^{-(4 -\gamma_0)q}\delta^{(p_0 - \gamma_0)q} \Delta_{\bullet}^{2q}) = 2c\,  \Delta_{\bullet}^{-p},
\end{align*}
which implies \eqref{eq2018-01-02-14}.
\end{proof}

Now we are ready to verify that the random variables introduced above satisfy the condition (iii) of Theorem \ref{th2017-08-24-2}.

\begin{lemma}\label{lemma2017-2}
\begin{itemize}
  \item [(a)]Let $A$ be the set defined in \eqref{eq2017-09-28-4}.  On the event $\{F \in A\} = \{F_2 > a\}$,
$\langle DF_i, u_A^j\rangle_{\mathscr{H}} = \gamma_A^{i, j}$ for $i, j \in \{1, 2\}.$
  \item [(b)]Let $\bar{A}$ be the set defined in \eqref{eq2017-09-28-411}.  On the event $\{M_0 \in \bar{A}\} = \{M_0 > \bar{a}\}$, $\langle DM_0, u_{\bar{A}} \rangle_{\mathscr{H}} = \gamma_{\bar{A}}$.
\end{itemize}

\end{lemma}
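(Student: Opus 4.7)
The strategy is to verify each entry pointwise (pathwise) by identifying the $\mathscr{H}$-inner product as an evaluation of a ``test function'' at the random argmax via Lemma \ref{lemma2017-11-27-1}, and then using the monotonicity of $r \mapsto Y_r$ (resp.\ $r \mapsto \bar Y_r$) together with the contrapositive of Lemma \ref{lemma11} to move the upper limit of integration from the argmax to the endpoint of the interval, which is exactly where the cutoff functions $\psi$ and $\bar\psi$ were engineered to vanish.

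For part (a) I treat the four entries of $\gamma_A$ separately. Using the formula for $DF_1$ in Lemma \ref{lemma9}, the inner product $\langle DF_1, u_A^1\rangle_\mathscr{H}$ reduces to $\int_0^{s_0}\!\int_0^1 G(s_0-r,y_0,v)(\partial_r-\partial_v^2)(f_0(r)g_0(v))\,dv\,dr$, which by Lemma \ref{lemma2017-11-27-1} equals $f_0(s_0)g_0(y_0)=1=\gamma_A^{1,1}$ by the defining properties of $f_0$ and $g_0$; note that $g_0$ is built precisely to satisfy the same boundary conditions as $G$, so Lemma \ref{lemma2017-11-27-1} applies. Applying the same identity to each of the two pieces of $DF_2 = 1_{\{\cdot<S\}}G(S-\cdot,y_0,*) - 1_{\{\cdot<s_0\}}G(s_0-\cdot,y_0,*)$ when paired with $u_A^1$ gives $\langle DF_2,u_A^1\rangle_\mathscr{H} = f_0(S)g_0(y_0) - f_0(s_0)g_0(y_0) = 0 = \gamma_A^{2,1}$, since $S \in [s_0,s_0+\delta_1]\subset I$. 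The identity $\gamma_A^{1,2}=0$ is immediate, as $u_A^2$ vanishes on $\{r\le s_0\}$ while $DF_1$ is supported there.

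The crucial entry is $\gamma_A^{2,2}$, which I expect to be the main point. Only the first term of $DF_2$ contributes (the support of $u_A^2$ kills the second), and on $\{F_2>a\}$ one has $S>s_0$. Setting pathwise $f(r) = \int_{s_0}^r\psi(Y_a)\,da$ (which is $C^1$ because $r \mapsto Y_r$ is continuous and $\psi$ is smooth) and $g = \phi_{\delta_1}$, a change of variable $r \mapsto r - s_0$ brings $\langle DF_2, u_A^2\rangle_\mathscr{H} = \int_{s_0}^{S}\!\int_0^1 G(S-r,y_0,v)(\partial_r-\partial_v^2)(f(r)g(v))\,dv\,dr$ into the exact form of Lemma \ref{lemma2017-11-27-1} (after translation of the time origin), yielding $f(S)\phi_{\delta_1}(y_0) = \int_{s_0}^S\psi(Y_a)\,da$, since $\phi_{\delta_1}(y_0)=1$. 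To upgrade the upper limit from $S$ to $s_0+\delta_1$ I use that $r \mapsto Y_r$ is non-decreasing and that the contrapositive of Lemma \ref{lemma11}(a) forces $Y_r > R$ for every $r\ge S$, because $\sup_{t\in[s_0,r]}|\bar u(t,y_0)| \ge \bar u(S,y_0) = F_2 > a$. Hence $\psi(Y_r) = 0$ on $[S, s_0+\delta_1]$, and $\int_{s_0}^S\psi(Y_a)\,da = \int_{s_0}^{s_0+\delta_1}\psi(Y_a)\,da = \gamma_A^{2,2}$.

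Part (b) follows the same scheme, with no time shift needed. Setting $\bar f(r) = \int_0^r\bar\psi(\bar Y_a)\,da$ and $\bar g = \bar\phi_\delta$, Lemmas \ref{lemma2017-11-27-1}, \ref{lemma9}, \ref{lemma100}(b) and the defining property \eqref{eq2018-10-09-1} (which gives $\bar\phi_\delta(\bar X)=1$ since $\bar X\in[y_0,y_0+\delta_2]$) yield $\langle DM_0, u_{\bar A}\rangle_\mathscr{H} = \bar f(\bar S)\bar\phi_\delta(\bar X) = \int_0^{\bar S}\bar\psi(\bar Y_r)\,dr$. On $\{M_0>\bar a\}$ the contrapositive of Lemma \ref{lemma11}(b) gives $\bar\psi(\bar Y_r)=0$ for every $r\in[\bar S,\Delta_\bullet]$, so the integral extends to $\Delta_\bullet$ and equals $\gamma_{\bar A}$. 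The only mildly delicate point throughout is that Lemma \ref{lemma2017-11-27-1} is deterministic while $f,\bar f$ are random, but this is not a true obstacle: the lemma can be applied $\omega$ by $\omega$ once pathwise continuity of $Y$ and $\bar Y$ is observed.
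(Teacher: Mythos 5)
Your proposal is correct and follows essentially the same route as the paper: identify each inner product via Lemma \ref{lemma2017-11-27-1} (applied pathwise, with the time shift for the $u_A^2$ entry), use the support of $u_A^2$ and the normalizations $f_0(S)g_0(y_0)=f_0(s_0)g_0(y_0)=\phi_{\delta_1}(y_0)=\bar\phi_\delta(\bar X)=1$, and then extend the integral from the argmax to the endpoint by noting that Lemma \ref{lemma11} forces $\psi(Y_r)=0$ (resp.\ $\bar\psi(\bar Y_r)=0$) for $r\ge S$ (resp.\ $r\ge\bar S$) on the relevant event. The paper phrases this last step as a proof by contradiction rather than a contrapositive, but the argument is identical.
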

\begin{proof}
We first prove (a).
If $s_0 > 0$, by the definition of $u_A^1$ in \eqref{eq2017-11-26-2} and of the functions $f_0$, $g_0$, and by Lemma \ref{lemma2017-11-27-1}, we have
\begin{align}\label{eq2017-08-28-1}
\langle DF_1, u_A^1\rangle_{\mathscr{H}} &= \int_0^{s_0}\int_0^1G(s_0 - r, y_0, v)\left(\frac{\partial}{\partial r} - \frac{\partial^2}{\partial v^2}\right)(f_0(r)g_0(v))drdv\\
&= f_0(s_0)g_0(y_0) =1 = \gamma_A^{1, 1}. \nonumber
\end{align}
Second, from the definition of $u_A^2$ in \eqref{eq2017-11-26-5}, it is obvious that
\begin{align} \label{eq2017-08-28-2}
\langle DF_1, u_A^2\rangle_{\mathscr{H}} &=\int_0^{s_0}\int_0^1G(s_0 - r, y_0, v)
\left(\frac{\partial}{\partial r} - \frac{\partial^2}{\partial v^2}\right)H(r, v)1_{\{s_0 < r \leq s_0 + \delta_1\}}drdv =0.
\end{align}
By Lemmas \ref{lemma9} and  \ref{lemma2017-11-27-1},
\begin{align}\label{eq2017-08-28-3}
\langle DF_2, u_A^1\rangle_{\mathscr{H}} &= \int_0^{S}\int_0^1G(S - r, y_0, v)\left(\frac{\partial}{\partial r} - \frac{\partial^2}{\partial v^2}\right)(f_0(r)g_0(v))drdv \nonumber\\
&\quad - \int_0^{s_0}\int_0^1G(s_0 - r, y_0, v)\left(\frac{\partial}{\partial r} - \frac{\partial^2}{\partial v^2}\right)(f_0(r)g_0(v))drdv\\
&= f_0(S)g_0(y_0) - f_0(s_0)g_0(y_0) = 1 - 1=0. \nonumber
\end{align}
Furthermore, by Lemmas \ref{lemma9} and \ref{lemma2017-11-27-1}, for both cases $s_0 > 0$ and $s_0 = 0$,
\begin{align}\label{eq2017-08-28-4}
\langle DF_2, u_A^2\rangle_{\mathscr{H}} &= \int_0^Sdr\int_0^1dv \, G(S - r, y_0, v)u_A^2(r, v) - \int_0^{s_0}dr\int_0^1dv\, G(s_0 - r, y_0, v)u_A^2(r, v)\nonumber\\
&= \int_{s_0}^Sdr\int_0^1dv \, G(S - r, y_0, v)\left(\frac{\partial}{\partial r} - \frac{\partial^2}{\partial v^2}\right)H(r, v) - 0 \nonumber \\
&= \int_{0}^{S - s_0}dr\int_0^1dv \, G(S - s_0 - r, y_0, v)\left(\frac{\partial}{\partial r} - \frac{\partial^2}{\partial v^2}\right)H(s_0 + r, v) \nonumber \\
& = H(S, y_0).
\end{align}
Therefore,
\begin{align} \label{eq2017-10-02-1}
\langle DF_2, u_A^2\rangle_{\mathscr{H}} &= \phi_{\delta_1}(y_0)\int_{s_0}^S\psi(Y_r)dr= \int_{s_0}^S\psi(Y_r)dr,
\end{align}
where, in the second equality, we use the fact that $\phi_{\delta_1}(y_0) = 1$. Moreover, on the event $\{F \in A\} = \{F_2 > a\}$, we observe that if $r > S \geq s_0$, then $\psi(Y_r) = 0$. Otherwise, we would have $\psi(Y_r) > 0$, hence $Y_r \leq R$ for some $r > S$, and by Lemma \ref{lemma11}(a), this implies that
\begin{align*}
F_2 = \bar{u}(S, y_0) = \sup_{t \in [s_0, r]}\bar{u}(t, y_0) \leq a < F_2,
 \end{align*}
 which is a contradiction. Hence, on $\{F \in A\} = \{F_2 > a\}$,  the last integral in \eqref{eq2017-10-02-1} is equal to
$
\int_{s_0}^{s_0 + \delta_1}\psi(Y_r)dr = \gamma_A^{2, 2}.
$
This completes the proof of (a).

We now prove (b). By Lemma \ref{lemma9},
\begin{align}\label{eq2018-01-02-13}
\langle DM_0, u_{\bar{A}} \rangle_{\mathscr{H}} & = \int_0^{\bar{S}}\int_0^1 G(\bar{S} - r, \bar{X}, v)\left(\frac{\partial}{\partial r} - \frac{\partial^2}{\partial v^2}\right)\bar{H}(r, v) dvdr \nonumber \\
& = \bar{H}(\bar{S}, \bar{X}) = \bar{\phi}_{\delta}(\bar{X})\int_0^{\bar{S}}\bar{\psi}(\bar{Y}_r)dr.
\end{align}
Since $\bar{X} \in [y_0, y_0 + \delta_2]$, the definition of the function $\bar{\phi}_{\delta}$ in \eqref{eq2018-10-09-1} implies that $\bar{\phi}_{\delta}(\bar{X}) \equiv 1$. Hence,
\begin{align*}
\langle DM_0, u_{\bar{A}} \rangle_{\mathscr{H}} = \int_0^{\bar{S}}\bar{\psi}(\bar{Y}_r)dr.
\end{align*}
On the event $\{M_0 > \bar{a}\}$, for $r > \bar{S}$, we have $\bar{\psi}(\bar{Y}_r) = 0$. Otherwise, we would have $\bar{\psi}(\bar{Y}_r) > 0$, hence $\bar{Y}_r \leq \bar{R}$ and by Lemma \ref{lemma11}(b), this implies that
 \begin{align*}
 M_0 = u(\bar{S}, \bar{X}) = \sup_{(t, x) \in [0, r] \times [y_0, y_0 + \delta_2]}u(t, x) \leq \bar{a} < M_0,
 \end{align*}
which is a contradiction. Therefore, on the event $\{M_0 \in \bar{A}\}$,
\begin{align*}
\langle DM_0, u_{\bar{A}} \rangle_{\mathscr{H}} = \int_0^{\Delta_{\bullet}}\bar{\psi}(\bar{Y}_r)dr = \gamma_{\bar{A}}.
\end{align*}
This proves (b).
\end{proof}

\begin{proof}[Proof of Theorem \ref{the2017-11-25-1}]
(a) The strict positivity of $F_2$ has been proved in Lemma \ref{lemma100}(a). For $(s_0, y_0) \in I \times J \subset [0, T] \times [0, 1]$ with $s_0 > 0$, by Lemmas \ref{lemma2017-1},  \ref{lemma2017-05-12-1}(a), \ref{lemma2017-2}(a) and Theorem \ref{th2017-08-24-2}, the random vector $F$ has an infinitely differentiable density on $\mathbb{R} \times ]z_2/2, \infty[$. Since the choice of $z_2$ is arbitrary, the random vector $F$ possesses an infinitely differentiable density on $\mathbb{R} \times ]0, \infty[$. Using the same argument, if $s_0 = 0$, then the random variable $F_2$ has an infinitely differentiable density on $ ]0, \infty[$.

(b) The strict positivity of $M_0$ has been proved in Lemma \ref{lemma100}(b).
The proof of smoothness of the density of $M_0$ is similar to that of Theorem \ref{the2017-11-25-1}(a) by using Lemmas \ref{lemma2017-1},  \ref{lemma2017-05-12-1}(b), \ref{lemma2017-2}(b) and Theorem \ref{th2017-08-24-2}.
\end{proof}

\section{Formulas for the densities of $F$ and $M_0$}
We now derive the expression for the probability density functions of $F$ and $M_0$ from the integration by parts formula; see \cite[(2.25)]{Nua06}.
\begin{prop} \label{prop2017-10-02-1}
\begin{itemize}
  \item [(a)]The probability density function of $F$ at $(z_1, z_2) \in \mathbb{R} \times ]0, \infty[$ is given by
\begin{align} \label{eq2017-2}
p(z_1, z_2) = \mbox{E}\left[1_{\{F_1 > z_1, F_2 > z_2\}}\delta\left(u_A^1\delta\left(u_A^2/\gamma_A^{2, 2}\right)\right)\right].
\end{align}
  \item [(b)] The probability density function of $M_0$ at $z \in \, ]0, \infty[$ is given by
  \begin{align}\label{eq2018-01-02-15}
p_0(z) = \mbox{E}[1_{\{M_0 > z\}}\delta(u_{\bar{A}}/\gamma_{\bar{A}})].
\end{align}
\end{itemize}

\end{prop}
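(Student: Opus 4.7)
The plan is to derive both formulas from the Malliavin integration by parts identity $\mbox{E}[\langle DX,v\rangle_{\mathscr{H}}]=\mbox{E}[X\,\delta(v)]$, combined with Lemma \ref{lemma2017-2} which identifies $\langle DF_i,u_A^j\rangle_{\mathscr{H}}$ with $\gamma_A^{i,j}$ on $\{F\in A\}$, and then to pass to a limit in a smooth approximation of the relevant indicator function, following the scheme of the proof of \cite[Theorem 2.1.4]{Nua06}. The key structural feature is that $\gamma_A$ in \eqref{eq2017-11-27-3} is diagonal with $\gamma_A^{1,1}=1$, so $u_A^1$ acts as an inverse to $\partial_1$ and $u_A^2/\gamma_A^{2,2}$ as an inverse to $\partial_2$.

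For part (a), let $\Psi_{\varepsilon}:\mathbb{R}\to[0,1]$ be a smooth nondecreasing approximation of $\mathbf{1}_{[0,\infty)}$ supported in $[-\varepsilon,\infty)$, and set $\phi_{\varepsilon}(x_1,x_2):=\Psi_{\varepsilon}(x_1-z_1)\Psi_{\varepsilon}(x_2-z_2)$. For $0<\varepsilon<z_2/2$ the support of $\phi_{\varepsilon}$ lies in $A$, so every $\partial_i\phi_{\varepsilon}(F)$ vanishes off $\{F\in A\}$. The chain rule and Lemma \ref{lemma2017-2}(a) then give
\begin{align*}
\langle D(\partial_1\phi_{\varepsilon}(F)),\,u_A^2/\gamma_A^{2,2}\rangle_{\mathscr{H}}
=\sum_{i=1}^{2}\partial_i\partial_1\phi_{\varepsilon}(F)\,\gamma_A^{i,2}/\gamma_A^{2,2}
=\partial_1\partial_2\phi_{\varepsilon}(F),
\end{align*}
because $\gamma_A^{1,2}=0$. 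A first integration by parts yields $\mbox{E}[\partial_1\partial_2\phi_{\varepsilon}(F)]=\mbox{E}[\partial_1\phi_{\varepsilon}(F)\,G]$, where $G:=\delta(u_A^2/\gamma_A^{2,2})$. An analogous computation, using $\gamma_A^{1,1}=1$ and $\gamma_A^{2,1}=0$, gives
\begin{align*}
\langle D(\phi_{\varepsilon}(F)),\,G\,u_A^1\rangle_{\mathscr{H}}=G\,\partial_1\phi_{\varepsilon}(F),
\end{align*}
so a second integration by parts produces
\begin{align*}
\mbox{E}[\partial_1\partial_2\phi_{\varepsilon}(F)]=\mbox{E}\!\left[\phi_{\varepsilon}(F)\,\delta\!\left(u_A^1\,\delta(u_A^2/\gamma_A^{2,2})\right)\right].
\end{align*}
Letting $\varepsilon\downarrow0$, the left-hand side converges to $p(z_1,z_2)$ by the continuity of the density asserted in Theorem \ref{the2017-11-25-1}(a), while dominated convergence identifies the limit of the right-hand side with the right-hand side of \eqref{eq2017-2}.

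Part (b) is strictly simpler: take $\psi_{\varepsilon}$ a smooth nondecreasing approximation of $\mathbf{1}_{[z,\infty)}$ with support in $\bar{A}$, apply Lemma \ref{lemma2017-2}(b) to write $\langle D(\psi_{\varepsilon}(M_0)),\,u_{\bar{A}}/\gamma_{\bar{A}}\rangle_{\mathscr{H}}=\psi_{\varepsilon}'(M_0)$, integrate by parts once and let $\varepsilon\downarrow0$ to obtain \eqref{eq2018-01-02-15}.

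The main technical point is to verify that the two Skorohod integrands involved in (a), namely $u_A^2/\gamma_A^{2,2}$ and $u_A^1\,\delta(u_A^2/\gamma_A^{2,2})$, belong to $\mathrm{Dom}\,\delta$, which legitimates both integrations by parts and the subsequent limit. This follows by combining Lemmas \ref{lemma2017-1} and \ref{lemma2017-05-12-1}: the numerators lie in $\mathbb{D}^{\infty}(\mathscr{H})$ while $(\gamma_A^{2,2})^{-1}\in\bigcap_{p\geq1}L^p(\Omega)$ with all Malliavin norms finite, so $u_A^2/\gamma_A^{2,2}\in\mathbb{D}^{\infty}(\mathscr{H})\subset\mathrm{Dom}\,\delta$; the continuity of $\delta$ on the Watanabe spaces (cf.\ \cite[Proposition 1.5.7]{Nua06}) then places $\delta(u_A^2/\gamma_A^{2,2})$ in $\mathbb{D}^{\infty}$, and multiplying by $u_A^1\in\mathbb{D}^{\infty}(\mathscr{H})$ keeps the product in $\mathbb{D}^{\infty}(\mathscr{H})\subset\mathrm{Dom}\,\delta$. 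The analogous bookkeeping handles $u_{\bar{A}}/\gamma_{\bar{A}}$ for part (b).
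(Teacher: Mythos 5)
Your proof is correct and uses the same essential argument as the paper: two integrations by parts driven by $u_A^2/\gamma_A^{2,2}$ and then $u_A^1$, the local identities of Lemma \ref{lemma2017-2}, the diagonality of $\gamma_A$, and a verification that the two Skorohod integrands lie in $\mathrm{Dom}\,\delta$. The only real difference is bookkeeping: the paper tests against $g\in C_0^\infty$ with a separate localization factor $G=\kappa_{z_2}(F_2)$, recovers the density for a range of $(y_1,y_2)$ via Fubini, and removes $G$ at the end by the local property of $\delta$; you instead approximate $1_{\{x_1>z_1,\,x_2>z_2\}}$ directly by smooth $\phi_\varepsilon$ supported in $A$, so the localization is automatic, and you pass to the limit $\varepsilon\downarrow0$ using the continuity of $p$ from Theorem \ref{the2017-11-25-1}(a). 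Both versions are rigorous.
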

\begin{proof}
We first derive the formula \eqref{eq2018-01-02-15}. Let $\tilde{\kappa}_{z} : \mathbb{R} \mapsto [0, 1]$ be an infinitely differentiable function such that $\tilde{\kappa}_{z}(x) = 0$ for all $x \leq \frac{2z}{3}$ and $\tilde{\kappa}_{z}(x) = 1$ for all $x \geq \frac{3z}{4}$. Define $G_0 = \tilde{\kappa}(M_0)$. Consider $\bar{a}$ and $\bar{A}$ as in \eqref{eq2017-09-28-411}. It is clear that on the set $\{M_0 \not\in \bar{A}\}$, we have $G_0 = 0$.

Let $f$ be a function in the space $C_0^{\infty}(\mathbb{R})$ of infinitely differentiable functions with compact support. Set $\varphi(x) = \int_{-\infty}^xf(y)dy$. On $\{M_0 \in \bar{A}\}$, by the chain rule of Malliavin calculus (see \cite[Proposition 1.2.3]{Nua06}) and Lemma \ref{lemma2017-2}(b), we have
\begin{align*}
\langle D\varphi(M_0), u_{\bar{A}}\rangle_{\mathscr{H}} = \varphi'(M_0)\langle DM_0, u_{\bar{A}} \rangle_{\mathscr{H}} = \varphi'(M_0)\gamma_{\bar{A}}.
\end{align*}
Hence,
$
\varphi'(M_0)= \langle D\varphi(M_0), u_{\bar{A}}/\gamma_{\bar{A}} \rangle_{\mathscr{H}}.
$
Since $G_0 = 0$ on the set $\{M_0 \not\in \bar{A}\}$, we obtain
\begin{align*}
G_0\varphi'(M_0)= G_0\langle D\varphi(M_0), u_{\bar{A}}/\gamma_{\bar{A}} \rangle_{\mathscr{H}}.
\end{align*}
Taking expectations on both sides of the above equation and using the duality relationship between the derivative and the divergence operators we get
\begin{align} \label{eq2017-3-02-26}
\mbox{E}[G_0\varphi'(M_0)] = \mbox{E}[\varphi(M_0) \delta(G_0u_{\bar{A}}/\gamma_{\bar{A}})].
\end{align}
Using the fact that
\begin{align} \label{eq2017-11-27-600}
\varphi(M_0) = \int_{-\infty}^{M_0}\varphi'(y)dy
\end{align}
and Fubini's theorem, we obtain that
 \begin{align}
\mbox{E}[G_0\varphi'(M_0)] = \int_{\mathbb{R}}\varphi'(y)\mbox{E}[1_{\{M_0 > y\}}\delta(G_0u_{\bar{A}}/\gamma_{\bar{A}})]dy,
\end{align}
and equivalently,
\begin{align}
\mbox{E}[G_0f(M_0)] = \int_{\mathbb{R}}f(y)\mbox{E}[1_{\{M_0 > y\}}\delta(G_0u_{\bar{A}}/\gamma_{\bar{A}})]dy.
\end{align}
Since $G_0 = 1$ on the set $\{M_0 \geq  \frac{3z}{4}\}$, this implies that for any $y \in  \, ]\frac{3z}{4}, \infty[$, the density function of $M_0$ at $y$ is given by
$
p_0(y) = \mbox{E}[1_{\{M_0 > y\}}\delta(G_0u_{\bar{A}}/\gamma_{\bar{A}})].
$
In particular,
\begin{align*}
p_0(z) = \mbox{E}[1_{\{M_0 > z\}}\delta(G_0u_{\bar{A}}/\gamma_{\bar{A}})].
\end{align*}
Since $G_0 = 1$ on the set $\{M_0 > z\}$, by the local property of $\delta$ (see \cite[Proposition 1.3.15]{Nua06}), we obtain formula \eqref{eq2018-01-02-15}.

We now derive the formula \eqref{eq2017-2}.
Let $\kappa_{z_2} : \mathbb{R} \mapsto [0, 1]$ be an infinitely differentiable function such that $\kappa_{z_2}(x) = 0$ for all $x \leq \frac{2z_2}{3}$ and $\kappa_{z_2}(x) = 1$ for all $x \geq \frac{3z_2}{4}$. Define $\bar{\kappa}(y_1, y_2) = \kappa_{z_2}(y_2)$ and $G = \bar{\kappa}(F)$. Consider $a$ and $A$ as in \eqref{eq2017-09-28-4}. It is clear that on the set $\{F \not\in A\}$, we have $G = 0$.

Let $g$ be a function in the space $C_0^{\infty}(\mathbb{R}^2)$ of infinitely differentiable functions with compact support. Set
\begin{align}
\varphi(x_1, x_2) = \int_{-\infty}^{x_1}\int_{-\infty}^{x_2}g(y_1, y_2)dy_1dy_2.
\end{align}
On $\{F \in A\}$, by the chain rule of Malliavin calculus (see \cite[Proposition 1.2.3]{Nua06}) and Lemma \ref{lemma2017-2}(a), we have
\begin{align*}
\langle D\partial_1\varphi(F), u_A^j\rangle_{\mathscr{H}} = \sum_{i = 1}^2 \partial_{1i}\varphi(F)\langle DF_i, u_A^j \rangle_{\mathscr{H}} = \sum_{i = 1}^2 \partial_{1i}\varphi(F)\gamma_A^{i,j},
\end{align*}
where the notation $\partial_{1i}$ means we take the partial derivative with respect to the first variable and then take the partial derivative with respect to the $i$th variable.
Consequently,
$
\partial_{12}\varphi(F) = \sum_{k = 1}^2\langle D\partial_1\varphi(F), u_A^k\rangle_{\mathscr{H}}(\gamma_A^{-1})^{k, 2}.
$
Since $G = 0$ on the set $\{F \not\in A\}$, we obtain
\begin{align*}
G\partial_{12}\varphi(F) = \sum_{k = 1}^2G\langle D\partial_1\varphi(F), u_A^k\rangle_{\mathscr{H}}(\gamma_A^{-1})^{k, 2}.
\end{align*}
Taking expectations on both sides of the above equation and using the duality relationship between the derivative and the divergence operators we get
\begin{align} \label{eq2017-3}
\mbox{E}[G\partial_{12}\varphi(F)] = \mbox{E}[\partial_1\varphi(F) \delta(\sum_{k = 1}^2Gu_A^k(\gamma_A^{-1})^{k, 2})].
\end{align}
We denote
$
\bar{G} = \delta(\sum_{k = 1}^2Gu_A^k(\gamma_A^{-1})^{k, 2}).
$
Since  $G = 0$ on the set $\{F \not\in A\}$, the local property of $\delta$ (see \cite[Proposition 1.3.15]{Nua06}) implies that $\bar{G} = 0$ on the set $\{F \not\in A\}$. On the other hand, on $\{F \in A\}$,  by the chain rule and Lemma \ref{lemma2017-2},
\begin{align*}
\langle D\varphi(F), u_A^j\rangle_{\mathscr{H}} = \sum_{i = 1}^2 \partial_{i}\varphi(F)\langle DF_i, u_A^j \rangle_{\mathscr{H}} = \sum_{i = 1}^2 \partial_{i}\varphi(F)\gamma_A^{i,j},
\end{align*}
which implies that on $\{F \in A\}$,
$
\partial_{1}\varphi(F) = \sum_{n = 1}^2\langle D\varphi(F), u_A^n\rangle_{\mathscr{H}}(\gamma_A^{-1})^{n, 1}.
$
Multiplying both sides of the above equality by $\bar{G}$, we obtain
\begin{align}\label{eq2017-4}
\bar{G}\partial_{1}\varphi(F) = \sum_{n = 1}^2\bar{G}\langle D\varphi(F), u_A^n\rangle_{\mathscr{H}}(\gamma_A^{-1})^{n, 1}.
\end{align}
We substitute (\ref{eq2017-4}) into (\ref{eq2017-3}) and we obtain
\begin{align}
\mbox{E}[G\partial_{12}\varphi(F)] &= \mbox{E}\left[\sum_{n = 1}^2\bar{G}\langle D\varphi(F), u_A^n\rangle_{\mathscr{H}}(\gamma_A^{-1})^{n, 1}\right] \nonumber \\
&= \mbox{E}\left[\varphi(F)\delta\left(\sum_{n = 1}^2\bar{G}u_A^n(\gamma_A^{-1})^{n, 1}\right)\right] \nonumber\\
&= \mbox{E}\left[\varphi(F)\delta\left(\sum_{n = 1}^2\delta\left(\sum_{k = 1}^2Gu_A^k(\gamma_A^{-1})^{k, 2}\right)u_A^n(\gamma_A^{-1})^{n, 1}\right)\right]. \nonumber
\end{align}
Since $(\gamma_A^{-1})^{1, 1}=1$ and $(\gamma_A^{-1})^{1, 2} = (\gamma_A^{-1})^{2, 1} = 0$ by \eqref{eq2017-11-27-3}, this is equal to
\begin{align}
 \mbox{E}\left[\varphi(F)\delta(\delta(Gu_A^2(\gamma_A^{-1})^{2, 2})u_A^1(\gamma_A^{-1})^{1, 1})\right] = \mbox{E}[\varphi(F)\delta(\delta(Gu_A^2/\gamma_A^{2, 2})u_A^1)]. \nonumber
 \end{align}
Using the fact that
\begin{align} \label{eq2017-11-27-6}
\varphi(F) = \int_{-\infty}^{F_1}\int_{-\infty}^{F_2}\partial_{12}\varphi(y_1, y_2)dy_1dy_2
\end{align}
and Fubini's theorem, we obtain that
 \begin{align}
\mbox{E}[G\partial_{12}\varphi(F)] = \int_{\mathbb{R}^2}\partial_{12}\varphi(y_1, y_2)\mbox{E}[1_{\{F_1 > y_1, F_2 > y_2\}}\delta(\delta(Gu_A^2/\gamma_A^{2, 2})u_A^1)]dy_1dy_2,
\end{align}
and equivalently,
 \begin{align}
\mbox{E}[Gg(F)] = \int_{\mathbb{R}^2}g(y_1, y_2)\mbox{E}[1_{\{F_1 > y_1, F_2 > y_2\}}\delta(\delta(Gu_A^2/\gamma_A^{2, 2})u_A^1)]dy_1dy_2.
\end{align}

Since $G = 1$ on the set $\{F \in \mathbb{R} \times [\frac{3z_2}{4}, \infty[\}$, this implies that for any $(y_1, y_2) \in \mathbb{R} \times ]\frac{3z_2}{4}, \infty[$, the density function of $F$ at $(y_1, y_2)$ is given by
\begin{align*}
p(y_1, y_2) = \mbox{E}[1_{\{F_1 > y_1, F_2 > y_2\}}\delta(\delta(Gu_A^2/\gamma_A^{2, 2})u_A^1)].
\end{align*}
In particular,
\begin{align*}
p(z_1, z_2) = \mbox{E}[1_{\{F_1 > z_1, F_2 > z_2\}}\delta(\delta(Gu_A^2/\gamma_A^{2, 2})u_A^1)].
\end{align*}
Since $G = 1$ on the set $\{F_2 > z_2\}$, by the local property of $\delta$ (see \cite[Proposition 1.3.15]{Nua06}), we obtain
formula \eqref{eq2017-2}
\end{proof}

\begin{remark}\label{remark2017-10-11-1}
In the proof of Proposition \ref{prop2017-10-02-1}, if we use the fact that
\begin{align*}
\varphi(F) = -\int_{F_1}^{+ \infty}\int_{- \infty}^{F_2}\partial_{12}\varphi(y_1, y_2)dy_2dy_1
\end{align*}
instead of \eqref{eq2017-11-27-6}, we obtain another formula for the joint density:
\begin{align}\label{eq2017-09-07-10}
p(z_1, z_2) = -\mbox{E}[1_{\{F_1 < z_1, F_2 > z_2\}}\delta(\delta(u_A^2/\gamma_A^{2, 2})u_A^1)].
\end{align}
\end{remark}

\section{Gaussian-type upper bound on the density of $F$} \label{section4.4}

In this section, we fix $I \times J \subset \, ]0, T] \times ]0, 1[$ and assume that $\delta_1$ satisfies the conditions in \eqref{eq2017-09-26}.
We derive an estimate on the density of $F$ from the formula obtained in  Proposition \ref{prop2017-10-02-1}. This estimate will establish Theorem \ref{theorem100}.

First, from \eqref{eq2017-2}  and applying H\"{o}lder's inequality, for $z_1 \geq 0$,
\begin{align}\label{eq2017-09-07-11}
p(z_1, z_2) \leq  \mbox{P}\{F_1 > z_1\}^{1/4}\, \mbox{P}\{F_2 > z_2\}^{1/4}\, \|\delta(\delta(u_A^2/\gamma_A^{2, 2})u_A^1)\|_{L^2(\Omega)}.
\end{align}
On the other hand, if $z_1 < 0$, applying H\"{o}lder's inequality to (\ref{eq2017-09-07-10}), we obtain
\begin{align}\label{eq2017-09-07-12}
p(z_1, z_2) \leq  \mbox{P}\{F_1 < z_1\}^{1/4}\, \mbox{P}\{F_2 > z_2\}^{1/4}\, \|\delta(\delta(u_A^2/\gamma_A^{2, 2})u_A^1)\|_{L^2(\Omega)}.
\end{align}
Combining (\ref{eq2017-09-07-11}) and (\ref{eq2017-09-07-12}), we obtain that, for all $(z_1, z_2) \in \mathbb{R} \times ]0, \infty[$,
\begin{align}\label{eq2017-09-07-13}
p(z_1, z_2) \leq  \mbox{P}\{|F_1| > |z_1|\}^{1/4}\, \mbox{P}\{F_2 > z_2\}^{1/4}\, \|\delta(\delta(u_A^2/\gamma_A^{2, 2})u_A^1)\|_{L^2(\Omega)}.
\end{align}

In what follows, we use the properties of the Skorohod integral $\delta$ to express $\delta(\delta(u_A^2/\gamma_A^{2, 2})u_A^1)$.

\begin{lemma}\label{lemma2017-12-17-1}
 \begin{align}\label{eq2017-08-28-10}
\delta(\delta(u_A^2/\gamma_A^{2, 2})u_A^1)  &= T_1 + T_2 - T_3 + T_4 - T_5 + T_6,
\end{align}
where
\begin{align}
T_1 & = (\gamma_A^{2, 2})^{-1}\delta(u_A^2)\delta(u_A^1), \, \, T_2 =(\gamma_A^{2, 2})^{-2} \langle D\gamma_A^{2, 2}, u_A^2 \rangle_{\mathscr{H}}\delta(u_A^1),  \,\, T_3  = (\gamma_A^{2, 2})^{-1}\langle D\delta(u_A^2), u_A^1 \rangle_{\mathscr{H}}, \nonumber \\
 T_4 & = (\gamma_A^{2, 2})^{-2}\delta(u_A^2) \langle D\gamma_A^{2, 2}, u_A^1 \rangle_{\mathscr{H}}, \quad
T_5 = 2(\gamma_A^{2,2})^{-3}\langle D\gamma_A^{2, 2}, u_A^2 \rangle_{\mathscr{H}}\langle D\gamma_A^{2, 2}, u_A^1 \rangle_{\mathscr{H}}, \nonumber \\
T_6 & = (\gamma_A^{2, 2})^{-2}\langle D\langle D\gamma_A^{2, 2}, u_A^2 \rangle_{\mathscr{H}}, u_A^1 \rangle_{\mathscr{H}}. \nonumber
\end{align}
 \end{lemma}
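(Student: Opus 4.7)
The plan is to apply the commutation formula
\[
\delta(Fu) = F\,\delta(u) - \langle DF, u\rangle_{\mathscr{H}}
\]
(Nualart, Proposition 1.3.3) twice, first to the inner Skorohod integral $X:=\delta(u_A^2/\gamma_A^{2,2})$ and then to the outer one $\delta(X u_A^1)$, and to expand the resulting Malliavin derivatives using the product and chain rules. All the work is algebraic bookkeeping; the nontrivial analytic content (smoothness and integrability) has already been secured.

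The steps, in order, are as follows. First, I check that $u_A^2/\gamma_A^{2,2}\in\mathbb{D}^{\infty}(\mathscr{H})\subset\operatorname{Dom}\delta$: by Lemma \ref{lemma2017-1}, $u_A^2\in\mathbb{D}^{\infty}(\mathscr{H})$ and $\gamma_A^{2,2}\in\mathbb{D}^{\infty}$, while Lemma \ref{lemma2017-05-12-1}(a) gives negative moments of all orders for $\gamma_A^{2,2}$, so $(\gamma_A^{2,2})^{-1}\in\mathbb{D}^{\infty}$ (see \cite[Lemma 2.1.6]{Nua06}). Second, using $D(\gamma_A^{2,2})^{-1}=-(\gamma_A^{2,2})^{-2}D\gamma_A^{2,2}$, the commutation formula yields
\[
X \;=\; (\gamma_A^{2,2})^{-1}\delta(u_A^2) \;+\; (\gamma_A^{2,2})^{-2}\langle D\gamma_A^{2,2}, u_A^2\rangle_{\mathscr{H}}.
\]
Third, by the same regularity argument $X\in\mathbb{D}^{\infty}$ and $X u_A^1\in\operatorname{Dom}\delta$, so a second application of the commutation formula gives
\[
\delta(Xu_A^1) \;=\; X\,\delta(u_A^1) \;-\; \langle DX, u_A^1\rangle_{\mathscr{H}}.
\]
The product $X\delta(u_A^1)$ immediately produces $T_1+T_2$. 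Fourth, I compute $DX$ by differentiating the two summands of $X$ via the product rule, using $D(\gamma_A^{2,2})^{-1}=-(\gamma_A^{2,2})^{-2}D\gamma_A^{2,2}$ and $D(\gamma_A^{2,2})^{-2}=-2(\gamma_A^{2,2})^{-3}D\gamma_A^{2,2}$. This produces four summands in $DX$; pairing each with $u_A^1$ via the $\mathscr{H}$-inner product yields four scalar random variables that match $T_3,T_4,T_5,T_6$ up to explicit signs. Collecting the contributions from $X\delta(u_A^1)$ and $-\langle DX, u_A^1\rangle_{\mathscr{H}}$ and grouping powers of $\gamma_A^{2,2}$ produces the claimed identity \eqref{eq2017-08-28-10}.

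The only real obstacle is careful bookkeeping of the six signs and normalizations; there is no further analytic subtlety, since closability of $\delta$, $D$, and membership in $\operatorname{Dom}\delta$ have all been arranged by the preceding smoothness results (Lemmas \ref{lemma2017-1} and \ref{lemma2017-05-12-1}(a)) and since the commutation rule and the product rule used here are standard for smooth random variables and smooth $\mathscr{H}$-valued random variables.
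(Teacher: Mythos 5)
Your proposal is correct and follows essentially the same route as the paper: two applications of the commutation formula $\delta(Fu)=F\,\delta(u)-\langle DF,u\rangle_{\mathscr{H}}$ (Nualart (1.48)), first to $\delta(u_A^2/\gamma_A^{2,2})$ and then to the outer integral, followed by the product rule for $D$. One caveat on the ``bookkeeping of signs'' you defer: a careful expansion of $-\langle DX,u_A^1\rangle_{\mathscr{H}}$ actually yields $T_1+T_2-T_3+T_4+T_5-T_6$, i.e.\ the signs of $T_5$ and $T_6$ opposite to those displayed in the statement (the paper's own derivation contains a compensating sign slip when differentiating $\delta(u_A^2/\gamma_A^{2,2})$); this is immaterial for what follows, since $T_4=T_5=T_6=0$ by Proposition \ref{prop2017-11-27-1}(b).
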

 \begin{proof}
 First, by \cite[(1.48)]{Nua06},
\begin{align}\label{eq2017-08-28-5}
\delta(\delta(u_A^2/\gamma_A^{2, 2})u_A^1) &= \delta(u_A^2/\gamma_A^{2, 2})\delta(u_A^1) - \langle D\delta(u_A^2/\gamma_A^{2, 2}), u_A^1 \rangle_{\mathscr{H}}.
\end{align}
We use \cite[(1.48)]{Nua06} again to write
\begin{align}\label{eq2017-08-28-7}
\delta(u_A^2/\gamma_A^{2, 2}) &= \delta(u_A^2)/\gamma_A^{2,2} + \langle D\gamma_A^{2, 2}, u_A^2 \rangle_{\mathscr{H}}/(\gamma_A^{2, 2})^2.
\end{align}
Hence the first term on the right-hand side of (\ref{eq2017-08-28-5}) is equal to
\begin{align}\label{eq2017-08-28-6}
\delta(u_A^2/\gamma_A^{2, 2})\delta(u_A^1) &= \frac{\delta(u_A^2)}{\gamma_A^{2, 2}}\delta(u_A^1) + \frac{\langle D\gamma_A^{2, 2}, u_A^2 \rangle_{\mathscr{H}}}{(\gamma_A^{2,2})^2}\delta(u_A^1).
\end{align}
For the second term on the right-hand side of (\ref{eq2017-08-28-5}), we apply \eqref{eq2017-08-28-7} to obtain that
\begin{align}\label{eq2017-08-28-8}
D\delta(u_A^2/\gamma_A^{2, 2}) &= D(\delta(u_A^2)/\gamma_A^{2,2}) - D(\langle D\gamma_A^{2, 2}, u_A^2 \rangle_{\mathscr{H}}/(\gamma_A^{2, 2})^2)\nonumber \\
&= \frac{D\delta(u_A^2)}{\gamma_A^{2,2}} - \frac{\delta(u_A^2)D\gamma_A^{2,2}}{(\gamma_A^{2,2})^2} - \frac{D\langle D\gamma_A^{2, 2}, u_A^2 \rangle_{\mathscr{H}}}{(\gamma_A^{2, 2})^2} + \frac{2\langle D\gamma_A^{2, 2}, u_A^2 \rangle_{\mathscr{H}}D\gamma_A^{2, 2}}{(\gamma_A^{2, 2})^3}.
\end{align}
Therefore the second term on the right-hand side of (\ref{eq2017-08-28-5}) can be written as
\begin{align}\label{eq2017-08-28-9}
- \langle D\delta(u_A^2/\gamma_A^{2, 2}), u_A^1 \rangle_{\mathscr{H}} &= - \frac{1}{\gamma_A^{2, 2}}\langle D\delta(u_A^2), u_A^1 \rangle_{\mathscr{H}} - \frac{2\langle D\gamma_A^{2, 2}, u_A^2 \rangle_{\mathscr{H}}}{(\gamma_A^{2,2})^3}\langle D\gamma_A^{2, 2}, u_A^1 \rangle_{\mathscr{H}} \nonumber\\
  &  \quad + \frac{\delta(u_A^2)}{(\gamma_A^{2, 2})^2} \langle D\gamma_A^{2, 2}, u_A^1 \rangle_{\mathscr{H}}  + \frac{1}{(\gamma_A^{2, 2})^2}\langle D\langle D\gamma_A^{2, 2}, u_A^2 \rangle_{\mathscr{H}}, u_A^1 \rangle_{\mathscr{H}}.
\end{align}
Putting (\ref{eq2017-08-28-6}) and (\ref{eq2017-08-28-9}) together, we obtain \eqref{eq2017-08-28-10}.
\end{proof}

\begin{prop}\label{prop2017-11-27-1}
\begin{itemize}
  \item [(a)]For any $p \geq 2$, there exists $c_p > 0$, not depending on $(s_0, y_0) \in I \times J$, such that for all small $\delta_1 > 0$,  and for all $z_2 \geq \delta_1^{1/4}$,
      \begin{align} \label{eq2017-12-19-4}
       \|T_i\|_{L^p(\Omega)} \leq c_p\, \delta_1^{-1/4}, \quad \mbox{for} \,\, \, i \in \{1, 2, 3\}.
       \end{align}
  \item [(b)] $T_4, T_5$ and $T_6$ vanish.
  \end{itemize}
\end{prop}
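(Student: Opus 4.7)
The backbone of both parts is the orthogonality relation
\[
\langle D(u(t,y_0) - u(s,y_0)), u_A^1\rangle_{\mathscr{H}} = 0, \qquad s,t\in[s_0, s_0+\delta_1].
\]
Indeed, by Lemma~\ref{lemma2017-11-27-1} applied with $f(r)=f_0(r)$ and $g(v)=g_0(v)$, together with $f_0\equiv 1$ on $I\supset[s_0,s_0+\delta_1]$ and $g_0(y_0)=1$, we have $\langle Du(t,y_0), u_A^1\rangle_{\mathscr{H}} = f_0(t)g_0(y_0) = 1$ for every $t$ in this interval, so the increments cancel. Let $V_0\subset\mathscr{H}$ denote the closed linear span of these increment derivatives; then $u_A^1\perp V_0$. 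Since $\gamma_A^{2,2}$ and each scalar $u_A^2(r,v)$ are smooth functionals of $\{u(t,y_0)-u(s_0,y_0):t\in[s_0,s_0+\delta_1]\}$, their iterated Malliavin derivatives take values (in each tensor factor) in $V_0^{\otimes n}$. This immediately gives $\langle D\gamma_A^{2,2}, u_A^1\rangle_{\mathscr{H}} = 0$, hence $T_4=T_5=0$, and expanding $T_6$ by the product rule yields two pieces, $\langle D^2\gamma_A^{2,2}, u_A^1\otimes u_A^2\rangle_{\mathscr{H}^{\otimes 2}}$ and $\int\!dr\,dv\, D_{r,v}\gamma_A^{2,2}\,\langle D(u_A^2(r,v)), u_A^1\rangle_{\mathscr{H}}$, which both vanish for the same reason. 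This proves (b).

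For part (a), I apply H\"older's inequality to each $T_i$ and use three inputs. First, Lemma~\ref{lemma2017-05-12-1}(a) gives $\|(\gamma_A^{2,2})^{-1}\|_{L^p(\Omega)}\leq c_p\delta_1^{-1}$ under $z_2\geq\delta_1^{1/4}$. Second, since $u_A^1$ is deterministic with $\|u_A^1\|_{\mathscr{H}}\leq c$ by \eqref{eq2017-12-26-4}, $\delta(u_A^1)=W(u_A^1)$ is Gaussian with all moments bounded. Third, the pointwise estimate $\|u_A^2\|_{\mathscr{H}}\leq c\,\delta_1^{3/4}$: decomposing $u_A^2=u_A^{21}-u_A^{22}$ as in \eqref{eq2017-11-26-8}, using $|\psi(Y_r)|\leq 1$, $|\int_{s_0}^r\psi(Y_a)\,da|\leq\delta_1$, $\|\phi_{\delta_1}\|_{L^2}^2\leq c\delta_1^{1/2}$ (from \eqref{eq2017-11-26-3}), and $\|\phi''_{\delta_1}\|_{L^2}^2\leq c\delta_1^{-3/2}$ (from \eqref{eq2017-11-26-4}), both $\|u_A^{21}\|_{\mathscr{H}}^2$ and $\|u_A^{22}\|_{\mathscr{H}}^2$ are bounded by $c\delta_1^{3/2}$ pointwise in $\omega$. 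The crucial observation is that $u_A^2$ is adapted to $(\mathscr{F}_r)$ because $Y_r$ is $\mathscr{F}_r$-measurable, so $\delta(u_A^2)$ coincides with the Walsh integral and Burkholder's inequality yields $\|\delta(u_A^2)\|_{L^p(\Omega)}\leq c_p\delta_1^{3/4}$ for $p\geq 2$. Combining these three bounds gives $\|T_1\|_{L^p(\Omega)}\leq c_p\delta_1^{-1/4}$.

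For $T_2$, Cauchy-Schwarz combined with Lemma~\ref{lemma2017-10-2-1}(a) and $\|\psi'\|_\infty\leq cR^{-1}\leq c\delta_1^{(\gamma_0-p_0-4)/2}$ (using the choice of $R$ in Lemma~\ref{lemma11}(a) and $z_2\geq\delta_1^{1/4}$) gives $\|D\gamma_A^{2,2}\|_{L^p(\Omega;\mathscr{H})}\leq c_p\delta_1$, so $\|\langle D\gamma_A^{2,2}, u_A^2\rangle_{\mathscr{H}}\|_{L^p(\Omega)}\leq c_p\delta_1^{7/4}$; multiplying by $\|(\gamma_A^{2,2})^{-2}\|_{L^p}\leq c_p\delta_1^{-2}$ and $\|\delta(u_A^1)\|_{L^p}\leq c_p$ yields the required $c_p\delta_1^{-1/4}$. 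For $T_3$, the key step is the commutation formula $D_{r',v'}\delta(u_A^2)=u_A^2(r',v')+\delta(D_{r',v'}u_A^2)$ combined with Fubini for the Skorohod integral, giving
\[
\langle D\delta(u_A^2), u_A^1\rangle_{\mathscr{H}} = \langle u_A^2, u_A^1\rangle_{\mathscr{H}} + \delta\bigl((s,y)\mapsto \langle D(u_A^2(s,y)), u_A^1\rangle_{\mathscr{H}}\bigr).
\]
The second term vanishes identically by the orthogonality of part (b) applied to the scalar functional $u_A^2(s,y)$; the first term is bounded by Cauchy-Schwarz as $|\langle u_A^2, u_A^1\rangle_{\mathscr{H}}|\leq c\delta_1^{3/4}$, and H\"older combined with Lemma~\ref{lemma2017-05-12-1}(a) closes the estimate. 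The main technical obstacle is verifying the integrability hypotheses required for the commutation identity and the Skorohod Fubini theorem, which follow from $u_A^2\in\mathbb{D}^\infty(\mathscr{H})$ (Lemma~\ref{lemma2017-1}) together with the moment bounds of Lemma~\ref{lemma2017-10-2-1}.
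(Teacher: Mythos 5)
Your proposal is correct and follows essentially the same route as the paper: adaptedness of $u_A^2$ plus Burkholder's inequality for $\|\delta(u_A^2)\|_{L^p(\Omega)}\leq c_p\,\delta_1^{3/4}$, the pointwise bound $\|u_A^2\|_{\mathscr{H}}\leq c\,\delta_1^{3/4}$, the negative-moment estimate of Lemma \ref{lemma2017-05-12-1}(a), and the orthogonality $\langle D(u(t,y_0)-u(s,y_0)),u_A^1\rangle_{\mathscr{H}}=0$ coming from Lemma \ref{lemma2017-11-27-1} for part (b). The one genuine (and welcome) simplification is in $T_3$: you observe that the remainder $\delta\bigl(\langle D(u_A^2(\cdot,\ast)),u_A^1\rangle_{\mathscr{H}}\bigr)$ vanishes identically by that same orthogonality, whereas the paper estimates the corresponding Walsh integrals $\bar T_{32}$ and $\bar T_{33}$ by Burkholder's inequality to the (weaker but still sufficient) bound $c_p\,\delta_1^{3/4}$ without exploiting that they are in fact zero.
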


An immediate consequence of Lemma \ref{lemma2017-12-17-1} and Proposition \ref{prop2017-11-27-1} is the following.
\begin{prop}\label{prop0511-1}
There exists a finite positive constant $c$, not depending on $(s_0, y_0) \in I \times J$, such that for all small $\delta_1 > 0$ and for all $z_2 \geq \delta_1^{1/4}$,
\begin{align} \label{eq2017-05-11-10}
\|\delta(\delta(u_A^2/\gamma_A^{2, 2})u_A^1)\|_{L^2(\Omega)} \leq c \, \delta_1^{-1/4}.
\end{align}
\end{prop}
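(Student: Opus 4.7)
The proof is essentially immediate from the two inputs already in hand. The plan is to invoke the expansion
\[
\delta\bigl(\delta(u_A^2/\gamma_A^{2,2})u_A^1\bigr) = T_1 + T_2 - T_3 + T_4 - T_5 + T_6
\]
provided by Lemma \ref{lemma2017-12-17-1}, then bound each summand separately in $L^2(\Omega)$ and combine via the triangle inequality. The non-trivial analytical work (estimating the individual $T_i$'s by differentiating through the cutoff $\psi$, using Lemmas \ref{lemma2017-10-2-1} and \ref{lemma2017-05-12-1}(a) to control the negative moments of $\gamma_A^{2,2}$ and the Sobolev norms of $u_A^1, u_A^2$, and Meyer's inequalities to pass from $D$-norms to $\delta$-norms) is carried out in Proposition \ref{prop2017-11-27-1}, which I am allowed to invoke as a black box.

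More precisely, the first step is to apply Proposition \ref{prop2017-11-27-1}(b), which asserts that the three terms $T_4, T_5, T_6$ vanish identically (this reflects the block-diagonal structure of $\gamma_A$ and the fact that $\langle DF_1, u_A^2\rangle_\mathscr{H} = 0$, so $\gamma_A^{2,1}=0$, and the deterministic nature of $u_A^1$ forces certain inner products to be zero). Thus
\[
\bigl\|\delta\bigl(\delta(u_A^2/\gamma_A^{2,2})u_A^1\bigr)\bigr\|_{L^2(\Omega)} \le \|T_1\|_{L^2(\Omega)} + \|T_2\|_{L^2(\Omega)} + \|T_3\|_{L^2(\Omega)}.
\]

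The second step is to apply Proposition \ref{prop2017-11-27-1}(a) with $p=2$: each of $\|T_1\|_{L^2(\Omega)}$, $\|T_2\|_{L^2(\Omega)}$, $\|T_3\|_{L^2(\Omega)}$ is bounded above by $c_2\, \delta_1^{-1/4}$, uniformly in $(s_0, y_0) \in I\times J$ and for all sufficiently small $\delta_1 > 0$ with $z_2 \geq \delta_1^{1/4}$. Summing the three contributions and absorbing the numerical constant into $c$ yields the desired bound \eqref{eq2017-05-11-10}. There is no real obstacle here: the substantive estimates have been isolated into Proposition \ref{prop2017-11-27-1}, so Proposition \ref{prop0511-1} is just a one-line consequence of the decomposition plus the triangle inequality in $L^2(\Omega)$.
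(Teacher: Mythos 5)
Your proposal is correct and is exactly the paper's argument: the paper states Proposition \ref{prop0511-1} as ``an immediate consequence of Lemma \ref{lemma2017-12-17-1} and Proposition \ref{prop2017-11-27-1}'', i.e.\ the decomposition into $T_1+T_2-T_3+T_4-T_5+T_6$, the vanishing of $T_4,T_5,T_6$, the bound $\|T_i\|_{L^2(\Omega)}\le c\,\delta_1^{-1/4}$ for $i\in\{1,2,3\}$, and the triangle inequality. (Your parenthetical gloss mentioning Meyer's inequalities is not how Proposition \ref{prop2017-11-27-1} is actually proved --- the paper uses adaptedness and Burkholder via Proposition \ref{lemma2017-05-04-2} --- but since you invoke that proposition as a black box this does not affect the argument.)
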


The proof of Proposition \ref{prop2017-11-27-1} is divided into the following two subsections.

\subsection{Proof of Proposition \ref{prop2017-11-27-1}(a)} \label{section4.5.1}

Throughout  Section \ref{section4.5.1}, we assume that
\begin{align}\label{eq2017-05-04-6}
z_2 \geq \delta_1^{1/4}.
\end{align}
Recalling the definition of $R$ in \eqref{eq2017-10-12-5}, under the assumption \eqref{eq2017-05-04-6}, we see from \eqref{eq2017-09-28-4} that
\begin{align}\label{eq2017-12-1-3}
R^{-1} &= c^{-1}a^{-2p_0}\delta_1^{(\gamma_0 - 4)/2} = c'z_2^{-2p_0}\delta_1^{(\gamma_0 - 4)/2}  \leq c'\delta_1^{(\gamma_0 - p_0 - 4)/2}.
\end{align}
We will make use of this in the estimates below.

We first give an estimate for the moments of $T_1$. In order to estimate the moments of the Skorohod integral $\delta(u_A^2)$, recall the extension of Proposition 1.3.11 of \cite{Nua06} to multiparameter adapted processes, mentioned in \cite[p.45]{Nua06}.

We denote by $L^2_a$ the closed subspace of $L^2(\Omega \times [0, T] \times [0, 1])$ formed by those processes which are adapted to the filtration $(\mathscr{F}_t)_{t\geq 0}$ defined in \eqref{eq2018-07-31-1}.

\begin{prop} \label{lemma2017-05-04-2}
$L^2_a \subset \mbox{Dom} \ \delta$ and the operator $\delta$ restricted to $L^2_a$ coincides with the Walsh integral, that is, for $u \in L^2_a$,
\begin{align}\label{eq2017-05-05-1}
\delta(u) = \int_0^T\int_0^1 u(r, v)W(dr, dv).
\end{align}
\end{prop}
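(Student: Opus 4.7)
The strategy is the standard density argument: verify the identity on an appropriate class of simple adapted processes (for which the Skorohod integral can be computed explicitly via the duality rule $\delta(Fh) = F\delta(h) - \langle DF,h\rangle_{\mathscr{H}}$), and then pass to the $L^2_a$-limit using the closability of $\delta$ together with Walsh's isometry. Throughout, let $\mathscr{E}_a$ denote the class of processes of the form
\begin{equation*}
u(r,v) = \sum_{j=1}^N F_j \, \mathbf{1}_{]t_j,t_{j+1}]}(r)\,\mathbf{1}_{B_j}(v),
\end{equation*}
where $0\le t_1<\cdots<t_{N+1}\le T$, $B_j\subset[0,1]$ is Borel, and $F_j$ is an $\mathscr{F}_{t_j}$-measurable smooth random variable in $\mathscr{S}$.

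First, I would show that $\mathscr{E}_a\subset\text{Dom}\ \delta$ and that $\delta(u)$ coincides with the Walsh integral for $u\in\mathscr{E}_a$. Fix a single summand $u = F\,\mathbf{1}_{]s,t]\times B}$ with $F\in\mathscr{S}\cap L^2(\Omega,\mathscr{F}_s)$. Since $\mathbf{1}_{]s,t]\times B}\in\mathscr{H}$, the formula
\begin{equation*}
\delta(F\,\mathbf{1}_{]s,t]\times B}) = F\,W(]s,t]\times B) - \int_s^t\!\!\int_B D_{r,v}F\,dr\,dv
\end{equation*}
holds (see \cite[(1.48)]{Nua06}). The key adaptedness observation is that $F\in\mathscr{F}_s$-measurable implies $D_{r,v}F=0$ for $r>s$, so the correction term vanishes and $\delta(u)$ equals $F\,W(]s,t]\times B)$, which is exactly the Walsh integral. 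By linearity, $\delta(u) = \int_0^T\!\!\int_0^1 u(r,v)\,W(dr,dv)$ for every $u\in\mathscr{E}_a$.

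Next, I would establish density of $\mathscr{E}_a$ in $L^2_a$. Standard monotone-class / approximation arguments for two-parameter adapted processes (using step functions in time and the density of $\mathscr{S}\cap L^2(\Omega,\mathscr{F}_s)$ in $L^2(\Omega,\mathscr{F}_s)$) produce, for any $u\in L^2_a$, a sequence $u_n\in\mathscr{E}_a$ with $\|u_n-u\|_{L^2(\Omega\times[0,T]\times[0,1])}\to 0$. By the Walsh isometry, the sequence $W(u_n):=\int_0^T\!\!\int_0^1 u_n\,W(dr,dv)$ is Cauchy in $L^2(\Omega)$ and converges to the Walsh integral $W(u)$, with $\|W(u)\|_{L^2(\Omega)}=\|u\|_{L^2(\Omega,\mathscr{H})}$.

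Finally, I would invoke closability of $\delta$ (as an operator from $L^2(\Omega,\mathscr{H})$ to $L^2(\Omega)$; see \cite[Section 1.3]{Nua06}). Because $u_n\to u$ in $L^2(\Omega,\mathscr{H})$ and $\delta(u_n)=W(u_n)\to W(u)$ in $L^2(\Omega)$, the closedness of $\delta$ yields $u\in\text{Dom}\ \delta$ and $\delta(u)=W(u)$, which is precisely \eqref{eq2017-05-05-1}. The only delicate step is the density assertion for $\mathscr{E}_a$ inside $L^2_a$; once that is in hand, everything else is dictated by duality and closedness. I would handle density either by a direct construction (partition $[0,T]$ uniformly, replace $u$ by its conditional expectation at the left endpoints, and approximate each $\mathscr{F}_{t_j}$-measurable coefficient by a smooth random variable) or by a monotone class argument on the indicators of adapted measurable rectangles.
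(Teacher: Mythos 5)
Your proposal is correct and follows exactly the route the paper intends: the paper omits the proof, stating only that it is ``similar to that of \cite[Proposition 1.3.11]{Nua06}'', and that reference's argument is precisely yours --- compute $\delta$ on simple adapted processes via $\delta(Fh)=FW(h)-\langle DF,h\rangle_{\mathscr{H}}$, observe the correction term vanishes because $D_{r,v}F=0$ for $r>s$ when $F$ is $\mathscr{F}_s$-measurable, then pass to general $u\in L^2_a$ by density, the Walsh isometry, and the closedness of $\delta$. No gaps; the density of simple adapted processes in $L^2_a$, which you correctly flag as the one step needing care, is handled by the standard approximation you describe.
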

\noindent{}The proof is similar to that of \cite[Proposition 1.3.11]{Nua06} and is omitted.

Proposition \ref{lemma2017-05-04-2} enables us to use properties of Walsh integrals to estimate the $L^p(\Omega)$-norm of $\delta(u_A^2)$, as in the following lemma.

\begin{lemma} \label{lemma05-04-1}
For any $p \geq 2$, there exists a constant $c_p$, not depending on $(s_0, y_0) \in I \times J$, such that for all $\delta_1 > 0$,
\begin{align} \label{eq2017-05-04-1}
\|\delta(u_A^2)\|_{L^p(\Omega)} \leq c_p\delta_1^{3/4}.
\end{align}
\end{lemma}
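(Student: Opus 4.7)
The plan is to exploit the fact that $u_A^2$ is adapted, which lets us replace the Skorohod integral by a Walsh integral and then apply Burkholder's inequality. The estimates then reduce to deterministic bounds on the test functions $\phi_{\delta_1}$, $\phi_{\delta_1}''$, and on $\psi(Y_r)$.

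\textbf{Step 1: Adaptedness and reduction to a Walsh integral.} From the decomposition \eqref{eq2017-11-26-8}, I would first observe that for each $r \in \, ]s_0, s_0+\delta_1]$, the random variable $Y_r$ defined in \eqref{eq2017-09-28-1} is $\mathscr{F}_r$-measurable (since it only depends on $\{u(t,y_0): t \in [s_0,r]\}$, which is $\mathscr{F}_r$-measurable). Consequently $\psi(Y_r)$ and $\int_{s_0}^r \psi(Y_a)\,da$ are $\mathscr{F}_r$-measurable, so $u_A^2 \in L^2_a$. By Proposition \ref{lemma2017-05-04-2},
\begin{align*}
\delta(u_A^2) = \int_{s_0}^{s_0+\delta_1}\!\int_0^1 u_A^2(r,v)\, W(dr,dv).
\end{align*}

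\textbf{Step 2: Burkholder's inequality.} Applying Burkholder's inequality for martingale measures to the Walsh integral, I obtain for all $p \geq 2$,
\begin{align*}
\|\delta(u_A^2)\|_{L^p(\Omega)}^p \leq c_p\, \mbox{E}\!\left[\left(\int_{s_0}^{s_0+\delta_1}\!\int_0^1 |u_A^2(r,v)|^2\, dv\, dr\right)^{p/2}\right].
\end{align*}

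\textbf{Step 3: Deterministic estimates on the two pieces.} Using \eqref{eq2017-11-26-8}, I would bound $u_A^{21}$ and $u_A^{22}$ separately. For $u_A^{21}(r,v) = \psi(Y_r)\,\phi_{\delta_1}(v)\, 1_{]s_0, s_0+\delta_1]}(r)$, the bound $|\psi| \leq 1$ together with $|\phi_{\delta_1}| \leq 1$ and the fact that $\phi_{\delta_1}$ is supported in an interval of length $\sim \delta_1^{1/2}$ (by \eqref{eq2017-11-26-3}) yields
\begin{align*}
\int_{s_0}^{s_0+\delta_1}\!\int_0^1 |u_A^{21}(r,v)|^2\, dv\, dr \leq c\, \delta_1 \cdot \delta_1^{1/2} = c\, \delta_1^{3/2}.
\end{align*}
For $u_A^{22}(r,v) = \phi_{\delta_1}''(v)\int_{s_0}^r \psi(Y_a)\,da \cdot 1_{]s_0, s_0+\delta_1]}(r)$, I use $|\int_{s_0}^r \psi(Y_a)\,da| \leq \delta_1$ and the bound $|\phi_{\delta_1}''| \leq c\delta_1^{-1}$ from \eqref{eq2017-11-26-4}, together with support length $\sim \delta_1^{1/2}$, to get
\begin{align*}
\int_{s_0}^{s_0+\delta_1}\!\int_0^1 |u_A^{22}(r,v)|^2\, dv\, dr \leq c\, \delta_1 \cdot \delta_1^2 \cdot \delta_1^{-2} \cdot \delta_1^{1/2} = c\, \delta_1^{3/2}.
\end{align*}

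\textbf{Step 4: Conclusion.} Adding the two contributions gives a deterministic bound $c\,\delta_1^{3/2}$ on the squared $\mathscr{H}$-norm of $u_A^2$, so the expectation in Step 2 is at most $c_p\,\delta_1^{3p/4}$, which yields \eqref{eq2017-05-04-1}. No single step is a serious obstacle; the only subtlety is verifying adaptedness of $u_A^2$ so that Burkholder's inequality (rather than the coarser H\"older-type bound for Malliavin norms of Skorohod integrals) is available — this is precisely the point highlighted in the introduction as the reason the argument produces a sharp Gaussian-type bound.
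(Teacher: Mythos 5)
Your proposal is correct and follows essentially the same route as the paper: express $u_A^2$ via \eqref{eq2017-11-26-8}, note its adaptedness so that Proposition~\ref{lemma2017-05-04-2} turns the Skorohod integral into a Walsh integral, apply Burkholder's inequality, and then use $0 \le \psi \le 1$, the bounds \eqref{eq2017-11-26-4} on $\phi_{\delta_1}''$, and the $O(\delta_1^{1/2})$ support length of $\phi_{\delta_1}$. The only cosmetic difference is that you bound $\int_{s_0}^r\psi(Y_a)\,da$ by $\delta_1$ uniformly instead of keeping $(r-s_0)$ and integrating, which yields the same order $\delta_1^{3/2}$ for the squared $\mathscr{H}$-norm.
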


\begin{proof}
From  \eqref{eq2017-11-26-8}, we know that for $(r, v) \in \, ]s_0, s_0 + \delta_1] \times [0, 1]$,
\begin{align*}
u_A^2(r, v) = \phi_{\delta_1}(v)\psi(Y_r) - \phi_{\delta_1}''(v)\int_{s_0}^r\psi(Y_a)da.
\end{align*}
Since $u_A^2$ is adapted, by Proposition \ref{lemma2017-05-04-2}, we have
\begin{align} \label{eq2017-05-04-3}
\delta(u_A^2) &= \int_{s_0}^{s_0 + \delta_1}\int_0^1\phi_{\delta_1}(v)\psi(Y_r)W(dr, dv) - \int_{s_0}^{s_0 + \delta_1}\int_0^1W(dr, dv)\phi_{\delta_1}''(v)\int_{s_0}^r\psi(Y_a)da.
\end{align}
For the first term on the right-hand side of \eqref{eq2017-05-04-3}, by Burkholder's inequality, for any $p \geq 2$,  since $0 \leq \psi \leq 1$,
\begin{align}\label{eq2017-05-04-4}
& \left|\left|\int_{s_0}^{s_0 + \delta_1}\int_0^1\phi_{\delta_1}(v)\psi(Y_r)W(dr, dv)\right|\right|_{L^p(\Omega)}^p \nonumber \\
& \quad \leq c_p\mbox{E}\left[\left(\int_{s_0}^{s_0 + \delta_1}\int_0^1\phi_{\delta_1}^2(v)\psi^2(Y_r)drdv\right)^{p/2}\right]\nonumber\\
 & \quad \leq c_p \delta_1^{p/2}\left(\int_0^1\phi_{\delta_1}^2(v)dv\right)^{p/2} \leq c'_p \delta_1^{p/2}\delta_1^{p/4} =  c'_p \delta_1^{3p/4}.
\end{align}
For the second term on the right-hand side of \eqref{eq2017-05-04-3}, similarly, by Burkholder's inequality, for any $p \geq 2$, since $0 \leq \psi \leq 1$,
\begin{align}\label{eq2017-05-04-5}
& \left|\left|\int_{s_0}^{s_0 + \delta_1}\int_0^1W(dr, dv)\phi_{\delta_1}''(v)\int_{s_0}^r\psi(Y_a)da\right|\right|_{L^p(\Omega)}^p \nonumber \\
  & \quad \leq c_p\mbox{E}\left[\left(\int_{s_0}^{s_0 + \delta_1}dr\int_0^1dv(\phi_{\delta_1}''(v))^2\left(\int_{s_0}^r\psi(Y_a)da\right)^2\right)^{p/2}\right]\nonumber\\
& \quad \leq c_P \left(\int_{s_0}^{s_0 + \delta_1}(r - s_0)^2dr\right)^{p/2}\left(\int_0^1(\phi_{\delta_1}''(v))^2dv\right)^{p/2}\nonumber\\
& \quad \leq c_p \delta_1^{3p/2}\left(\int_{y_0 - \delta_1^{1/2}}^{y_0 + 2\delta_1^{1/2}}\delta_1^{-2}dv\right)^{p/2}  =  c'_p \delta_1^{3p/2}\delta_1^{-3p/4} =  c'_p \delta_1^{3p/4},
\end{align}
where, in the third inequality, we use \eqref{eq2017-11-26-4}.
Hence \eqref{eq2017-05-04-1} follows from \eqref{eq2017-05-04-3}, \eqref{eq2017-05-04-4} and \eqref{eq2017-05-04-5}.
\end{proof}

Since $u_A^1$ is deterministic, by \eqref{eq2017-12-26-4}, for any $p \geq 1$,
\begin{align}\label{eq2017-8}
\|\delta(u_A^1)\|_{L^p(\Omega)} = c_p\left(\int_0^T\int_0^1(u_A^1(r, v))^2drdv\right)^{1/2} \leq c'_p .
\end{align}

From (\ref{eq2017-6}), (\ref{eq2017-05-04-1}) and (\ref{eq2017-8}), using H\"{o}lder's inequality, we obtain that for all $p \geq 2$
\begin{align}\label{eq2017-9}
\|T_1\|_{L^p(\Omega)} \leq c_p\delta_1^{-1}\delta_1^{3/4}= c_p\delta_1^{-1/4}.
\end{align}
This proves the statement (a) of Proposition \ref{prop2017-11-27-1} for $i = 1$.

Next, we show that the estimate in Proposition \ref{prop2017-11-27-1}(a) holds for $T_2$.
We first use the formula \eqref{eq2017-11-26-8} to give an estimate on the $\mathscr{H}$-norm of $u_A^2$. By definition, since $0 \leq \psi \leq 1$,
\begin{align}\label{eq2017-13}
\|u_A^2\|_{\mathscr{H}}^2 &\leq 2\int_{s_0}^{s_0 + \delta_1}dr\int_0^1dv \, \psi(Y_r)^2\phi_{\delta_1}^2(v) + 2\int_{s_0}^{s_0 + \delta_1}dr\int_0^1dv\, (\phi_{\delta_1}''(v))^2\left(\int_{s_0}^r\psi(Y_a)da\right)^2 \nonumber\\
&\leq 2\delta_1 \int_{y_0 - \delta_1^{1/2}}^{y_0 + 2\delta_1^{1/2}}dv  + 2c\int_{s_0}^{s_0 + \delta_1}(r - s_0)^2dr\int_{y_0 - \delta_1^{1/2}}^{y_0 + 2\delta_1^{1/2}}\delta_1^{-2}dv \nonumber\\
&= c \, \delta_1^{3/2} + c \, \delta_1^{3}\delta_1^{-3/2}  = 2c \, \delta_1^{3/2},
\end{align}
where, in the second inequality, we use \eqref{eq2017-11-26-4}.
\begin{lemma}\label{lemma2017-08-29-1}
For any $p \geq 1$, there exists a constant $c_p$, not depending on $(s_0, y_0) \in I \times J$, such that for all $\delta_1 > 0$,
\begin{align}\label{eq2017-14}
\|\langle D\gamma_A^{2, 2}, u_A^2 \rangle_{\mathscr{H}}\|_{L^p(\Omega)} \leq c_p \delta_1^{7/4}.
\end{align}
\end{lemma}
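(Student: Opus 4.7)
\bigskip

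\noindent\textit{Proof proposal for Lemma \ref{lemma2017-08-29-1}.} The plan is to expand the inner product $\langle D\gamma_A^{2,2}, u_A^2\rangle_{\mathscr{H}}$, reduce matters to the pointwise estimates already available on $\psi'$, on $\|u_A^2\|_{\mathscr{H}}$ and on the moments of $\|DY_r\|_{\mathscr{H}}$, and then balance the exponents of $\delta_1$. First I would differentiate $\gamma_A^{2,2}=\int_{s_0}^{s_0+\delta_1}\psi(Y_r)\,dr$ exactly as in \eqref{eq2017-11-28-1}, namely
\begin{equation*}
D\gamma_A^{2,2} \;=\; \int_{s_0}^{s_0+\delta_1}\psi'(Y_r)\,DY_r\,dr,
\end{equation*}
and then, by Fubini in $\mathscr{H}$, rewrite
\begin{equation*}
\langle D\gamma_A^{2,2}, u_A^2\rangle_{\mathscr{H}}
\;=\; \int_{s_0}^{s_0+\delta_1}\psi'(Y_r)\,\langle DY_r, u_A^2\rangle_{\mathscr{H}}\,dr.
\end{equation*}

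Next I would apply the Cauchy--Schwarz inequality in $\mathscr{H}$ pointwise, giving
$|\langle DY_r, u_A^2\rangle_{\mathscr{H}}|\leq \|DY_r\|_{\mathscr{H}}\|u_A^2\|_{\mathscr{H}}$. The key observation is that, because $0\leq\psi\leq 1$, the estimate \eqref{eq2017-13} provides a \emph{deterministic} bound $\|u_A^2\|_{\mathscr{H}}\leq c\,\delta_1^{3/4}$. Combined with $|\psi'|\leq cR^{-1}$ from \eqref{eq2017-11-26-1}, Minkowski's inequality for the $dr$-integral, and the moment bound $\|\|DY_r\|_{\mathscr{H}}\|_{L^p(\Omega)}\leq c_p(r-s_0)^2\delta_1^{(p_0-\gamma_0)/2}$ from Lemma \ref{lemma2017-10-2-1}(a), this yields
\begin{equation*}
\|\langle D\gamma_A^{2,2}, u_A^2\rangle_{\mathscr{H}}\|_{L^p(\Omega)}
\;\leq\; c_p\,R^{-1}\,\delta_1^{3/4}\int_{s_0}^{s_0+\delta_1}(r-s_0)^2\,\delta_1^{(p_0-\gamma_0)/2}\,dr
\;\leq\; c_p\,R^{-1}\,\delta_1^{3/4}\,\delta_1^{3}\,\delta_1^{(p_0-\gamma_0)/2}.
\end{equation*}

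Finally I would insert the upper bound $R^{-1}\leq c'\,\delta_1^{(\gamma_0-p_0-4)/2}$ from \eqref{eq2017-12-1-3} (which uses the standing hypothesis $z_2\geq\delta_1^{1/4}$). The powers of $\delta_1$ collapse to
\begin{equation*}
\tfrac{\gamma_0-p_0-4}{2}+\tfrac{3}{4}+3+\tfrac{p_0-\gamma_0}{2}
\;=\;\tfrac{3}{4}+3-2\;=\;\tfrac{7}{4},
\end{equation*}
which gives \eqref{eq2017-14}. There is no real obstacle here beyond the bookkeeping of the $\delta_1$-exponents; the only point that requires a moment of care is noticing that $\|u_A^2\|_{\mathscr{H}}$ may be taken outside the $L^p(\Omega)$-norm as a deterministic quantity, so that the randomness in the estimate comes solely from $\|DY_r\|_{\mathscr{H}}$, for which Lemma \ref{lemma2017-10-2-1}(a) supplies the required moment bound uniformly in $(s_0,y_0)\in I\times J$.
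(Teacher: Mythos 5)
Your proposal is correct and follows essentially the same route as the paper: differentiate $\gamma_A^{2,2}$ under the integral, apply Cauchy--Schwarz in $\mathscr{H}$, use the deterministic bound $\|u_A^2\|_{\mathscr{H}}\leq c\,\delta_1^{3/4}$ from \eqref{eq2017-13} together with $\|\psi'\|_\infty\leq cR^{-1}$ and the moment bound of Lemma \ref{lemma2017-10-2-1}(a), and then absorb $R^{-1}$ via \eqref{eq2017-12-1-3}. The only (immaterial) difference is that you use Minkowski's integral inequality to move the $L^p(\Omega)$-norm inside the $dr$-integral, whereas the paper applies H\"older's inequality to the $dr$-integral before taking $p$-th moments; both yield the same factor $\delta_1^{3p}$ from the time integral and hence the same exponent $7/4$.
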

\begin{proof}
Taking the Malliavin derivative of $\gamma_A^{2, 2}$, we have
\begin{align*}
\langle D\gamma_A^{2, 2}, u_A^2 \rangle_{\mathscr{H}} = \int_{s_0}^{s_0 + \delta_1}\psi'(Y_r)\langle DY_r, u_A^2 \rangle_{\mathscr{H}}dr.
\end{align*}
By H\"{o}lder's inequality,  \eqref{eq2017-11-26-1} and \eqref{eq2017-13}, for any $p \geq 1$,
\begin{align*}
\mbox{E}\left[|\langle D\gamma_A^{2, 2}, u_A^2 \rangle_{\mathscr{H}}|^p\right] &\leq \|\psi'\|^p_{\infty}\delta_1^{p - 1}\int_{s_0}^{s_0 + \delta_1}\mbox{E}[|\langle DY_r, u_A^2 \rangle_{\mathscr{H}}|^p]dr \nonumber \\
&\leq c_pR^{-p}\delta_1^{p -1}\int_{s_0}^{s_0 + \delta_1}\mbox{E}[\|DY_r\|^p_{\mathscr{H}}\|u_A^2\|_{\mathscr{H}}^p]dr \nonumber \\
&\leq  c_pR^{-p}\delta_1^{p - 1 + 3p/4}\int_{s_0}^{s_0 + \delta_1}\mbox{E}[\|DY_r\|^p_{\mathscr{H}}]dr. \nonumber
\end{align*}
Using Lemma \ref{lemma2017-10-2-1}(a), this is bounded above by
\begin{align*}
& c_pR^{-p}\delta_1^{p - 1 + 3p/4}\delta_1^{(p_0 - \gamma_0)p/2}\int_{s_0}^{s_0 + \delta_1}(r - s_0)^{2p}dr \nonumber \\
&\quad = c_pR^{-p}\delta_1^{p - 1 + 3p/4}\delta_1^{(p_0 - \gamma_0)p/2}\delta_1^{2p + 1}\\
& \quad \leq c_p \delta_1^{(\gamma_0 - p_0 - 4)p/2}\delta_1^{p - 1 + 3p/4}\delta_1^{(p_0 - \gamma_0)p/2}\delta_1^{2p + 1}= c_p \delta_1^{7p/4},
\end{align*}
where, in the inequality, we use \eqref{eq2017-12-1-3}.
\end{proof}
By (\ref{eq2017-6}), (\ref{eq2017-8}) and (\ref{eq2017-14}), using H\"{o}lder's inequality, we obtain that for any $p \geq 1$
\begin{align}\label{eq2017-15}
\|T_2\|_{L^p(\Omega)} \leq c_p \delta_1^{-2}\delta_1^{7/4} = c_p\delta_1^{-1/4}.
\end{align}
This proves the statement (a) of Proposition \ref{prop2017-11-27-1} for $i = 2$.

We proceed to give an estimate on the moments of $T_3$.
Using \eqref{eq2017-05-04-3}, we take the Malliavin derivative of $\delta(u_A^2)$ and write
\begin{align}\label{eq2017-08-29-1}
D_{\xi, \eta}\delta(u_A^2) &= 1_{[s_0, s_0 + \delta_1]}(\xi)\psi(Y_{\xi})\phi_{\delta_1}(\eta) - 1_{[s_0, s_0 + \delta_1]}(\xi)\phi_{\delta_1}''(\eta)\int_{s_0}^{\xi}\psi(Y_a)da\nonumber \\
&\quad  + \int_{s_0}^{s_0 + \delta_1}\int_0^1\phi_{\delta_1}(v)\psi'(Y_r)D_{\xi, \eta}Y_r \, W(dr, dv) \nonumber\\
&\quad - \int_{s_0}^{s_0 + \delta_1}\int_0^1W(dr, dv)\phi_{\delta_1}''(v)\int_{s_0}^{r}\psi'(Y_a)D_{\xi, \eta}Y_a \, da.
\end{align}
It is clear that the inner product of the first two terms on the right-hand side of (\ref{eq2017-08-29-1}) and $u_A^1$ is equal to $\langle u_A^2, u_A^1 \rangle_{\mathscr{H}}$.
By the stochastic Fubini theorem (see  \cite[Chapter 1, Theorem 5.30]{DKMNXR08} or  \cite[Theorem 2.6]{Wal86}), we see that the inner product of the third term on the right-hand side of (\ref{eq2017-08-29-1}) and $u_A^1$ is equal to
\begin{align}\label{eq2017-08-29-2}
 \int_{s_0}^{s_0 + \delta_1}\int_0^1\phi_{\delta_1}(v)\psi'(Y_r)\langle DY_r, u_A^1 \rangle_{\mathscr{H}}W(dr, dv),
\end{align}
since the condition of the stochastic Fubini theorem can be verified:
\begin{align*}
& \mbox{E}\left[\int_{s_0}^{s_0 + \delta_1}d\xi \int_0^1d\eta \, |u_A^1(\xi, \eta)|\int_{s_0}^{s_0 + \delta_1}dr\int_0^1dv \, \phi_{\delta_1}^2(v)(\psi'(Y_r))^2(D_{\xi, \eta}Y_r)^2 \right] \\
&  \quad \quad \leq c\mbox{E}\left[\int_{s_0}^{s_0 + \delta_1}d\xi \int_0^1d\eta \int_{s_0}^{s_0 + \delta_1}(D_{\xi, \eta}Y_r)^2dr \right] \\
&  \quad \quad \leq c \delta_1\sup_{r \in [s_0, s_0 + \delta_1]}\mbox{E}\left[\|DY_{r}\|_{\mathscr{H}}^2\right] < \infty,
\end{align*}
where the last inequality is due to (\ref{eq2017-11-26-9}). Similarly, the inner product of the last term on the right-hand side of (\ref{eq2017-08-29-1}) and $u_A^1$ is equal to
\begin{align}\label{eq2017-08-29-3}
\int_{s_0}^{s_0 + \delta_1}\int_0^1W(dr, dv)\phi_{\delta_1}''(v)\int_{s_0}^{r}\psi'(Y_a)\langle DY_a, u_A^1 \rangle_{\mathscr{H}}da.
\end{align}
Therefore, by (\ref{eq2017-08-29-1}), (\ref{eq2017-08-29-2}) and (\ref{eq2017-08-29-3}),  we write
\begin{align}\label{eq2017-05-11-1}
\langle D\delta(u_A^2), u_A^1 \rangle_{\mathscr{H}} &=  \langle u_A^2, u_A^1 \rangle_{\mathscr{H}} + \int_{s_0}^{s_0 + \delta_1}\int_0^1\psi'(Y_r)\langle DY_r, u_A^1 \rangle_{\mathscr{H}}\phi_{\delta_1}(v)W(dr, dv)\nonumber \\
& \quad - \int_{s_0}^{s_0 + \delta_1}\int_0^1W(dr, dv) \, \phi''_{\delta_1, \delta_2}(v)\int_{s_0}^rda \, \psi'(Y_a)\langle DY_a, u_A^1 \rangle_{\mathscr{H}}\nonumber \\
&:= \bar{T}_{31} + \bar{T}_{32} - \bar{T}_{33}.
\end{align}
From (\ref{eq2017-13}) and \eqref{eq2017-12-26-4}, it is clear that for any $p \geq 1$,
\begin{align}\label{eq2017-05-11-2}
\|\bar{T}_{31}\|_{L^p(\Omega)} \leq c_p \delta_1^{3/4}.
\end{align}
By Burkholder's inequality and using \eqref{eq2017-11-26-1} and \eqref{eq2017-12-26-4}, we have for any $p \geq 2$,
\begin{align}\label{eq2017-05-11-3}
\mbox{E}[|\bar{T}_{32}|^p] &\leq c_p \mbox{E}\left[\left(\int_{s_0}^{s_0 + \delta_1}\int_0^1\psi'(Y_r)^2\langle DY_r, u_A^1 \rangle_{\mathscr{H}}^2\phi_{\delta_1}^2(v)drdv\right)^{p/2}\right] \nonumber \\
&\leq c_p R^{-p}\mbox{E}\left[\left(\int_{s_0}^{s_0 + \delta_1}\|DY_r\|_{\mathscr{H}}^2dr\int_0^1\phi_{\delta_1}^2(v)dv\right)^{p/2}\right]\nonumber \\
&= c_p R^{-p}\left(\int_0^1\phi_{\delta_1}^2(v)dv\right)^{p/2}\mbox{E}\left[\left(\int_{s_0}^{s_0 + \delta_1}\|DY_r\|_{\mathscr{H}}^2dr\right)^{p/2}\right].
\end{align}
By H\"{o}lder's inequality and (\ref{eq2017-10}), we see that (\ref{eq2017-05-11-3}) is bounded above by
\begin{align}\label{eq2017-08-31-1}
&  c_p R^{-p}\delta_1^{p/4}\delta_1^{p/2 - 1}\int_{s_0}^{s_0 + \delta_1}\mbox{E}[\|DY_r\|_{\mathscr{H}}^p]dr\nonumber \\
&  \quad \leq c_p R^{-p}\delta_1^{p/4}\delta_1^{p/2 - 1}\delta_1^{(p_0 - \gamma_0)p/2}\int_{s_0}^{s_0 + \delta_1}(r - s_0)^{2p}dr\nonumber \\
&  \quad = c_p R^{-p} \delta_1^{(2(p_0 - \gamma_0) + 11)p/4} \leq c'_p \delta_1^{(\gamma_0 - p_0 - 4)p/2}\delta_1^{(2(p_0 - \gamma_0) + 11)p/4} = c'_p \delta_1^{3p/4},
\end{align}
where in the last inequality we use \eqref{eq2017-12-1-3}.

We now give an estimate on the moments of $\bar{T}_{33}$. By Burkholder's inequality and using \eqref{eq2017-11-26-1} and \eqref{eq2017-12-26-4}, for any $p \geq 2$,
\begin{align}\label{eq2017-05-11-4}
\mbox{E}[|\bar{T}_{33}|^p] &\leq c_p\mbox{E}\left[\left(\int_{s_0}^{s_0 + \delta_1}\int_0^1\left(\int_{s_0}^r\psi'(Y_a)\langle DY_a, u_A^1 \rangle_{\mathscr{H}}da\right)^2(\phi_{\delta_1}''(v))^2drdv\right)^{p/2}\right] \nonumber \\
&\leq c_pR^{-p}\left(\int_0^1(\phi_{\delta_1}''(v))^2dv\right)^{p/2}\mbox{E}\left[\left(\int_{s_0}^{s_0 + \delta_1}\left(\int_{s_0}^r\|DY_a\|_{\mathscr{H}}da\right)^2dr\right)^{p/2}\right].
\end{align}
Using H\"{o}lder's inequality twice and \eqref{eq2017-11-26-4}, \eqref{eq2017-05-11-4} is bounded above by
\begin{align}\label{eq2017-08-31-2}
&c_pR^{-p}\delta_1^{-3p/4}\mbox{E}\left[\left(\int_{s_0}^{s_0 + \delta_1}dr(r - s_0)\int_{s_0}^r\|DY_a\|^2_{\mathscr{H}}da\right)^{p/2}\right] \nonumber \\
&\quad \leq c_pR^{-p}\delta_1^{-3p/4}\left(\int_{s_0}^{s_0 + \delta_1}dr\int_{s_0}^rda\right)^{p/2 - 1}\int_{s_0}^{s_0 + \delta_1}dr(r - s_0)^{p/2}\int_{s_0}^r\mbox{E}[\|DY_a\|^p_{\mathscr{H}}]da.
\end{align}
Applying the estimate in (\ref{eq2017-10}), (\ref{eq2017-08-31-2}) is bounded above by
\begin{align}\label{eq2017-08-31-3}
& c_pR^{-p}\delta_1^{-3p/4}\delta_1^{p -2}\delta_1^{(p_0 - \gamma_0)p/2}\int_{s_0}^{s_0 + \delta_1}dr(r - s_0)^{p/2}\int_{s_0}^r(a - s_0)^{2p}da \nonumber \\
&  \quad =c_pR^{-p}\delta_1^{(2(p_0 - \gamma_0) + 11)p/4} \leq c'_p \delta_1^{(\gamma_0 - p_0 - 4)p/2}\delta_1^{(2(p_0 - \gamma_0) + 11)p/4} = c'_p \delta_1^{3p/4},
\end{align}
where in the inequality we use \eqref{eq2017-12-1-3}.

Therefore, by (\ref{eq2017-05-11-2}),(\ref{eq2017-08-31-1}), (\ref{eq2017-08-31-3}) and (\ref{eq2017-6}), we have obtained that for any $p \geq 2$,
\begin{align}\label{eq2017-05-11-5}
\|T_3\|_{L^p(\Omega)} \leq c_p\delta_1^{-1/4}.
\end{align}
This proves the statement (a) of Proposition \ref{prop2017-11-27-1} for $i = 3$.

Therefore, we have finished the proof of Proposition \ref{prop2017-11-27-1}(a).

\subsection{Proof of Proposition \ref{prop2017-11-27-1}(b)} \label{section4.5.2}

We are going to show that the three terms $T_4$, $T_5$ and $T_6$ are equal to zero. First, we apply Lemma \ref{lemma2017-11-27-1} to see that
for any $t, s \in [s_0, s_0 + \delta_1]$,
\begin{align} \label{eq2017-08-31-4}
\langle D(u(t, y_0) - u(s, y_0)), u_A^1 \rangle_{\mathscr{H}} &= \int_0^T\int_{0}^1 (1_{\{r < t\}}G_{\alpha}(t - r, y_0, v) - 1_{\{r < s\}}G_{\alpha}(s - r, y_0, v))\nonumber\\
& \quad \quad \quad \quad  \times \left(\frac{\partial}{\partial r} - \frac{\partial^2}{\partial v^2}\right)(f_0(r)g_0(v))drdv\nonumber \\
&= f_0(t)g_0(x) - f_0(s)g_0(y_0) = 1 - 1 = 0,
\end{align}
by the definition of the functions $f_0$ and $g_0$.
Furthermore, by (\ref{eq2017-11}) and (\ref{eq2017-08-31-4}), we know that for $r \in [s_0, s_0 + \delta_1]$,
\begin{align}\label{eq2017-05-11-6}
\langle DY_r, u_A^1 \rangle_{\mathscr{H}} &= 2p_0\int_{[s_0, r]^2}dsdt\, \frac{(u(t, y_0) - u(s, y_0))^{2p_0 - 1}}{|t - s|^{\gamma_0/2}}\langle D(u(t, y_0) - u(s, y_0)), u_A^1 \rangle_{\mathscr{H}}\nonumber \\
&= 0.
\end{align}
Hence, by \eqref{eq2017-11-28-1},
\begin{align}\label{eq2017-05-11-7}
\langle D\gamma_A^{2, 2}, u_A^1 \rangle_{\mathscr{H}} = \int_{s_0}^{y_0 + \delta_1} \psi'(Y_r)\langle DY_r, u_A^1 \rangle_{\mathscr{H}}dr = 0,
\end{align}
which implies that $T_4 = T_5 = 0$.

We proceed to prove that $T_6$ vanishes. Similar to  \eqref{eq2017-08-31-4}, for any $t, s \in [s_0, s_0 + \delta_1]$,
\begin{align} \label{eq2017-10-03-1}
& \langle D(u(t, y_0) - u(s, y_0)), u_A^2 \rangle_{\mathscr{H}} \nonumber\\
& \quad = \int_{s_0}^{s_0 + \delta_1}dr\int_0^1dv(1_{\{r < t\}}G(t - r, y_0, v) - 1_{\{r < s\}}G(s - r, y_0, v))\left(\frac{\partial}{\partial r} - \frac{\partial^2}{\partial v^2}\right)H(r, v)\nonumber \\
& \quad = H(t, y_0) - H(s, y_0).
\end{align}
Hence, by \eqref{eq2017-09-28-3}, for $r \in [s_0, s_0 + \delta_1]$,
\begin{align} \label{eq2017-05-11-8}
\langle DY_r, u_A^2 \rangle_{\mathscr{H}} &= 2p_0\int_{[s_0, r]^2}dsdt\,\frac{(u(t, y_0) - u(s, y_0))^{2p_0 - 1}}{|t - s|^{\gamma_0/2}}\langle D(u(t, y_0) - u(s, y_0)), u_A^2 \rangle_{\mathscr{H}}\nonumber  \\
 &= 2p_0\int_{[s_0, r]^2}dsdt\,\frac{(u(t, y_0) - u(s, y_0))^{2p_0 - 1}}{|t - s|^{\gamma_0/2}}(H(t, y_0) - H(s, y_0)) \nonumber\\
&= 2p_0\int_{[s_0, r]^2}dsdt \, \frac{(u(t, y_0) - u(s, y_0))^{2p_0 - 1}}{|t - s|^{\gamma_0/2}}\int_s^t\psi(Y_a)da,
\end{align}
where in the last equality we use the definition of the function $(t, x) \mapsto H(t, x)$. Moreover,
\begin{align} \label{eq2017-05-11-100}
& \langle D\langle DY_r, u_A^2 \rangle_{\mathscr{H}}, u_A^1\rangle_{\mathscr{H}} \nonumber \\
& \quad = 2p_0(2p_0 - 1)\int_{[s_0, r]^2}dsdt\, \frac{(u(t, y_0) - u(s, y_0))^{2p_0 - 2}}{|t - s|^{\gamma_0/2}} \nonumber \\
&  \quad \quad\quad \qquad\quad\quad \quad\quad \quad \times \langle D(u(t, y_0) - u(s, y_0)), u_A^1 \rangle_{\mathscr{H}}\int_s^t\psi(Y_a)da \nonumber \\
& \quad  \quad   + 2p_0\int_{[s_0, r]^2}dsdt\, \frac{(u(t, y_0) - u(s, y_0))^{2p_0 - 1}}{|t - s|^{\gamma_0/2}}\int_s^t\psi'(Y_a)\langle DY_a, u_A^1 \rangle_{\mathscr{H}}da \nonumber \\
& \quad = 0 + 0 = 0,
\end{align}
where, on the right-hand side of the equality, the first term vanishes due to (\ref{eq2017-08-31-4}) and the second term vanishes because of (\ref{eq2017-05-11-6}). Therefore, by definition of $\gamma_A^{2, 2}$,
\begin{align} \label{eq2017-05-11-9}
\langle D\langle D\gamma_A^{2, 2}, u_A^2 \rangle_{\mathscr{H}}, u_A^1\rangle_{\mathscr{H}} &= \left\langle D \int_{s_0}^{s_0 + \delta_1}\psi'(Y_r)\langle DY_r, u_A^2 \rangle_{\mathscr{H}}dr, u_A^1\right\rangle_{\mathscr{H}} \nonumber \\
  &=  \int_{s_0}^{s_0 + \delta_1}\psi''(Y_r)\langle DY_r, u_A^1 \rangle_{\mathscr{H}}\langle DY_r, u_A^2 \rangle_{\mathscr{H}}dr\nonumber \\
  & \quad  + \int_{s_0}^{s_0 + \delta_1}\psi'(Y_r) \langle D\langle DY_r, u_A^2 \rangle_{\mathscr{H}}, u_A^1\rangle_{\mathscr{H}}dr \nonumber \\
   &= 0,
\end{align}
by \eqref{eq2017-05-11-6} and \eqref{eq2017-05-11-100}, which implies $T_6 = 0$.

This proves the statement (b) of Proposition \ref{prop2017-11-27-1}.

\subsection{Estimates for the tail probabilities} \label{section4.4.5}

\begin{lemma}
There exists a finite positive constant $c$, not depending on $(s_0, y_0) \in I \times J$, such that for all  $z_1 \in \mathbb{R}$,
\begin{align}\label{eq2017-09-07-1400}
 \mbox{P}\{|F_1| > |z_1|\} \leq c\, (|z_1|^{-1}\wedge 1)e^{-z_1^2/c},
\end{align}
and for all $\delta_1 > 0$ and $z_2 > 0$,
\begin{align}\label{eq2016-12-27-20000}
\mbox{P}\{F_2 > z_2\} \leq c\, \exp(-z_2^2/(c\, \delta_1^{1/2})).
\end{align}
\end{lemma}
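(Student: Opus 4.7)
The plan is to treat the two estimates separately, since $F_1$ is a single centered Gaussian random variable while $F_2$ is the supremum of a centered Gaussian process, so the Borell--TIS concentration inequality is the natural tool.

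For the bound on $F_1 = u(s_0, y_0)$, I would observe that $F_1$ is a centered Gaussian random variable, whose variance $\sigma_1^2 = \mathrm{E}[u(s_0, y_0)^2]$ is bounded above by a constant depending only on $T$ (by \eqref{eq2016-06-14-1} with $(s, y) = (0, y_0)$ and $p = 2$, since $u(0, y_0) = 0$), and bounded below away from $0$ uniformly for $(s_0, y_0) \in I \times J$ (this is standard for the heat equation on $[0,1]$ with $s_0 \geq c_1 > 0$; it is implicit, e.g., in \cite[Lemma 4.2]{DKN07}). The Mills ratio estimate for the standard normal then yields, for $|z_1| \geq 1$,
\begin{align*}
\mathrm{P}\{|F_1| > |z_1|\} \leq \frac{2\sigma_1}{|z_1|\sqrt{2\pi}} \exp\!\left(-\frac{z_1^2}{2\sigma_1^2}\right) \leq c\, |z_1|^{-1}\exp(-z_1^2/c),
\end{align*}
while for $|z_1| \leq 1$ the trivial bound $\mathrm{P}\{|F_1| > |z_1|\} \leq 1 \leq c\, \exp(-z_1^2/c)$ is of the right form. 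Combining these gives \eqref{eq2017-09-07-1400}.

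For the bound on $F_2$, the strategy is concentration of the supremum around its mean. First, by \eqref{eq2016-06-14-1},
\begin{align*}
\sigma_2^2 := \sup_{t \in [s_0, s_0 + \delta_1]}\mathrm{Var}(\bar{u}(t, y_0)) = \sup_{t \in [s_0, s_0 + \delta_1]}\mathrm{E}[(u(t, y_0) - u(s_0, y_0))^2] \leq C\, \delta_1^{1/2}.
\end{align*}
Second, I would estimate $\mathrm{E}[F_2]$ using Dudley's entropy bound (or directly Kolmogorov's continuity argument together with the fact that $(t, y_0) \mapsto \bar{u}(t, y_0)$ is almost $\frac{1}{4}$-H\"older in time on an interval of length $\delta_1$): one obtains $\mathrm{E}[F_2] \leq c\, \delta_1^{1/4}$ with a constant uniform in $(s_0, y_0)$. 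The Borell--Tsirelson--Ibragimov--Sudakov inequality (see e.g.\ \cite[Theorem 2.1.1]{AdT07}) then gives, for every $\lambda > 0$,
\begin{align*}
\mathrm{P}\{F_2 > \mathrm{E}[F_2] + \lambda\} \leq \exp\!\left(-\frac{\lambda^2}{2\sigma_2^2}\right) \leq \exp\!\left(-\frac{\lambda^2}{2C\, \delta_1^{1/2}}\right).
\end{align*}
Taking $\lambda = z_2 - c\, \delta_1^{1/4}$, one concludes \eqref{eq2016-12-27-20000} in the regime $z_2 \geq 2c\, \delta_1^{1/4}$, where $\lambda \geq z_2/2$. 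In the complementary regime $z_2 < 2c\, \delta_1^{1/4}$, we have $z_2^2/\delta_1^{1/2}$ bounded above by an absolute constant, so the trivial bound $\mathrm{P}\{F_2 > z_2\} \leq 1$ is already of the claimed form after adjusting $c$.

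The only mildly delicate point is the uniformity of the constants in $(s_0, y_0) \in I \times J$: the variance estimate $\sigma_2^2 \leq C\delta_1^{1/2}$ is uniform thanks to \eqref{eq2016-06-14-1}, and the bound $\mathrm{E}[F_2] \leq c\, \delta_1^{1/4}$ follows from Dudley's bound applied to the pseudo-metric $d(t, s) = \mathrm{E}[(\bar u(t, y_0) - \bar u(s, y_0))^2]^{1/2} \leq C|t-s|^{1/4}$ on an interval of length $\delta_1$, so the metric entropy integral is bounded by a universal constant times $\delta_1^{1/4}$. Everything else is bookkeeping.
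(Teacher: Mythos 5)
Your proposal is correct and follows essentially the same route as the paper: the $F_1$ bound via the uniform two-sided variance bounds and the Mills-ratio estimate, and the $F_2$ bound via $\sigma^2 \leq C\delta_1^{1/2}$ from \eqref{eq2016-06-14-1}, the entropy/chaining bound $\mathrm{E}[F_2] \leq c\,\delta_1^{1/4}$ (the paper cites \cite[(4.50)]{DKN07} where you invoke Dudley, but these are the same estimate), Borell's inequality, and a trivial treatment of the regime $z_2 \lesssim \delta_1^{1/4}$. The only cosmetic difference is how the small-$z_2$ case is absorbed into the constant.
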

\begin{proof}
We first bound $\mbox{P}\{|F_1| > |z_1|\}$. Since the variance of $u(s_0, y_0)$ is bounded above and below by positive constants uniformly over $(s_0, y_0) \in I \times J$  (see \cite[(4.5)]{DKN07}), there are constants $c_1, c_2, c_3, c_4$ independent of $(s_0, y_0) \in I \times J$ such that for all $z_1 \in \mathbb{R}$
\begin{align}\label{eq2017-09-07-14}
 \mbox{P}\{|F_1| > |z_1|\} \leq c_1\int_{|z_1|}^{+\infty}e^{-y^2/c_2}dy \leq c_3(|z_1|^{-1}\wedge 1)e^{-z_1^2/c_4},
\end{align}
where the last inequality holds by \cite[7.1.13, p.298]{AlS79}. This proves \eqref{eq2017-09-07-1400}.

We denote
$
\sigma^2 := \sup_{t \in [s_0, s_0 + \delta_1]}\mbox{E}[\bar{u}(t, y_0)^2].
$
 From \eqref{eq2016-06-14-1}, we have $\sigma^2 \leq C \, \delta_1^{1/2}$. On the other hand, by \cite[(4.50)]{DKN07}, we have
\begin{align}\label{eq2017-09-15}
\mbox{E}[F_2] &\leq \mbox{E}\left[\sup\limits_{t \in [s_0, s_0 + \delta_1] }|u(t, y_0) - u(s_0, y_0)|\right] \nonumber\\
&\leq \mbox{E}\left[\sup\limits_{[|t - s_0|^{1/2} + |x - y_0|]^{1/2}\leq \delta_1^{1/4}}|u(t, x) - u(s_0, y_0)|\right] \leq c \, \delta_1^{1/4}.
\end{align}
Applying Borell's inequality (see \cite[(2.6)]{Adl90}) and the fact that $(z_2 - \mathrm{E}[F_2])^2 \geq \frac{2}{3}z_2^2 - 2\mathrm{E}[F_2]^2$, we see that for all $z_2 > c\,  \delta_1^{1/4}$ (here  $c$ is the constant in \eqref{eq2017-09-15}),
\begin{align} \label{eq2018-12-11-1}
\mbox{P}\{F_2 > z_2\} &\leq 2 \exp\left(-(z_2 - \mbox{E}[F_2])^2/(2\sigma^2)\right) \leq \bar{c}\exp\left(-z_2^2/(3C\delta_1^{1/2})\right).
\end{align}
Since for $0 \leq z_2 \leq c \delta_1^{1/4}$,
$
\exp(-z_2^2/(3C\delta_1^{1/2})) \geq e^{-c^2/(3C)},
$
we can find a constant $\tilde{c}$ such that for all $z_2 > 0$,
\begin{align}\label{eq2016-12-27-2}
\mbox{P}\{F_2 > z_2\} \leq \tilde{c}\exp(-z_2^2/(3C\delta_1^{1/2})).
\end{align}
This proves \eqref{eq2016-12-27-20000}.
\end{proof}

Finally, we prove Theorem \ref{theorem100}.

\begin{proof}[Proof of Theorem \ref{theorem100}]
This follows from (\ref{eq2017-09-07-13}), \eqref{eq2017-09-07-1400}, \eqref{eq2016-12-27-20000} and \eqref{eq2017-05-11-10}.
\end{proof}

\section{Gaussian-type upper bound on the density of $M_0$}\label{section4.7}
In this section, we assume $J \subset \, ]0, 1[$ and $\delta_1$, $\delta_2$ satisfy the conditions in \eqref{eq2017-09-2600}.

From the formula for the probability density function of $M_0$ in \eqref{eq2018-01-02-15}, by the Cauchy-Schwartz inequality,
\begin{align} \label{eq2018-01-04-4}
p_0(z) \leq \mbox{P}\{M_0 > z \}^{1/2}\|\delta(u_{\bar{A}}/\gamma_{\bar{A}})\|_{L^2(\Omega)}.
\end{align}

\begin{prop}\label{prop0511-100}
\begin{itemize}
  \item [(a)]There exists a finite positive constant $c$, not depending on $y_0 \in  J$, such that for all small $\delta_1, \, \delta_2 > 0$ and for all $z \geq (\delta_1^{1/2} + \delta_2)^{1/2}$,
 \begin{align}\label{eq2018-01-02-17}
                     \|\delta(u_{\bar{A}}/\gamma_{\bar{A}})\|_{L^2(\Omega)} \leq c \, (\delta_1^{1/2} + \delta_2)^{-1/2}.
                     \end{align}
  \item [(b)] There exists a finite positive constant $c$, not depending on $y_0 \in  J$, such that for all $\delta_1, \, \delta_2 > 0$ and for all $z > 0$,
  \begin{align}\label{eq2018-01-4-3}
  \mbox{P}\{M_0 > z \} \leq c \, \exp(- z^2/(c \, (\delta_1^{1/2} + \delta_2))).
  \end{align}
\end{itemize}
\end{prop}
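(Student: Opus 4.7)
\textbf{Plan of proof of Proposition \ref{prop0511-100}.}
Part (a) parallels the argument of Proposition \ref{prop0511-1} in Section \ref{section4.5.1}, but is simpler because there is now a single Skorohod integral instead of an iterated one. The plan is to expand
\[\delta(u_{\bar{A}}/\gamma_{\bar{A}}) \;=\; \frac{\delta(u_{\bar{A}})}{\gamma_{\bar{A}}} \;+\; \frac{\langle D\gamma_{\bar{A}}, u_{\bar{A}}\rangle_{\mathscr{H}}}{\gamma_{\bar{A}}^2}\]
via the product rule \cite[(1.48)]{Nua06}, then to bound each term in $L^2(\Omega)$ by Cauchy--Schwarz, using Lemma \ref{lemma2017-05-12-1}(b) (which gives $\|\gamma_{\bar{A}}^{-1}\|_{L^p} \leq c_p\delta^{-2}$ when $z \geq \delta^{1/2}$, writing $\delta = \delta_1^{1/2} + \delta_2$).

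For the first term I will use that $u_{\bar{A}}$, as given by \eqref{eq2017-11-26-511}, is adapted, so Proposition \ref{lemma2017-05-04-2} identifies $\delta(u_{\bar{A}})$ with a Walsh integral. Splitting $u_{\bar{A}}$ into its two pieces (the $\bar\psi(\bar Y_r)\bar\phi_\delta(v)$ piece and the $\bar\phi''_\delta(v)\int_0^r \bar\psi(\bar Y_a)da$ piece), applying Burkholder's inequality, using $0 \leq \bar\psi \leq 1$ together with the support of $\bar\phi_\delta$ of width $\sim \delta$ and the estimates \eqref{eq2017-11-26-4000}, one obtains, in analogy with Lemma \ref{lemma05-04-1},
\[\|\delta(u_{\bar{A}})\|_{L^p(\Omega)} \leq c_p\, \delta^{3/2}, \qquad \|u_{\bar{A}}\|_{\mathscr{H}} \leq c\, \delta^{3/2}.\]
Combined with $\|\gamma_{\bar{A}}^{-1}\|_{L^4} \leq c\delta^{-2}$, this gives $\|\delta(u_{\bar{A}})/\gamma_{\bar{A}}\|_{L^2} \leq c\delta^{-1/2}$.

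For the second term, the plan is to differentiate $\gamma_{\bar{A}}$ (exchanging $D$ and $\int_0^{\Delta_\bullet}$ as in Lemma \ref{lemma13}) to obtain $D\gamma_{\bar{A}} = \int_0^{\Delta_\bullet}\bar\psi'(\bar Y_r)D\bar Y_r\, dr$, and then to apply Hölder's inequality together with $\|\bar\psi'\|_\infty \leq c\bar R^{-1}$ from \eqref{eq2017-11-26-100}, the moment estimate \eqref{eq2018-01-03-10} of Lemma \ref{lemma2017-10-2-1}(b), and the pathwise bound on $\|u_{\bar{A}}\|_{\mathscr{H}}$ above. Under the assumption $z \geq \delta^{1/2}$, the definition \eqref{eq2017-09-28-411} of $\bar a$ together with \eqref{eq2017-10-12-500} yields $\bar R^{-1} \leq c\delta^{\gamma_0 - p_0 - 4}$, and a short accounting of the powers of $\delta$ then produces $\|\langle D\gamma_{\bar{A}}, u_{\bar{A}}\rangle_{\mathscr{H}}\|_{L^p} \leq c_p\delta^{7/2}$. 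Combined with $\|\gamma_{\bar{A}}^{-2}\|_{L^4} \leq c\delta^{-4}$, this yields $c\delta^{-1/2}$ for the second term as well, establishing \eqref{eq2018-01-02-17}. The only real bookkeeping hurdle will be matching the powers of $\delta$ arising from $\bar R^{-1}$, $\gamma_{\bar{A}}^{-1}$ and the moment bound \eqref{eq2018-01-03-10}, exactly as in the derivation of the constants in Section \ref{section4.5.1}.

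Part (b) is a direct application of Borell's inequality \cite[(2.6)]{Adl90}. First, by \eqref{eq2016-06-14-1} with $(s,y) = (0,y_0)$, the variance is bounded by
\[\sigma^2 \;:=\; \sup_{(t,x) \in [0,\delta_1]\times[y_0, y_0+\delta_2]}\mbox{E}[u(t,x)^2] \;\leq\; C\delta_1^{1/2} \;\leq\; C(\delta_1^{1/2}+\delta_2).\]
Second, on this rectangle the canonical distance from $(0,y_0)$ is at most $(\delta_1^{1/2}+\delta_2)^{1/2}$, so by the same chaining bound used in \eqref{eq2017-09-15} (that is, \cite[(4.50)]{DKN07}),
\[\mbox{E}[M_0] \;\leq\; \mbox{E}\!\left[\sup_{(t,x) \in [0,\delta_1]\times[y_0, y_0+\delta_2]}|u(t,x) - u(0, y_0)|\right] \;\leq\; c(\delta_1^{1/2}+\delta_2)^{1/2}.\]
Borell's inequality combined with $(z - \mbox{E}[M_0])^2 \geq \tfrac{2}{3}z^2 - 2\mbox{E}[M_0]^2$ then gives \eqref{eq2018-01-4-3} for $z$ larger than a fixed multiple of $(\delta_1^{1/2}+\delta_2)^{1/2}$; for smaller $z$ the bound is made trivial by adjusting the constant $c$, exactly as in the passage from \eqref{eq2018-12-11-1} to \eqref{eq2016-12-27-2}. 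The main obstacle is again the power-counting in part (a); part (b) is essentially immediate from the tools already developed.
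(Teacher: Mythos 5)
Your proposal is correct and follows essentially the same route as the paper: part (a) is proved exactly via the decomposition $\delta(u_{\bar A}/\gamma_{\bar A})=\delta(u_{\bar A})/\gamma_{\bar A}+\langle D\gamma_{\bar A},u_{\bar A}\rangle_{\mathscr H}/\gamma_{\bar A}^{2}$ together with the three estimates $\|\delta(u_{\bar A})\|_{L^p}\leq c_p\delta^{3/2}$, $\|u_{\bar A}\|_{\mathscr H}\leq c\,\delta^{3/2}$, $\|\langle D\gamma_{\bar A},u_{\bar A}\rangle_{\mathscr H}\|_{L^p}\leq c_p\delta^{7/2}$ and Lemma \ref{lemma2017-05-12-1}(b), and part (b) is the same Borell-inequality argument. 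The power counting you outline matches the paper's computations.
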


\begin{proof}[Proof of Theorem \ref{theorem2018-01-03-1}]
This is an immediate  consequence of \eqref{eq2018-01-04-4} and Proposition \ref{prop0511-100}.
\end{proof}
The proof of Proposition \ref{prop0511-100} is given in the following two subsections.

\subsection{Proof of Proposition \ref{prop0511-100}(a)}

Throughout this section, we assume that
\begin{align}\label{eq2017-05-04-60011}
z \geq (\delta_1^{1/2} + \delta_2)^{1/2} = \delta^{1/2}.
\end{align}
Recalling the definition of $\bar{R}$ in \eqref{eq2017-10-12-500}, under the assumption \eqref{eq2017-05-04-60011}, we see from \eqref{eq2017-09-28-411} that
\begin{align}\label{eq2017-12-1-30011}
\bar{R}^{-1} &= c^{-1}\bar{a}^{-2p_0}\delta^{\gamma_0 - 4} = c'z^{-2p_0}\delta^{\gamma_0 - 4}  \leq c' \, \delta^{\gamma_0 - p_0 - 4}.
\end{align}

In order to prove Proposition \ref{prop0511-100}(a), we need the following lemmas. Recall the definition of $\Delta_{\bullet}$ in \eqref{eq2018-01-03-1} and of $u_{\bar{A}}$ in \eqref{eq2017-11-26-511}; and notice that for $(r, v) \in [0, \Delta_{\bullet}] \times [0, 1]$,
\begin{align}\label{eq2018-10-09-2}
u_{\bar{A}}(r, v) = \bar{\phi}_{\delta}(v)\bar{\psi}(\bar{Y}_r) - \bar{\phi}_{\delta}''(v)\int_{0}^r\bar{\psi}(\bar{Y}_a)da.
\end{align}

\begin{lemma} \label{lemma2018-01-02-6}
For any $p \geq 2$, there exists a constant $c_p$, not depending on $y_0 \in  J$, such that for all $\delta_1, \, \delta_2 > 0$,
\begin{align}\label{eq2018-01-04-10}
\|\delta(u_{\bar{A}})\|_{L^p(\Omega)} \leq c_p \, \delta^{3/2}.
\end{align}
\end{lemma}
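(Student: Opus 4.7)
The argument should parallel the proof of Lemma \ref{lemma05-04-1} for $\delta(u_A^2)$, adapted to the exponents arising from the space-time rectangle rather than just a time interval. The plan is as follows.

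First, I would use the explicit expression \eqref{eq2018-10-09-2} for $u_{\bar A}$ on $]0,\Delta_\bullet]\times[0,1]$, namely $u_{\bar A}(r,v)=\bar\phi_\delta(v)\bar\psi(\bar Y_r)-\bar\phi''_\delta(v)\int_0^r\bar\psi(\bar Y_a)\,da$. Since $\bar Y_r$ is $\mathscr{F}_r$-measurable (it is defined from stochastic integrals of deterministic kernels up to time $r$) and $\bar\psi$ is deterministic and bounded, the process $u_{\bar A}$ is adapted. By Proposition \ref{lemma2017-05-04-2} the Skorohod integral therefore coincides with the Walsh integral, and we can write
\begin{align*}
\delta(u_{\bar A}) &= \int_0^{\Delta_\bullet}\!\!\int_0^1 \bar\phi_\delta(v)\bar\psi(\bar Y_r)\,W(dr,dv) - \int_0^{\Delta_\bullet}\!\!\int_0^1 \bar\phi''_\delta(v)\Big(\int_0^r\bar\psi(\bar Y_a)\,da\Big)\,W(dr,dv).
\end{align*}

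Next, I would apply Burkholder's inequality to each of the two stochastic integrals. Using $0\le\bar\psi\le 1$, together with the bounds $\int_0^1\bar\phi_\delta^2(v)\,dv\le c\,\delta$ and $\int_0^1(\bar\phi''_\delta(v))^2\,dv\le c\,\delta^{-3}$ that follow from \eqref{eq2017-11-26-4000} and the support of $\bar\phi_\delta$, and recalling $\Delta_\bullet=\delta^2$ from \eqref{eq2018-01-03-1}, the first term is bounded in $L^p$ by
\begin{align*}
c_p\Big(\Delta_\bullet \int_0^1\bar\phi_\delta^2(v)\,dv\Big)^{1/2} \le c_p\,(\delta^2\cdot\delta)^{1/2}=c_p\,\delta^{3/2},
\end{align*}
while for the second term, since $\big|\int_0^r\bar\psi(\bar Y_a)\,da\big|\le r$, Burkholder gives a bound by
\begin{align*}
c_p\Big(\int_0^{\Delta_\bullet} r^2\,dr\cdot\int_0^1(\bar\phi''_\delta(v))^2\,dv\Big)^{1/2} \le c_p\,(\delta^6\cdot\delta^{-3})^{1/2}=c_p\,\delta^{3/2}.
\end{align*}
Combining these via the triangle inequality yields \eqref{eq2018-01-04-10}. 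The constant is independent of $y_0\in J$ because the bounds on $\bar\phi_\delta$ and $\bar\phi''_\delta$ depend only on $\delta$, not on the location $y_0$.

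The estimate is sharp essentially by design: the two exponents arising from the bulk term $\bar\phi_\delta\bar\psi$ and the ``correction'' term $\bar\phi''_\delta\int\bar\psi$ match because the loss of two derivatives on $\bar\phi_\delta$ (costing $\delta^{-2}$ in the $L^\infty$ norm, hence $\delta^{-4}$ in $L^2$, partially offset by the $\delta$ from the support) is exactly compensated by the extra factor $r^2$ in the time integrand. There is no real obstacle beyond bookkeeping; the only point requiring care is verifying adaptedness to justify Proposition \ref{lemma2017-05-04-2}, which is essential since H\"older's inequality on Malliavin norms would give a much worse exponent.
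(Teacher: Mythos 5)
Your proposal is correct and follows essentially the same route as the paper: adaptedness plus Proposition \ref{lemma2017-05-04-2} to identify $\delta(u_{\bar A})$ with the Walsh integral of the two terms in \eqref{eq2018-10-09-2}, then Burkholder's inequality with $0\le\bar\psi\le1$, the support and derivative bounds on $\bar\phi_\delta$, and $\Delta_\bullet=\delta^2$, yielding $\delta^{3/2}$ for each term. The bookkeeping matches the paper's exactly.
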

\begin{proof}
The proof is similar to that of Lemma \ref{lemma05-04-1}.
Since $u_{\bar{A}}$ is adapted, by Proposition \ref{lemma2017-05-04-2} and \eqref{eq2018-10-09-2}, we have
\begin{align} \label{eq2017-05-04-300}
\delta(u_{\bar{A}}) &= \int_{0}^{\Delta_{\bullet}}\int_0^1\bar{\phi}_{\delta}(v)\bar{\psi}(\bar{Y}_r)W(dr, dv) - \int_{0}^{\Delta_{\bullet}}\int_0^1W(dr, dv)\bar{\phi}_{\delta}''(v)\int_{0}^r\bar{\psi}(\bar{Y}_a)da.
\end{align}
For the first term on the right-hand side of \eqref{eq2017-05-04-300}, by Burkholder's inequality, for any $p \geq 2$,  since $0 \leq \bar{\psi} \leq 1$,
\begin{align}\label{eq2017-05-04-400}
\left|\left|\int_{0}^{\Delta_{\bullet}}\int_0^1\bar{\phi}_{\delta}(v)\bar{\psi}(\bar{Y}_r)W(dr, dv)\right|\right|_{L^p(\Omega)}^p
&  \leq c_p\mbox{E}\left[\left(\int_{0}^{\Delta_{\bullet}}\int_0^1\bar{\phi}_{\delta}^2(v)\bar{\psi}^2(\bar{Y}_r)drdv\right)^{p/2}\right]\nonumber\\
 &  \leq c_p \Delta_{\bullet}^{p/2}\left(\int_0^1\bar{\phi}_{\delta}^2(v)dv\right)^{p/2}\nonumber\\
&  \leq c_p \Delta_{\bullet}^{p/2}\delta^{p/2} =  c_p \delta^{3p/2}.
\end{align}
For the second term on the right-hand side of \eqref{eq2017-05-04-300}, similarly, by Burkholder's inequality, for any $p \geq 2$, since $0 \leq \bar{\psi} \leq 1$,
\begin{align}\label{eq2017-05-04-500}
& \left|\left|\int_{0}^{\Delta_{\bullet}}\int_0^1W(dr, dv) \, \bar{\phi}_{\delta}''(v)\int_{0}^r\bar{\psi}(\bar{Y}_a)da\right|\right|_{L^p(\Omega)}^p \nonumber \\
  & \quad \leq c_p\mbox{E}\left[\left(\int_{0}^{\Delta_{\bullet}}dr\int_0^1dv \, (\bar{\phi}_{\delta}''(v))^2\left(\int_{0}^r\bar{\psi}(\bar{Y}_a)da\right)^2\right)^{p/2}\right]\nonumber\\
& \quad \leq c_p \left(\int_{0}^{\Delta_{\bullet}}r^2dr\right)^{p/2}\left(\int_0^1(\bar{\phi}_{\delta}''(v))^2dv\right)^{p/2}\nonumber\\
& \quad \leq c_p \Delta_{\bullet}^{3p/2}\left(\int_{y_0 - \delta}^{y_0 + 2\delta}\delta^{-4}dv\right)^{p/2}  =  c'_p \Delta_{\bullet}^{3p/2}\delta^{-3p/2} =  c'_p \delta^{3p/2},
\end{align}
where in the third inequality we use \eqref{eq2017-11-26-4000}.
Hence \eqref{eq2017-05-04-300}, \eqref{eq2017-05-04-400} and \eqref{eq2017-05-04-500} prove the lemma.
\end{proof}

\begin{lemma}
There exists a constant $c$, not depending on $y_0 \in  J$, such that for all $\delta_1, \, \delta_2 > 0$,
\begin{align}\label{eq2018-01-04-11}
\|u_{\bar{A}}\|_{\mathscr{H}} \leq c \, \delta^{3/2}.
\end{align}
\end{lemma}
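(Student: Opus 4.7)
The plan is to mirror the computation for $\|u_A^2\|_{\mathscr H}^2$ carried out in \eqref{eq2017-13}, adapted to the scaling appropriate to $u_{\bar A}$. Using the decomposition \eqref{eq2018-10-09-2} and the elementary inequality $(a-b)^2 \le 2(a^2+b^2)$, I would split
\begin{align*}
\|u_{\bar A}\|_{\mathscr H}^2
&\le 2\int_0^{\Delta_\bullet}\!dr\int_0^1\!dv\,\bar\psi(\bar Y_r)^2\bar\phi_\delta(v)^2
 + 2\int_0^{\Delta_\bullet}\!dr\int_0^1\!dv\,(\bar\phi_\delta''(v))^2\left(\int_0^r\bar\psi(\bar Y_a)\,da\right)^{\!2}.
\end{align*}
Since $0\le\bar\psi\le 1$ by \eqref{eq2017-10-08-100}, and since $\bar\phi_\delta$ defined in \eqref{eq2017-11-26-3000} is supported in an interval of length at most $3\delta$ with $\bar\phi_\delta$ bounded and $|\bar\phi_\delta''|\le c\delta^{-2}$ by \eqref{eq2017-11-26-4000}, each factor reduces to a deterministic scaling computation.

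For the first summand, $\int_0^1\bar\phi_\delta(v)^2\,dv\le c\,\delta$, so the first summand is bounded by $c\,\Delta_\bullet\,\delta=c\,\delta^3$ (using $\Delta_\bullet=\delta^2$ from \eqref{eq2018-01-03-1}). For the second summand, bound the inner time integral by $r$, then use $\int_0^{\Delta_\bullet}r^2\,dr=\Delta_\bullet^3/3$ and $\int_0^1(\bar\phi_\delta''(v))^2\,dv\le c\,\delta\cdot\delta^{-4}=c\,\delta^{-3}$ to obtain a bound $c\,\Delta_\bullet^3\,\delta^{-3}=c\,\delta^3$. Adding these yields $\|u_{\bar A}\|_{\mathscr H}^2\le c\,\delta^3$, and taking square roots gives \eqref{eq2018-01-04-11}.

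There is no real obstacle: the estimate is purely a matter of tracking how the support and derivatives of $\bar\phi_\delta$ scale in $\delta$ and of using that the cut-off $\bar\psi$ takes values in $[0,1]$. The only point to watch is that the exponent matches: the prefactor $\delta^{-3}$ coming from the two derivatives of $\bar\phi_\delta$ integrated over an interval of length $\lesssim\delta$ cancels precisely against $\Delta_\bullet^3=\delta^6$, exactly as $\delta_1^{-3/2}$ canceled $\delta_1^3$ in the proof of \eqref{eq2017-13}. The uniformity in $y_0\in J$ comes from the fact that the support of $\bar\phi_\delta$ stays inside $]0,1[$ thanks to the third condition in \eqref{eq2017-09-2600}, so the bounds on $\bar\phi_\delta$ and $\bar\phi_\delta''$ are indeed independent of $y_0$.
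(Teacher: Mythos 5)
Your proposal is correct and follows essentially the same computation as the paper's proof of \eqref{eq2018-01-04-11}: split via $(a-b)^2\le 2(a^2+b^2)$, use $0\le\bar\psi\le 1$, the support of $\bar\phi_\delta$ of length $O(\delta)$, the bound $|\bar\phi_\delta''|\le c\,\delta^{-2}$ from \eqref{eq2017-11-26-4000}, and $\Delta_\bullet=\delta^2$, yielding $\|u_{\bar A}\|_{\mathscr H}^2\le c\,\delta^3$. The scaling bookkeeping and the remark on uniformity in $y_0$ are both accurate.
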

\begin{proof}
The proof is similar to that of \eqref{eq2017-13}. By the definition of $u_{\bar{A}}$,
\begin{align}\label{eq2017-1300}
\|u_{\bar{A}}\|_{\mathscr{H}}^2 &\leq 2\int_{0}^{\Delta_{\bullet}}dr\int_0^1dv \, \bar{\psi}(\bar{Y}_r)^2\bar{\phi}_{\delta}^2(v) + 2\int_{0}^{\Delta_{\bullet}}dr\int_0^1dv \, (\bar{\phi}_{\delta}''(v))^2\left(\int_{0}^r\bar{\psi}(\bar{Y}_a)da\right)^2 \nonumber\\
&\leq 2\Delta_{\bullet} \int_{y_0 - \delta}^{y_0 + 2\delta}dv  + 2c\int_{0}^{\Delta_{\bullet}}r^2dr\int_{y_0 - \delta}^{y_0 + 2\delta}\delta^{-4}dv \nonumber\\
&= c \, \delta^{3} + c \, \Delta_{\bullet}^{3}\delta^{-3}  = 2c \, \delta^{3},
\end{align}
where, in the second inequality, we use \eqref{eq2017-11-26-4000}.
\end{proof}

\begin{lemma} \label{lemma2018-01-02-8}
For any $p \geq 2$, there exists a constant $c_p$, not depending on $y_0 \in  J$, such that for all $\delta_1, \, \delta_2 > 0$,
\begin{align} \label{eq2018-01-04-12}
\|\langle D\gamma_{\bar{A}}, u_{\bar{A}} \rangle_{\mathscr{H}}\|_{L^p(\Omega)} \leq c \, \delta^{7/2}.
\end{align}
\end{lemma}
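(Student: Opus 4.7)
The plan is to follow the same template as Lemma \ref{lemma2017-08-29-1}, replacing the role of the time interval $[s_0, s_0 + \delta_1]$ with $[0, \Delta_{\bullet}]$, and the estimates on $(Y_r, u_A^2)$ with those on $(\bar{Y}_r, u_{\bar{A}})$. The computation is essentially algebraic once the right ingredients are assembled, so most of the work is bookkeeping with the exponents in $\delta = \delta_1^{1/2} + \delta_2$.

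First I would differentiate $\gamma_{\bar{A}} = \int_0^{\Delta_{\bullet}}\bar{\psi}(\bar{Y}_r)dr$ by exchanging $D$ and the Lebesgue integral (justified as in Lemma \ref{lemma13}), obtaining
\[
\langle D\gamma_{\bar{A}}, u_{\bar{A}}\rangle_{\mathscr{H}} \;=\; \int_0^{\Delta_{\bullet}} \bar{\psi}'(\bar{Y}_r)\,\langle D\bar{Y}_r, u_{\bar{A}}\rangle_{\mathscr{H}}\,dr.
\]
Then I would apply H\"older's inequality in the variable $r$, Cauchy--Schwarz in $\mathscr{H}$, and bring the factor $\|u_{\bar{A}}\|_{\mathscr{H}}$ out (it is deterministic from the viewpoint of the $\mathscr{H}$-norm bound only if we use the almost sure estimate~\eqref{eq2018-01-04-11}; in any case it is bounded by $c\,\delta^{3/2}$), to get
\[
\mathrm{E}\bigl[|\langle D\gamma_{\bar{A}}, u_{\bar{A}}\rangle_{\mathscr{H}}|^p\bigr] \;\leq\; \|\bar{\psi}'\|_\infty^{\,p}\, \Delta_{\bullet}^{p-1}\,\|u_{\bar{A}}\|_{\mathscr{H}}^{p}\, \int_0^{\Delta_{\bullet}} \mathrm{E}\bigl[\|D\bar{Y}_r\|_{\mathscr{H}}^{p}\bigr]\,dr.
\]

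Next I would plug in the four key estimates: $\|\bar{\psi}'\|_\infty \leq c\,\bar{R}^{-1}$ from \eqref{eq2017-11-26-100}, $\|u_{\bar{A}}\|_{\mathscr{H}} \leq c\,\delta^{3/2}$ from \eqref{eq2018-01-04-11}, $\mathrm{E}[\|D\bar{Y}_r\|_{\mathscr{H}}^p] \leq c_p\, r^{2p}\,\delta^{(p_0-\gamma_0)p}$ from \eqref{eq2018-01-03-10}, and $\Delta_{\bullet}=\delta^2$, which gives
\[
\mathrm{E}\bigl[|\langle D\gamma_{\bar{A}}, u_{\bar{A}}\rangle_{\mathscr{H}}|^p\bigr] \;\leq\; c_p\, \bar{R}^{-p}\, \delta^{2(p-1)}\, \delta^{3p/2}\, \delta^{(p_0-\gamma_0)p}\, \delta^{2(2p+1)}\;=\; c_p\, \bar{R}^{-p}\, \delta^{(p_0-\gamma_0)p + 15p/2}.
\]
Finally, using \eqref{eq2017-12-1-30011}, namely $\bar{R}^{-1} \leq c\,\delta^{\gamma_0-p_0-4}$ under the standing assumption $z \geq \delta^{1/2}$, the exponents collapse to $\delta^{(\gamma_0-p_0-4)p + (p_0-\gamma_0)p + 15p/2} = \delta^{7p/2}$, yielding \eqref{eq2018-01-04-12}.

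There is no real obstacle; the only point requiring care is making sure the constants $c_p$ are independent of $y_0 \in J$, which is inherited from the corresponding uniformity in Lemma \ref{lemma2017-10-2-1}(b) and in the estimate \eqref{eq2018-01-04-11}, together with the fact that $\bar{R}$ depends only on $z$ and $\delta$ and not on $y_0$.
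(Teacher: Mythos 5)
Your proposal is correct and follows essentially the same route as the paper: differentiate $\gamma_{\bar{A}}$ under the integral, apply H\"older in $r$ and Cauchy--Schwarz in $\mathscr{H}$, pull out the almost sure bound $\|u_{\bar{A}}\|_{\mathscr{H}} \leq c\,\delta^{3/2}$, invoke \eqref{eq2018-01-03-10} and \eqref{eq2017-11-26-100}, and conclude with $\bar{R}^{-1}\leq c\,\delta^{\gamma_0 - p_0 - 4}$ from \eqref{eq2017-12-1-30011}. The exponent bookkeeping ($15p/2 - 4p = 7p/2$) matches the paper's computation exactly.
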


\begin{proof}
The proof is similar to that of Lemma \ref{lemma2017-08-29-1}.
Taking the Malliavin derivative of $\gamma_{\bar{A}}$, we have
\begin{align*}
\langle D\gamma_{\bar{A}}, u_{\bar{A}} \rangle_{\mathscr{H}} = \int_{0}^{\Delta_{\bullet}}\bar{\psi}'(\bar{Y}_r)\langle D\bar{Y}_r, u_{\bar{A}} \rangle_{\mathscr{H}}dr.
\end{align*}
By H\"{o}lder's inequality,  \eqref{eq2017-11-26-100} and \eqref{eq2017-1300}, for any $p \geq 1$,
\begin{align*}
\mbox{E}\left[|\langle D\gamma_{\bar{A}}, u_{\bar{A}} \rangle_{\mathscr{H}}|^p\right] &\leq \|\bar{\psi}'\|^p_{\infty}\Delta_{\bullet}^{p - 1}\int_{0}^{\Delta_{\bullet}}\mbox{E}[|\langle D\bar{Y}_r, u_{\bar{A}} \rangle_{\mathscr{H}}|^p]dr \nonumber \\
&\leq c_p\bar{R}^{-p}\Delta_{\bullet}^{p -1}\int_{0}^{\Delta_{\bullet}}\mbox{E}[\|D\bar{Y}_r\|^p_{\mathscr{H}}\|u_{\bar{A}}\|_{\mathscr{H}}^p]dr \nonumber \\
&\leq  c_p\bar{R}^{-p}\Delta_{\bullet}^{p - 1 + 3p/4}\int_{0}^{\Delta_{\bullet}}\mbox{E}[\|D\bar{Y}_r\|^p_{\mathscr{H}}]dr. \nonumber
\end{align*}
Applying \eqref{eq2018-01-03-10}, this is bounded above by
\begin{align*}
& c_p\bar{R}^{-p}\Delta_{\bullet}^{p - 1 + 3p/4}\delta^{(p_0 - \gamma_0)q}\int_{0}^{\Delta_{\bullet}}r^{2p}dr \nonumber \\
&\quad = c_p\bar{R}^{-p}\Delta_{\bullet}^{p - 1 + 3p/4}\delta^{(p_0 - \gamma_0)p}\Delta_{\bullet}^{2p + 1}\leq c'_p \delta^{(\gamma_0 - p_0 - 4)p}\Delta_{\bullet}^{p - 1 + 3p/4}\delta^{(p_0 - \gamma_0)p}\Delta_{\bullet}^{2p + 1} = c'_p \delta^{7p/2},
\end{align*}
where, in the inequality, we use \eqref{eq2017-12-1-30011}.
\end{proof}

\begin{proof}[Proof of Proposition \ref{prop0511-100}(a)]
Using the property of Skorohod integral $\delta$ (see \cite[(1.48)]{Nua06}),
\begin{align}\label{eq2018-01-04-5}
\delta(u_{\bar{A}}/\gamma_{\bar{A}}) &= \frac{\delta(u_{\bar{A}})}{\gamma_{\bar{A}}} + \frac{\langle D\gamma_{\bar{A}}, u_{\bar{A}} \rangle_{\mathscr{H}}}{\gamma_{\bar{A}}^2}  := I_1 + I_2.
\end{align}
By Lemmas \ref{lemma2018-01-02-6} and \ref{lemma2017-05-12-1}(b),
\begin{align}\label{eq2018-01-04-6}
\|I_1\|_{L^2(\Omega)} \leq c \, \delta^{3/2} \delta^{-2} = c \, \delta^{-1/2}.
\end{align}
By Lemmas \ref{lemma2018-01-02-8} and \ref{lemma2017-05-12-1}(b),
\begin{align}\label{eq2018-01-04-7}
\|I_2\|_{L^2(\Omega)} \leq c \, \delta^{7/2} \delta^{-4} = c \, \delta^{-1/2}.
\end{align}
Therefore, \eqref{eq2018-01-04-5}, \eqref{eq2018-01-04-6} and \eqref{eq2018-01-04-7} establish \eqref{eq2018-01-02-17}.
\end{proof}

\subsection{Proof of Proposition \ref{prop0511-100}(b)}
\begin{proof}[Proof of Proposition \ref{prop0511-100}(b)]
The proof is similar to that of \eqref{eq2016-12-27-20000}.
We denote
\begin{align*}
\sigma_0^2 := \sup\limits_{(t, x) \in [0, \delta_1] \times [y_0, y_0 + \delta_2]}\mbox{E}[u(t, x)^2].
 \end{align*}
 From \eqref{eq2016-06-14-1}, we have $\sigma_0^2 \leq C \, \delta_1^{1/2}\leq C \, (\delta_1^{1/2} + \delta_2)$. On the other hand, by \cite[(4.50)]{DKN07}, we have
\begin{align}\label{eq2017-09-150011}
\mbox{E}[M_0] &\leq \mbox{E}\left[\sup\limits_{(t, x) \in [0, \delta_1] \times [y_0, y_0 + \delta_2]}|u(t, x)|\right] \nonumber\\
&\leq \mbox{E}\left[\sup\limits_{[t^{1/2} + |x - y_0|]^{1/2}\leq (\delta_1^{1/2} + \delta_2)^{1/2}}|u(t, x)|\right] \leq c \, (\delta_1^{1/2} + \delta_2)^{1/2}.
\end{align}
Applying Borell's inequality (see \cite[(2.6)]{Adl90}), as in \eqref{eq2018-12-11-1}, we see that for all $z > c\,  (\delta_1^{1/2} + \delta_2)^{1/2}$ (here  $c$ is the constant in \eqref{eq2017-09-150011}),
\begin{align}
\mbox{P}\{M_0 > z\} &\leq 2 \exp\left(-(z - \mbox{E}[M_0])^2/(2\sigma_0^2)\right)\leq \bar{c}\exp\left(-z^2/(3C(\delta_1^{1/2} + \delta_2))\right).
\end{align}
Since for $0 \leq z \leq c\,  (\delta_1^{1/2} + \delta_2)^{1/2}$,
$
\exp(-z^2/(3C(\delta_1^{1/2} + \delta_2))) \geq e^{-c^2/(3C)},
$
we can find a constant $\tilde{c}$ such that for all $z > 0$,
\begin{align*}
\mbox{P}\{M_0 > z\} \leq \tilde{c}\exp(-z^2/(3C(\delta_1^{1/2} + \delta_2))).
\end{align*}
This proves \eqref{eq2018-01-4-3}.
\end{proof}

\begin{remark}
The results of Theorems \ref{the2017-11-25-1}, \ref{theorem100}, and \ref{theorem2018-01-03-1} also hold for the solution of \eqref{eq2017-10-11-62} without boundary ($x \in \mathbb{R}$). This is because in the definition of the random variables $H$ and $\bar{H}$ in (\ref{eq2017-05-31-10000}) and \eqref{eq2017-05-31-1000011}, the functions $\phi_{\delta_1}$  and $\bar{\phi}_{\delta}$ are compactly supported and $C^{\infty}$ and the boundary conditions do not affect the smoothness of $u_A^i, \gamma_A^{i, j}, i, j \in \{1, 2\}$ and $u_{\bar{A}}, \, \gamma_{\bar{A}}$ in Lemma \ref{lemma2017-1}. And the equalities (\ref{eq2017-08-28-1}), (\ref{eq2017-08-28-2}), (\ref{eq2017-08-28-3}), (\ref{eq2017-08-28-4}) and \eqref{eq2018-01-02-13} in the proof of Lemma \ref{lemma2017-2} still hold with $[0, 1]$ replaced by $\mathbb{R}$. Furthermore, the formulas and estimates in (\ref{eq2017-2}), \eqref{eq2018-01-02-15}, (\ref{eq2017-09-07-10}), (\ref{eq2017-09-07-13}), (\ref{eq2017-08-28-10}), (\ref{eq2017-09-07-14}) and \eqref{eq2018-01-04-5} are generic, no matter with or without boundary conditions.
Moreover, the boundary conditions do not change the estimates in (\ref{eq2017-6}), (\ref{eq2017-10}), (\ref{eq2017-13}), (\ref{eq2017-05-11-3}), (\ref{eq2017-05-11-4}), \eqref{eq2018-01-04-10}, \eqref{eq2018-01-04-11} and \eqref{eq2018-01-04-12}. Furthermore, the equalities  (\ref{eq2017-08-31-4}), (\ref{eq2017-05-11-6}), (\ref{eq2017-05-11-8}), (\ref{eq2017-05-11-100}) and  (\ref{eq2017-05-11-9}) remain the same. In the end, the estimates (\ref{eq2017-09-15}) and \eqref{eq2017-09-150011} still hold for the solution of the equation without boundary (we can redo the proof of \cite[(4.50)]{DKN07} line by line).
\end{remark}

\begin{small}

\vspace{1.5cm}

\noindent\textbf{Robert C. Dalang} and \textbf{Fei Pu.}
Institut de Math\'ematiques, Ecole Polytechnique
F\'ed\'erale de Lausanne, Station 8, CH-1015 Lausanne,
Switzerland.\\
Emails: \texttt{robert.dalang@epfl.ch} and \texttt{fei.pu@epfl.ch}\\
\end{small}
\end{document}